\documentclass{article}

\usepackage[a4paper, margin=3cm]{geometry}
\usepackage{amsmath,amssymb,amsthm}
\usepackage{mathtools}
\usepackage[backend=biber, style=numeric]{biblatex}
\usepackage{hyperref}
\usepackage{graphicx}
\usepackage{tikz-cd}
\usepackage{tikz}
\usepackage{mathrsfs}
\usepackage{xurl}

\theoremstyle{definition} 

\newcommand{\ch}{\operatorname{ch}}

\newcommand{\tr}{\operatorname{Tr}}

\newcommand{\Gr}{\operatorname{Gr}}
\newcommand{\End}{\operatorname{End}}
\newcommand{\Res}{\operatorname{Res}}
\newcommand{\Hom}{\operatorname{Hom}}

\usepackage{enumitem}
\usepackage{mathrsfs}

\theoremstyle{definition}
\newtheorem{theorem}{Theorem}[section]
\newtheorem{proposition}[theorem]{Proposition}
\newtheorem{lemma}[theorem]{Lemma}

\newtheorem{remark}[theorem]{Remark}
\newtheoremstyle{italicdefinition}
  {3pt}
  {3pt}
  {\normalfont\itshape}
  {}
  {\bfseries}
  {.}
  {.5em}
  {}
\theoremstyle{italicdefinition}
\newtheorem{definition}[theorem]{Definition}
\newtheorem{convention}[theorem]{Convention}
\theoremstyle{definition}

\providecommand{\End}{\operatorname{End}}
\renewcommand{\Hom}{\operatorname{Hom}}
\renewcommand{\tr}{\operatorname{tr}}
\providecommand{\rk}{\operatorname{rk}}
\providecommand{\im}{\operatorname{Im}}

\providecommand{\Res}{\operatorname{Res}}
\providecommand{\Ad}{\operatorname{Ad}}
\providecommand{\ad}{\operatorname{ad}}

\renewcommand{\Gr}{\operatorname{Gr}}

\providecommand{\Aut}{\operatorname{Aut}}

\providecommand{\Id}{\operatorname{Id}}

\providecommand{\Ric}{\operatorname{Ric}}
\providecommand{\PU}{\mathrm{PU}}

\providecommand{\PGL}{\mathrm{PGL}}
\providecommand{\GL}{\mathrm{GL}}
\providecommand{\Lat}{\mathsf{Lat}}
\providecommand{\BG}{\mathsf{BG}}

\newcommand{\C}{\mathbb C}
\newcommand{\R}{\mathbb R}

\newcommand{\Z}{\mathbb Z}
\newcommand{\OX}{\mathcal O_X}
\newcommand{\cX}{\mathcal X}
\newcommand{\cD}{\mathcal D}
\newcommand{\cE}{\mathcal E}
\newcommand{\bB}{\mathbb B}

\renewcommand{\ch}{\operatorname{ch}}
\newcommand{\parch}{\operatorname{par\!-ch}}
\newcommand{\parc}{\operatorname{par\!-c}}
\newcommand{\pardeg}{\operatorname{par\!-deg}}
\newcommand{\orbch}{\operatorname{orb\!-ch}}

\title{Orbifold Uniformization of Complex Algebraic Varieties via Polystable Parabolic Higgs Bundles}

\author{
  Tianshu Jiang\thanks{E-mail: jts2021@mail.ustc.edu.cn. School of Mathematical Sciences, University of Science and Technology of China, Hefei 230026, P. R. China.}
  \and
  Jiayu Li\thanks{E-mail: jiayuli@ustc.edu.cn. School of Mathematical Sciences, University of Science and Technology of China, Hefei 230026, P. R. China.}
}

\date{\today}

\begin{document}

\frenchspacing

\maketitle

\tableofcontents

\begin{abstract}
Let \(X\) be a smooth complex projective variety of dimension \(n\geq 2\), and let
\[
   D=D^p+D^c,\qquad D^c=\sum_{i\in I}D_i^c,
\]
be a simple normal crossing divisor. We regard \(D^p\) as the cusp divisor and the components \(D_i^c\) as compact orbifold divisors with standard weights
\[
   \alpha_i=1-\frac1{p_i}=\frac{p_i-1}{p_i}.
\]
Let
\[
   \cX=X\bigl[\sqrt[p_i]{D_i^c}\bigr]_{i\in I}
\]
be the root stack along the compact components. We study the canonical parabolic Higgs bundle
\[
   E_*=\bigl(\Omega_X^1(\log D^p)\oplus\mathcal O_X\bigr)_*,
\]
whose only non-trivial compact weights are the weights \(\alpha_i\) on the conormal lines of the \(D_i^c\), while the parabolic structure along \(D^p\) is trivial. Equivalently, this is the canonical log-root Higgs bundle on the log-root pair \((\cX,D^p)\), where \(D^p\) is kept as an ordinary logarithmic boundary.

Assume that \((E_*,\theta)\) is polystable with respect to some ample line bundle and that equality holds in the parabolic Bogomolov--Gieseker inequality. We prove that the trace-free adjoint Higgs bundle is flat. The associated principal \(\PU(n,1)\)-variation gives a faithful monodromy representation
\[
   \rho:\pi_1^{\rm orb}(\cX^o)\longrightarrow \PU(n,1)
\]
and a period map to the complex ball \(\bB^n\). For these standard weights, this period map is unramified in the orbifold sense and identifies the log-root pair with the canonical orbifold toroidal compactification of a finite-volume complex ball quotient:
\[
   (\cX,D^p)\simeq (\mathcal T_\Gamma,D_\Gamma^{\rm tor})
\]
in the sense of Definition~\ref{def:canonical-orbifold-toroidal}, where
\[
   \Gamma=\rho\bigl(\pi_1^{\rm orb}(\cX^o)\bigr)\subset \PU(n,1)
\]
is a finite-volume lattice satisfying the regular log-root condition of Definition~\ref{def:regular-lattice-category}.

We formulate this standard-weight uniformization as an equivalence of categories. Let \(\Lat_n^{\rm reg}\) be the regular log-root lattice category of Definition~\ref{def:regular-lattice-category}, and let \(\BG_n^{\rm std}\) be the standard Bogomolov--Gieseker category of Definition~\ref{def:BGcategory}. The compactified quotient construction and the monodromy construction from the canonical parabolic Higgs bundle define quasi-inverse functors
\[
   \Lat_n^{\rm reg}\simeq \BG_n^{\rm std}.
\]
\end{abstract}

\noindent\textbf{2020 Mathematics Subject Classification.}
Primary 32Q30; Secondary 14A20, 14D07, 14J60, 32G20, 32M15, 53C07.

\medskip
\noindent\textbf{Keywords.}
Parabolic Higgs bundles, root stacks, complex hyperbolic geometry, ball quotients, Bogomolov--Gieseker equality.

\section{Introduction}

\subsection{Equality and converse statements}

Simpson's compact ball-uniformization theorem \cite{Simpson1988} is the starting point. In the compact case one considers the system of Hodge bundles
\[
   \bigl(\Omega_X^1\oplus\mathcal O_X,\theta\bigr),\qquad
   \theta(a,b)=(0,a),
\]
and the equality case of the Bogomolov--Gieseker inequality forces the corresponding trace-free adjoint Higgs bundle to be flat. The resulting principal \(\PU(n,1)\)-variation has a period map to \(\mathbb B^n\). We prove a log-root version of this picture, with logarithmic cusp boundary, compact orbifold divisors, and rational parabolic weights. The standard-weight case will be singled out after the parabolic Higgs bundle has been defined.

Let \(X\) be a smooth complex projective variety and let
\[
   D=D^p+D^c,\qquad D^c=\sum_{i\in I}D_i^c,
\]
be a simple normal crossing divisor. The divisor \(D^p\) is a cusp divisor and is removed from the open part. The compact divisor \(D^c\) is not removed; instead its components carry finite root-stack stabilizers. We set
\[
   \cX=X\bigl[\sqrt[p_i]{D_i^c}\bigr]_{i\in I},\qquad
   \pi:\cX\to X,
\]
and
\[
   \cX^o:=\cX\setminus \pi^{-1}(D^p).
\]
The cusp divisor \(D^p\) is treated as an ordinary logarithmic boundary. Thus the geometric object is the log-root pair \((\cX,D^p)\).

Choose rational compact weights
\[
   \alpha_i=\frac{q_i}{p_i}\in[0,1),\qquad (p_i,q_i)=1.
\]
We consider
\[
   E=\Omega_X^1(\log D^p)\oplus\mathcal O_X,
\]
with the tautological Higgs field
\[
   \theta(a,b)=(0,a),
\]
taking values in \(\Omega_X^1(\log(D^p+D^c))\). The associated parabolic Higgs bundle is denoted by \((E_*,\theta)\). Its only non-trivial compact weights are the weights \(\alpha_i\) on the conormal lines of the divisors \(D_i^c\), and the parabolic structure along \(D^p\) is trivial. The standard weights are the special case
\[
   \alpha_i=1-\frac1{p_i},
\]
equivalently \(q_i=p_i-1\).

The proof of the equality direction proceeds in three steps. First, Mochizuki's correspondence for parabolic Higgs bundles \cite[Theorem~1.4]{Mochizuki2006} gives an adapted harmonic metric on the trace-free adjoint Higgs bundle, and the parabolic Bogomolov--Gieseker equality forces the Hitchin--Simpson curvature of this adjoint bundle to vanish. Second, a principal reconstruction step upgrades the flat adjoint system to a flat principal \(\PU(n,1)\)-variation. The resulting period map is controlled by the Kodaira--Spencer morphism; in the standard-weight case this morphism is an isomorphism on the root stack, so the period map is unramified in the orbifold sense. Finally, cusp metric estimates based on Mochizuki's asymptotic analysis \cite{Mochizuki2002}, together with the metric rigidity theorem of Deng--Cadorel \cite[Theorem~A.7]{DengCadorel2022}, identify the compactification with the canonical orbifold toroidal compactification of a ball quotient.

The main standard-weight statement is the following.

\begin{theorem}\label{thm:standard-uniformization}
Let \((\cX,D^p)\) be as above, with standard weights \(\alpha_i=1-1/p_i\) along the compact components \(D_i^c\). Assume that the canonical parabolic Higgs bundle \((E_*,\theta)\) is polystable with respect to some ample line bundle \(L\), and that equality holds in the parabolic Bogomolov--Gieseker inequality
\[
   \left(\frac{\parch_1(E_*)^2}{2(n+1)}-\parch_2(E_*)\right)c_1(L)^{n-2}=0.
\]
Then the trace-free adjoint Higgs bundle is flat and gives a faithful representation
\[
   \rho:\pi_1^{\rm orb}(\cX^o)\longrightarrow \PU(n,1).
\]
The associated period map identifies the orbifold universal cover of \(\cX^o\) with \(\bB^n\). Hence
\[
   \Gamma:=\rho\bigl(\pi_1^{\rm orb}(\cX^o)\bigr)\subset \PU(n,1)
\]
is a finite-volume lattice, and
\[
   (\cX,D^p)\simeq (\mathcal T_\Gamma,D_\Gamma^{\rm tor})
\]
as canonical orbifold toroidal compactifications in the sense of Definition~\ref{def:canonical-orbifold-toroidal}. In particular, \(D^p\) is smooth and each connected component of \(D^p\) is a finite quotient of an abelian variety.
\end{theorem}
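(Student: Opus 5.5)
The proof follows the three-step outline of the introduction, carried out on the root stack \(\cX\) so that the compact weights disappear. First, pull back along \(\pi:\cX\to X\). The standard parabolic structure along \(D_i^c\) with weight \(\alpha_i=1-1/p_i\) on the conormal line corresponds to an honest vector bundle on \(\cX\), namely \(\Omega^1_{\cX}(\log \pi^{-1}D^p)\oplus\mathcal O_{\cX}\). The Higgs field \(\theta\) then becomes holomorphic along the orbifold divisors, with logarithmic poles only along \(D^p\). Indeed, locally \(z=w^{p_i}\), and \(dz/z=p_i\,dw/w\) twisted by the weight gives \(dw\), which is the precise reason for the standard weights. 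Parabolic polystability and the parabolic Chern characters \(\parch_1,\parch_2\) then become ordinary polystability and orbifold Chern characters on \((\cX,D^p)\) with trivial parabolic structure along the cusps.

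Next, apply Mochizuki's correspondence \cite[Theorem~1.4]{Mochizuki2006} to get a tame pluriharmonic metric \(h\) adapted to the parabolic structure. Pass to the trace-free adjoint bundle \(\End^0(E)\). By Mochizuki's Chern–Weil formula, the BG discriminant computed in the statement equals \(\int |F^\perp_{h}|^2\wedge c_1(L)^{n-2}\) up to a positive constant, where \(F^\perp_h\) is the trace-free part of the Hitchin–Simpson curvature. Equality therefore forces \(\End^0(E)\) to be flat.

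Since \(E=\Omega^1\oplus\mathcal O\) is a system of Hodge bundles (the \(\C^*\)-action \(\theta\mapsto t\theta\) is realized by automorphisms), uniqueness of the harmonic metric makes \(h\) split orthogonally. The flat connection then has structure group reducing to \(\PU(n,1)\), yielding a principal \(\PU(n,1)\)-variation of Hodge structure. Its monodromy is \(\rho:\pi_1^{\rm orb}(\cX^o)\to\PU(n,1)\), and its period map \(\tilde\cX^o\to \PU(n,1)/U(n)=\bB^n\) is defined on the orbifold universal cover. The differential of the period map is the Kodaira–Spencer map, which here is \(\theta:T_{\cX}\to\Hom(\mathcal O,\Omega^1)\), an isomorphism on \(\cX^o\). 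Hence the period map is a local biholomorphism, and the pulled-back Bergman metric equals (up to a constant) the Hodge metric \(g\) induced by \(h\).

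The main obstacle is showing that this local biholomorphism is a covering, and controlling the cusps. I would use Mochizuki's asymptotics \cite{Mochizuki2002} of tame harmonic metrics with trivial parabolic weights and nilpotent residues along \(D^p\). These should show that \(g\) is comparable to a Poincaré-type metric near \(D^p\), and hence complete on \(\cX^o\) with finite volume. A local isometry from a complete Kähler manifold of constant holomorphic sectional curvature onto \(\bB^n\) is a covering, hence a biholomorphism \(\tilde\cX^o\cong\bB^n\). This gives faithfulness of \(\rho\) and shows \(\Gamma\) is discrete with \(\operatorname{vol}(\bB^n/\Gamma)=\operatorname{vol}_g(\cX^o)<\infty\), so \(\Gamma\) is a lattice.

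The finite-order stabilizers coming from the root structure are exactly the elliptic elements of \(\Gamma\) fixing the images of the \(D_i^c\). The regular log-root condition should be checked by verifying that the cusp monodromies are unipotent, which holds because the parabolic weights along \(D^p\) are trivial. Finally, I would invoke Deng–Cadorel \cite[Theorem~A.7]{DengCadorel2022}. The metric rigidity it provides, together with the bounded-geometry estimates near \(D^p\), identifies the log compactification \((\cX,D^p)\) of \(\bB^n/\Gamma\) with the canonical toroidal one \((\mathcal T_\Gamma,D^{\rm tor}_\Gamma)\) of Definition~\ref{def:canonical-orbifold-toroidal}. The last assertion then follows from the known structure of toroidal boundaries: they are smooth, and each component is a finite quotient of an abelian variety.
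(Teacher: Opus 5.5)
Your outline follows the paper's route: root stack, Mochizuki's correspondence, flat trace-free adjoint bundle, principal $\PU(n,1)$-variation, Kodaira--Spencer isomorphism for standard weights, completeness and finite volume, complete local isometry, Deng--Cadorel. Two steps do not go through as written.

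\textbf{Completeness near $D^p$.} The hypothesis only makes $D^p+D^c$ simple normal crossing, so a priori two cusp components $P_1,P_2$ may meet. The Poincar\'e comparison you have in mind is the one-component model, and it is not available at a point of $P_1\cap P_2$. A path running into $P_1\cap P_2$ is not controlled by estimates at generic points of $P_1$ or $P_2$, because those estimates are not uniform as one approaches the intersection. (Finite volume there would still follow from the Schwarz lemma; the lower bound needed for completeness would not.)

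You obtain smoothness of $D^p$ only at the end, from the structure of toroidal boundaries. This is circular: the toroidal identification rests on the covering argument, which rests on completeness at every point of $D^p$. The missing idea is Proposition~\ref{prop:Dp-smooth}: the equality case excludes crossings before any metric analysis. Along $D^p$ the bundle $\End_0(\cE)$ is tame nilpotent with trivial parabolic structure. So Mochizuki's constancy theorem (Theorem~\ref{thm:mochizuki-cone}) forces the weight filtration of $L_t=\ad(N_1+tN_2)$, with $N_k=\Res_{P_k}\theta$, to be independent of $t>0$. But $(N_1+tN_2)^2=0$ gives $L_t^3=0$, hence $W_{-2}(L_t)=\im L_t^2=\C\cdot(N_1+tN_2)$, which moves with $t$. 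Only after this does the single-variable model of Lemma~\ref{lem:model-metric} give completeness.

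\textbf{The appeal to Deng--Cadorel.} Their Theorem~A.7 concerns $\bB^n/\Lambda$ with $\Lambda$ torsion-free and a klt compactification. Your $\Gamma$ has torsion, at least the elliptic elements coming from $D^c$, so the theorem cannot be applied to $(\cX,D^p)$ directly. You need the reduction of Proposition~\ref{prop:standard-toroidal-identification}:
\begin{enumerate}
\item Take a neat normal subgroup $\Gamma'\triangleleft\Gamma$ of finite index, and let $Y'$ be the normalization of $X$ in $\C(\bB^n/\Gamma')$.
\item Show, using root charts along $D^c$ and Abhyankar's lemma along $D^p$, that $(Y',E')$ has quotient singularities.
\item Check the log-order condition at generic points of $E'$; the Poincar\'e factor is invariant under $z=t^m$.
\item Conclude $(Y',E')\simeq(\overline U^{\rm tor}_{\Gamma'},D^{\rm tor}_{\Gamma'})$.
\item Descend by $F=\Gamma/\Gamma'$, using that the finite birational map $Y'/F\to X$ of normal varieties is an isomorphism.
\end{enumerate}
Relatedly, unipotence of the loop around $D^p$ does not give the regular log-root local form. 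That form concerns the whole local $F$-action near the boundary, and it is inherited from the smoothness of $X$ and the root-stack structure.

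\textbf{Minor points.} $E$ itself has non-trivial characteristic numbers, so Mochizuki's theorem yields a pluriharmonic metric on $\End_0(\cE)$, not on $E$; this uses the equality to kill $\orbch_2(\End_0\cE)\,c_1(L)^{n-2}$. Also, the Kodaira--Spencer map takes values in $\Hom(\cE^{1,0},\cE^{0,1})$, not in $\Hom(\mathcal O,\Omega^1)$.
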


The proof actually gives a more general branched statement for arbitrary compact weights. We record it separately, because Theorem~\ref{thm:standard-uniformization} is the unramified case which enters the categorical equivalence.

\begin{proposition}\label{prop:branched-equality}
Let \(D_i^c\) carry a rational weight \(q_i/p_i\), with \(0\le q_i<p_i\) and \((p_i,q_i)=1\). Assume that \((E_*,\theta)\) is \(L\)-polystable and that equality holds in the parabolic Bogomolov--Gieseker inequality
\begin{equation}\label{eq:BG}
\left(\frac{\parch_1(E_*)^2}{2(n+1)}-\parch_2(E_*)\right)c_1(L)^{n-2}=0.
\end{equation}
Then the following statements hold.

\begin{enumerate}[label=\textup{(\arabic*)}]
\item The trace-free endomorphism Higgs bundle
\[
\bigl(\End_0(\cE),\theta_{\End_0}\bigr)
\]
is flat. It carries an adapted tame harmonic metric and is induced by a flat principal \(\PU(n,1)\)-bundle. Thus one obtains a representation
\[
\rho:\pi_1^{\rm orb}(\cX^o)\longrightarrow \PU(n,1)
\]
and a \(\rho\)-equivariant surjective holomorphic period map
\[
\mathcal{P}:\widetilde{\cX^o}\longrightarrow \PU(n,1)/U(n)\simeq \bB^n.
\]
Moreover, the orbifold universal cover \(\widetilde{\cX^o}\) is represented by an ordinary simply connected complex manifold; equivalently, all stabilizer groups on the covering orbifold are trivial.

\item The differential of \(\mathcal{P}\) is the Kodaira--Spencer morphism
\[
\tau:T_{\cX}(-\log D^p)\longrightarrow \Hom(\cE^{1,0},\cE^{0,1}).
\]
It is generically an isomorphism. Near the generic point of \(\cD_i^c\), choose the root coordinate
\[
z_i=w_i^{p_i}.
\]
Then the normal component of \(\mathcal{P}\) has local form
\[
\mathcal{P}_i(w)=u_i(w)w_i^{p_i-q_i},\qquad u_i(0)\ne 0.
\]
Thus, on the smooth root chart of the universal cover, \(\mathcal{P}\) is ramified along the lift of \(\cD_i^c\) with ramification index \(p_i-q_i\). Equivalently, if \(\widetilde{\cD_i^c}\) denotes the inverse image of \(\cD_i^c\), then the ramification divisor is
\[
R_{\mathcal{P}}=\sum_{i\in I}(p_i-q_i-1)\,\widetilde{\cD_i^c}.
\]
If the equivariant period map descends to a quotient by a discrete holonomy group, the induced coarse map has the same branch index along \(D_i^c\).

\item The period map is a local biholomorphism in the orbifold sense if and only if \(q_i=p_i-1\) for every \(i\). In this special case it is an unramified orbifold ball uniformization. In general it is a branched period map, not an unramified orbifold uniformization.

\item The cusp divisor \(D^p\) is forced to be smooth: its irreducible components are pairwise disjoint.

\item Around \(D^p\), the local monodromy is parabolic in \(\PU(n,1)\). Around \(D_i^c\), the local orbifold stabilizer has order \(p_i\), and its image is elliptic of order \(p_i\) in the normal period coordinate. More generally, at a point lying on the compact components \(\cD_i^c\), \(i\in S\), the local stabilizer \(\prod_{i\in S}\mu_{p_i}\) maps injectively to \(\PU(n,1)\); on the corresponding normal period lines its action is given by
\[
   \xi_i\longmapsto \zeta_i^{p_i-q_i}\xi_i.
\]
\end{enumerate}
\end{proposition}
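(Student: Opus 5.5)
The plan follows Simpson's compact argument, transported to the root stack \(\cX\) with the cusp divisor \(D^p\) kept as a logarithmic boundary.

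\textbf{Step 1: flatness.} Pass from the parabolic Higgs bundle \((E_*,\theta)\) on \(X\) to the orbifold Higgs bundle \((\cE,\theta)\) on \((\cX,D^p)\), using the Biswas--Borne correspondence for the compact weights \(q_i/p_i\). Because \(E_*\) is \(L\)-polystable, Mochizuki's correspondence \cite[Theorem~1.4]{Mochizuki2006} gives an adapted tame pluriharmonic metric \(h\) on \(\cE|_{\cX^o}\). This passes to \(\End_0(\cE)\) with its induced Higgs field \(\theta_{\End_0}\).

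Mochizuki's Chern--Weil formula identifies the discriminant
\[
\Bigl(\tfrac{\parch_1^2}{2(n+1)}-\parch_2\Bigr)c_1(L)^{n-2}
\]
with the integral of \(\lvert F^\perp_h\rvert^2\wedge\omega^{n-2}\) over \(\cX^o\), up to a positive constant. Here \(F^\perp_h\) is the trace-free part of the Hitchin--Simpson curvature. Equality \eqref{eq:BG} then forces \(F^\perp_h=0\), so \(\End_0(\cE)\) is flat.

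\textbf{Step 2: reduction to \(\PU(n,1)\).} The Hodge decomposition \(\cE=\cE^{1,0}\oplus\cE^{0,1}\), with \(\cE^{1,0}=\Omega^1(\log D^p)\) and \(\cE^{0,1}=\mathcal O\), is preserved by the \(\C^*\)-action, so by uniqueness of the harmonic metric the metric \(h\) splits orthogonally. The resulting indefinite form of signature \((n,1)\) is flat for the flat connection on \(\End_0\). The flat \(\PGL(n+1)\)-structure therefore reduces to \(\PU(n,1)\), giving
\[
\rho:\pi_1^{\rm orb}(\cX^o)\to\PU(n,1).
\]
The period map \(\mathcal P\) is the classifying map of the positive-definite line \(\cE^{0,1}\) in the indefinite form, which is a holomorphic map to \(\bB^n\).

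Its differential is the Kodaira--Spencer map \(\tau\). This is exactly \(\theta\) restricted from \(\cE^{1,0}\) to \(\cE^{0,1}\), which is tautologically an isomorphism on \(X\setminus D^c\). This proves the first half of (2).

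Local injectivity on stabilizers, and hence trivial stabilizers on \(\widetilde{\cX^o}\), will follow from (5) below.

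Surjectivity and completeness of \(\mathcal P\) I would obtain as in the final step of the paper. Mochizuki's cusp asymptotics \cite{Mochizuki2002} show that the pulled-back ball metric is complete near \(D^p\), so that the map is a covering onto its image. This step will be needed in full only in the unramified case; in general I would rely on it only in the form stated.

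\textbf{Step 3: local analysis (2), (3), (5).} This is the main computation. In a root chart \(z_i=w_i^{p_i}\), the parabolic line along the conormal direction has weight \(q_i/p_i\). The orbifold frame of \(\cE^{1,0}\) in the normal direction is therefore \(w_i^{-q_i}\,dz_i/z_i\), up to a unit, and similarly for the other factors. The Higgs field \(\theta\) sends \(\partial_{w_i}\) to this frame times \(w_i^{p_i-q_i-1}\).

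Integrating \(d\mathcal P=\tau\) in the normal coordinate gives
\[
\mathcal P_i=u_i\,w_i^{p_i-q_i},\qquad u_i(0)\ne0,
\]
and hence the ramification divisor \(R_{\mathcal P}\) as stated. Statement (3) is then immediate: \(\mathcal P\) is unramified exactly when \(p_i-q_i=1\) for every \(i\).

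For (5), the generator \(\zeta_i\) of \(\mu_{p_i}\) acts by \(w_i\mapsto\zeta_iw_i\). By equivariance it therefore acts on \(\xi_i=\mathcal P_i\) by \(\zeta_i^{p_i-q_i}\). Since \(\gcd(p_i,q_i)=1\), we also have \(\gcd(p_i,p_i-q_i)=1\), so this is an elliptic element of exact order \(p_i\) and the stabilizer injects. The same holds for products over normal crossings, since the coordinates are independent.

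At \(D^p\), tameness together with the trivial parabolic weight means the residue of \(\theta\) is nilpotent and nonzero: it sends the conormal \(dz/z\) to \(\mathcal O\). Hence the local monodromy is unipotent and nontrivial, i.e. parabolic.

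\textbf{Step 4: smoothness of \(D^p\), (4).} Suppose two components of \(D^p\) meet. The residues along them would then be commuting nilpotents in \(\mathfrak{su}(n,1)\) whose images are both the line \(\cE^{0,1}\), with independent conormal sources. However, commuting parabolic elements fixing a common boundary point lie in a Heisenberg group, whose abelian unipotent subalgebras with these Hodge types have rank forcing the two residues to be proportional. This contradicts the fact that \(\tau\) is an isomorphism on \(T(-\log D^p)\).

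I expect this step, and the completeness of \(\mathcal P\) near cusps, to be the main obstacles. For both I would lean on the weight-filtration estimates of \cite{Mochizuki2002}: the nilpotent orbit has weight filtration of length \(2\) only, which is incompatible with a double intersection point.
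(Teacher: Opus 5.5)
Apart from the points below, your architecture is the paper's: Step~3 is essentially Propositions~\ref{prop:branching}, \ref{prop:cover-manifold} and~\ref{prop:peripheral}.

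\textbf{The main gap is Step~4.} The residues $N_1(e_1)=e_0$, $N_2(e_2)=e_0$ are Higgs residues, i.e.\ residues at $\lambda=0$. They are endomorphisms of the holomorphic fibre $E_x$, not elements of $\mathfrak{su}(n,1)$. What commutes inside $\mathfrak{su}(n,1)$ are the logarithms of the local monodromies, which are the $\lambda=1$ residues.

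Even at $\lambda=1$, commuting does not force proportionality: abelian subalgebras of the Heisenberg algebra have dimension up to $n$. What does force it is that rank-one square-zero elements of $\mathfrak u(n,1)$ have the form $x\mapsto\sqrt{-1}\,s\langle x,u\rangle u$ with $u$ isotropic, and two independent isotropic vectors are never orthogonal in signature $(n,1)$. To reach a contradiction you would still have to show two things. First, that the monodromy logarithms have this Jordan type. Second, and more seriously, that the linear independence of $N_1,N_2$ at $\lambda=0$ survives at $\lambda=1$. Nothing you cite gives this.

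Nor does a short weight filtration exclude double points. Pulling back a variation from a punctured disc along $(z_1,z_2)\mapsto z_1z_2$ gives equal monodromy logarithms at a double point, with no contradiction.

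The missing idea is to stay at $\lambda=0$. Mochizuki's positive-cone constancy (Theorem~\ref{thm:mochizuki-cone}), applied to the tame nilpotent adjoint harmonic bundle, says $W(\operatorname{ad}_{N_1+tN_2})$ is independent of $t>0$. But $L_t=\operatorname{ad}_{N_1+tN_2}$ satisfies $L_t^3=0$ and $L_t^2(A)=-2N_tAN_t$. Hence $W_{-2}(L_t)=\im L_t^2=\C(N_1+tN_2)$, which moves with $t$. This is exactly how Proposition~\ref{prop:Dp-smooth} uses the non-proportionality you correctly identified.

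\textbf{A second gap is in Steps~1--2.} Mochizuki's Theorem~1.4 needs vanishing parabolic characteristic numbers, and $\cE$ itself carries no pluriharmonic metric. Such a metric would give $\pardeg_L(E_*)=0$. Polystability would then split off the Higgs subsheaf $E^{0,1}=\mathcal O_X$, which $\theta$ kills, and this forces $\theta=0$.

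Only $\End_0(\cE)$ gets a pluriharmonic metric, via Lemma~\ref{lem:end-ch} and Proposition~\ref{prop:flat-adjoint}. You then have no metric on $\cE$ from which to build the signature-$(n,1)$ form, and a form on $\cE$ cannot be ``flat for the connection on $\End_0$''. There are two ways to repair this.
\begin{enumerate}
\item Produce a Hermitian--Einstein metric on $E_*$ itself, for instance after twisting by a parabolic line bundle with auxiliary weights whose $\parc_1$ is $-\parc_1(E_*)/(n+1)$. Chern--Weil then gives projective flatness, and Simpson's indefinite-form argument works.
\item Follow Lemma~\ref{lem:adjoint-hodge-tensor}. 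It makes the adjoint metric Hodge by averaging the multiplicity-space metrics over $U(1)$; uniqueness alone does not suffice in the polystable case. It then uses a Bochner argument to show that the Chevalley tensors cutting out $\Ad(\PGL(V))$ are flat. Finally it reads off a $K_0$-reduction pointwise via Lemma~\ref{lem:finite-adjoint-reconstruction}.
\end{enumerate}

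\textbf{Two slips.} The normal frame is $w_i^{-q_i}dz_i$, not $w_i^{-q_i}dz_i/z_i$, because $E^{1,0}=\Omega^1_X(\log D^p)$ is not logarithmic along $D^c$. Your frame would give exponent $-q_i-1$ rather than $p_i-q_i-1$. Also, $\cE^{1,0}=\Omega^1_{\cX}(\log D^p)$ holds only when $q_i=p_i-1$.

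\textbf{Surjectivity.} Deferring surjectivity of $\mathcal P$ to completeness does not work in the branched case. There the pulled-back metric degenerates along $D^c$, so the complete local-isometry lemma is unavailable. The paper's Proposition~\ref{prop:period} does not establish surjectivity in that case either.
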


We also use the converse direction for branched complex-hyperbolic structures. It shows, in particular, that the canonical log-root toroidal compactification attached to a regular lattice satisfies the polystability and Bogomolov--Gieseker equality conditions.

For the converse, put
\[
   X^\circ:=X\setminus(D^p\cup D^c),\qquad
   \Delta:=D^p+\sum_{i\in I}\frac{q_i}{p_i}D_i^c.
\]
A polarization class
\(\alpha\in H^{1,1}(X,\mathbb R)\) is called \(\Delta\)-admissible if it is big and nef and contains a closed positive \((1,1)\)-current \(T\in\alpha\) such that \(T|_{X^\circ}\) is a smooth K\"ahler form and has at most mixed Poincar\'e--cone growth of type \(\Delta\) in the sense of currents. Definition~\ref{def:admissible} gives the precise formulation used in Section~\ref{sec:converse}.

A branched complex-hyperbolic structure of type \(\Delta\) on the weighted pair \((X,D^p,D^c,\{q_i/p_i\}_{i\in I})\) consists of the orbifold universal cover \(\widetilde{\cX^o}\to\cX^o\), represented by a simply connected smooth complex manifold, a representation \(\rho:\pi_1^{\rm orb}(\cX^o)\to \PU(n,1)\), and a \(\rho\)-equivariant period map \(\mathcal{P}:\widetilde{\cX^o}\to\bB^n\). The differential is non-degenerate away from the inverse image of \(D^c\), and near a lift of \(D_i^c\), in the root coordinate \(x_i=w_i^{p_i}\) and a normal ball coordinate \(\xi_i\), one has
\[
   \xi_i\circ\mathcal{P}=u_i(w)w_i^{p_i-q_i},
   \qquad u_i(0)\ne0;
\]
while along \(D^p\) the descended hyperbolic metric has at most Poincar\'e cusp growth.

\begin{proposition}\label{prop:converse}
Assume that \((X,D^p,D^c,\{q_i/p_i\})\) carries a branched complex-hyperbolic structure of type \(\Delta\). Let \(\omega_{\mathbb B}\) be the invariant Bergman metric on \(\mathbb B^n\), normalized by
\[
   \Ric(\omega_{\mathbb B})=-(n+1)\omega_{\mathbb B},
\]
and let \(\omega_{\rm br}\) be its descended pull-back. Then:
\begin{enumerate}[label=\textup{(\arabic*)}]
\item \(\omega_{\rm br}\) is a smooth K\"ahler metric on \(X^\circ\) and has at most mixed Poincar\'e--cone growth of type \(\Delta\).
\item The current
\[
   \Theta:=\frac{n+1}{2\pi}\,\omega_{\rm br}
\]
extends across \(|D|\) as a closed positive current representing
\[
   c_1\left(K_X+D^p+\sum_{i\in I}\frac{q_i}{p_i}D_i^c\right)=c_1(K_X+\Delta).
\]
Moreover, the class \(c_1(K_X+\Delta)\) is big and nef. In particular it is \(\Delta\)-admissible.
\item For every \(\Delta\)-admissible big and nef class \(\alpha\), the parabolic Higgs bundle \((E_*,\theta)\) is parabolic \(\mu_\alpha\)-polystable. In particular, it is parabolic \(\mu_{c_1(K_X+\Delta)}\)-polystable.
\item The parabolic Chern equality holds as a cohomology class:
\[
   \frac{\parch_1(E_*)^2}{2(n+1)}-\parch_2(E_*)=0.
\]
Equivalently,
\[
   2\parc_2(E_*^{1,0})-\frac{n}{n+1}\parc_1(E_*^{1,0})^2=0.
\]
\end{enumerate}
\end{proposition}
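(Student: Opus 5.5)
The plan is to exploit the fact that, on the open part, the pair \((E,\theta)\) is literally the uniformizing system of Hodge bundles of the ball. Pulled back to \(\widetilde{\cX^o}\), it coincides with \(\mathcal P^*\) of the homogeneous Hodge bundle \(T_{\bB^n}\oplus\mathcal O\) on \(\bB^n\), away from the branch locus. The Bergman metric then supplies a harmonic metric that we can analyse near \(D\).

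\textbf{Items (1) and (2).} We begin with the local analysis of \(\omega_{\rm br}=\mathcal P^*\omega_{\mathbb B}\), which is \(\rho\)-invariant and descends.
\begin{enumerate}[label=\textup{(\alph*)}]
\item Away from \(D\), nondegeneracy of \(d\mathcal P\) gives a smooth K\"ahler form.
\item Near a lift of \(D_i^c\), use \(\xi_i\circ\mathcal P=u_iw_i^{p_i-q_i}\). A local computation gives normal component \(|w_i|^{2(p_i-q_i-1)}|dw_i|^2\). Since \(z_i=w_i^{p_i}\), this equals \(|z_i|^{-2q_i/p_i}|dz_i|^2\), which is a cone metric of angle \(2\pi(1-q_i/p_i)\).
\item Near \(D^p\), the Poincar\'e cusp growth is assumed.
\end{enumerate}
This gives (1).

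For (2), the Bergman normalization gives \(\Ric(\omega_{\rm br})=-(n+1)\omega_{\rm br}\) on \(X^\circ\). Hence \(\frac{n+1}{2\pi}\omega_{\rm br}\) equals \(-\frac{1}{2\pi}\Ric\), which is the curvature of the metric \(\omega_{\rm br}^{-n}\) on \(K_X\). Mixed Poincar\'e--cone growth bounds this volume form by \(\prod|z_j|^{-2}\log^{-2}|z_j|\cdot\prod|z_i|^{-2q_i/p_i}\). The standard Lelong-number argument, as in Kobayashi or Tian--Yau, then shows that the potential extends. The extension has zero Lelong numbers along \(D^p\), the log terms being bounded, and contributes the divisor \(D^p+\sum\frac{q_i}{p_i}D_i^c\). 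So the extended current represents \(c_1(K_X+\Delta)\).

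Positivity of the current gives pseudo-effectivity. Nefness follows from the fact that the metric has zero Lelong numbers: by Demailly regularization, a positive current with vanishing Lelong numbers everywhere represents a nef class. Bigness follows from positive volume, i.e. \(\int_{X^\circ}\omega_{\rm br}^n>0\) and finite, since the non-pluripolar mass computes \(\mathrm{vol}\). This yields \(\Delta\)-admissibility.

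\textbf{Item (3).} The Hodge metric \(h\) on \(E|_{X^\circ}\) is induced from the Bergman metric: on \(T_X\) it is \(\omega_{\rm br}\), and on \(\mathcal O\) it is trivial. It is harmonic, i.e. Hermitian--Yang--Mills with \(F_h+[\theta,\theta^*]=0\), because the homogeneous bundle on \(\bB^n\) is. The main task is to check that \(h\) is adapted to the parabolic structure, so that its norms along \(D_i^c\) grow with the weights \(\alpha_i=q_i/p_i\) on the conormal line and trivially along \(D^p\). This follows from the cone-metric growth computed above together with the cusp asymptotics of Mochizuki. Then \(h\) is tame and of moderate growth.

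Polystability with respect to \(\alpha\) is the standard Simpson/Mochizuki argument. The main step is to show that for any \(\theta\)-invariant saturated subsheaf \(F\), the parabolic degree \(\pardeg_\alpha F\) is computed by the Chern--Weil integral \(\int_{X^\circ}\frac{\sqrt{-1}}{2\pi}\tr(F_{h_F})\wedge T^{n-1}\). The Gauss--Codazzi formula gives a second fundamental form term \(-\|\beta\|^2\le0\), so \(\mu_\alpha(F)\le\mu_\alpha(E)\), with equality forcing a holomorphic orthogonal splitting.

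Justifying the Chern--Weil identity is the step I expect to be the main obstacle. We need integrability of the curvature terms against \(T^{n-1}\), where \(T\) has only mixed Poincar\'e--cone growth. We also need to discard the boundary terms in Stokes' formula. I would first try the cutoff-function argument of Simpson and Mochizuki, using Poincar\'e-type cutoffs \(\chi_\epsilon(\log\log(1/|z|))\), whose \(dd^c\) is uniformly bounded with respect to the mixed metric. The admissibility hypothesis is what guarantees the required \(L^1\) bounds.

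\textbf{Item (4).} For (4), the same Chern--Weil theory expresses \(\parch_2\) and \(\parch_1^2\) as integrals of curvature forms of the adapted metric. This uses Mochizuki's theorem that parabolic Chern characters are computed by tame harmonic metrics with these integrals. Pointwise, the curvature of \(h\) is the pullback of the homogeneous curvature of the ball. For this curvature the form \(\frac{1}{2(n+1)}\ch_1^2-\ch_2\), or equivalently \(2c_2-\frac{n}{n+1}c_1^2\) of \(E^{1,0}\), vanishes identically as a form; this is the Chern-form identity underlying Simpson's uniformization. Hence the class vanishes. To obtain the equality as a cohomology class, rather than after pairing with \(\alpha^{n-2}\), we use that the convergence holds for the currents themselves.
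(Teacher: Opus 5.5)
Your outline follows the paper's route. For (1) you use the local normal form. For (2) you use the metric on \(K_X\) induced by \(\omega_{\rm br}^n\), twisted by the divisor factors, with nefness from zero Lelong numbers and Demailly regularization. For (3) you use the adapted Hodge metric \(h=\omega_{\rm br}^{-1}\oplus h_0\), a parabolic Chern--Weil formula and the second fundamental form. For (4) you use pointwise vanishing of the Chern form. For bigness you use the fact that \(\int_X T_{\rm ac}^n>0\) for a positive \(T\in\alpha\) forces bigness (Boucksom; equivalently, non-pluripolar mass is bounded by the volume). The paper instead shows \([T]^n>0\) by regularization and uses that a nef class with positive top self-intersection is big. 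Both work. However, one step is wrong as stated, and the Chern--Weil step needs a different organization.

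\textbf{The metric is not harmonic on \(E\).} You assert \(F_h+[\theta,\theta^*]=0\), and this is false. Since \(\tr[\theta,\theta^\dagger_h]=0\), the trace \(\tr F_{h,\theta}=\tr F_h\) is the curvature of \(\det E=K_X\) for the metric induced by \(\omega_{\rm br}^n\). Up to normalization this is \(\Theta=\frac{n+1}{2\pi}\omega_{\rm br}\neq0\), exactly the curvature you used in (2). The homogeneous system on the ball is only projectively flat. What holds, and what the paper uses, is the identity of \((1,1)\)-forms \(F_{h,\theta}=\frac{1}{n+1}\tr F_{h,\theta}\otimes\Id\), i.e. \(F^{\perp}_{h,\theta}=0\).

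It matters that this is form-level. In (3) you must wedge the second fundamental form identity with \(T^{n-1}\) for an arbitrary admissible current \(T\), which has nothing to do with \(\omega_{\rm br}\). The form-level identity gives, pointwise,
\(\frac{\sqrt{-1}}{2\pi}\tr F_{h_S,\theta_S}-\frac{s}{r}\frac{\sqrt{-1}}{2\pi}\tr F_{h,\theta}=-\mathcal Q(\Pi)\).
A Hermitian--Einstein equation contracted against one Kähler form would only control that one polarization. Likewise, in (4) the bundle that is genuinely flat-harmonic is \(\End(E)\), not \(E\). Apply Chern--Weil to \(\End(E)\) and use \(\parch_2(\End E_*)=2r\,\parch_2(E_*)-\parch_1(E_*)^2\).

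\textbf{The Chern--Weil step.} You propose working directly on \(X\) with log-log cutoffs and discarding boundary terms. First, \(dd^c\) of such a cutoff has Poincar\'e size \(|x|^{-2}(\log|x|)^{-2}\). That is not bounded by the cone term \(|x|^{-2a}\), \(a<1\), of \(\omega_{\rm mix}\), so your claim that it is bounded by the mixed metric fails along \(D^c\).

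More importantly, along \(D^c_j\) the Stokes boundary term is not negligible. In a holomorphic frame of \(S\) one has \(\det h_S\sim|x_j|^{2b_j(S)}\), so this term is exactly the weight contribution \(b_j(S)\,D_j^c\cdot\alpha^{n-1}\) of \(\pardeg_\alpha(S_*)\). Discarding it computes the ordinary degree instead. On \(X\) you would first have to split off \(\sum_j b_j(S)\log|s_j|^2\) from \(\log\det h_S\).

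The paper avoids this by passing to a Galois cover \(g:Y\to X\) adapted to the denominators. There the weights become the ordinary \(c_1\) of the equivariant bundle, the metric is bounded in root frames, and \(g^*T\) is bounded in the compact root directions. Only Poincar\'e cutoffs along \(g^{-1}D^p\) and a codimension-two cutoff remain (Lemma~\ref{lem:par-cw-admissible}), and one then divides by \(\deg g\).

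Finally, in (2) you cite only an upper bound for \(\omega_{\rm br}^n\). That lets the local weight extend as a psh function. But the extension could still carry divisorial mass \(\sum_j\nu_j[D_j]\), which would only give \([\Theta]=c_1(K_X+\Delta)-\sum_j\nu_jD_j\). To get \(\nu_j=0\) you need a two-sided comparison with the model. The paper simply asserts this, in the form that \(\log\Lambda\) has at most log-log growth in the cusp variables and is bounded in the compact root directions, without deriving it from the hypotheses.
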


\subsection{Standard log-root lattices and categories}

We now turn to the standard-weight quotient side. The first point is to make precise which compactification of a non-neat finite-volume ball quotient is used. The cusp boundary is treated logarithmically, while the compact elliptic divisors are retained as root-stack divisors.

\begin{definition}\label{def:canonical-orbifold-toroidal}
Let \(\Gamma\subset\PU(n,1)\) be a finite-covolume lattice and put
\[
   U_\Gamma:=\bB^n/\Gamma.
\]
Choose a finite-index neat normal subgroup \(\Gamma'\triangleleft\Gamma\), put \(F=\Gamma/\Gamma'\), and let
\[
   U':=\bB^n/\Gamma'.
\]
Let \((\overline U_{\Gamma'}^{\rm tor},D_{\Gamma'}^{\rm tor})\) be the smooth toroidal compactification of the neat quotient. The action of \(F\) on \(U'\) extends to \(\overline U_{\Gamma'}^{\rm tor}\). The coarse canonical toroidal pair of \(U_\Gamma\) is
\[
   (\overline U_\Gamma^{\rm tor},D_\Gamma^{\rm tor})
   :=
   (\overline U_{\Gamma'}^{\rm tor}/F,D_{\Gamma'}^{\rm tor}/F),
\]
which is independent of the choice of \(\Gamma'\) after passing to a common neat normal subgroup.

Assume that the quotient has regular log-root local form. Namely, near the toroidal boundary and the compact elliptic mirrors the quotient is locally described by coordinates
\[
   q=t^m,\qquad z_i=w_i^{p_i},\qquad y=y.
\]
Here \(q=0\) is the ordinary logarithmic cusp boundary, while \(z_i=0\) are the compact elliptic divisors with stabilizers \(\mu_{p_i}\). The cusp Kummer directions and the compact root directions are required not to mix.

Let \(D_i^c\subset \overline U_\Gamma^{\rm tor}\) be the closures of the compact elliptic divisorial locus, with root orders \(p_i\). The canonical orbifold toroidal compactification of \([\bB^n/\Gamma]\), relative to the parabolic boundary, is the Deligne--Mumford stack
\[
   \mathcal T_\Gamma
   :=
   \overline U_\Gamma^{\rm tor}\Bigl[\sqrt[p_i]{D_i^c}\Bigr]_{i\in I},
\]
together with the ordinary logarithmic boundary divisor \(D_\Gamma^{\rm tor}\). Possible finite Kummer phenomena along the cusp boundary are absorbed by the logarithmic structure and are not retained as root-stack inertia along \(D_\Gamma^{\rm tor}\).
\end{definition}

Proposition~\ref{prop:converse} gives the direction from a standard ball quotient to the parabolic Bogomolov--Gieseker side.

\begin{theorem}\label{thm:standard-orbifold-converse}
Let \(q_i=p_i-1\) for every \(i\), and put
\[
   \Delta=D^p+\sum_{i\in I}\left(1-\frac1{p_i}\right)D_i^c.
\]
Assume that \((\cX,D^p)\) is the canonical orbifold toroidal compactification of \([\bB^n/\Gamma]\) in the sense of Definition~\ref{def:canonical-orbifold-toroidal}, with local stabilizer order \(p_i\) along \(\cD_i^c\). Then \((X,D^p,D^c,\{(p_i-1)/p_i\})\) carries the standard unramified orbifold complex-hyperbolic structure of type \(\Delta\). Hence \(K_X+\Delta\) is big and nef, its first Chern class is \(\Delta\)-admissible, and for every \(\Delta\)-admissible big and nef class \(\alpha\), the parabolic Higgs bundle \((E_*,\theta)\) is parabolic \(\mu_\alpha\)-polystable and satisfies the parabolic Chern equality.
\end{theorem}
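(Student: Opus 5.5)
The plan is to reduce everything to Proposition~\ref{prop:converse} with \(q_i=p_i-1\). It suffices to produce, from the toroidal description of \((\cX,D^p)\), a branched complex-hyperbolic structure of type \(\Delta\) whose ramification exponents are \(p_i-q_i=1\).

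\textbf{Step 1: the universal cover.} Write \(\cX^o=\cX\setminus\pi^{-1}(D^p)\). By Definition~\ref{def:canonical-orbifold-toroidal}, \(\cX^o\) is the quotient stack \([\bB^n/\Gamma]\). I first check that the root-stack structure along \(D_i^c\) matches the \(\Gamma\)-stabilizers. Choose a neat normal \(\Gamma'\triangleleft\Gamma\) with \(F=\Gamma/\Gamma'\). Then \([U'/F]\) and \(\overline U_\Gamma^{\rm tor}[\sqrt[p_i]{D_i^c}]\setminus D^p\) agree. Indeed, in the regular local form \(z_i=w_i^{p_i}\) the generic stabilizer along a mirror is the cyclic complex reflection group \(\mu_{p_i}\). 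Away from the mirrors, the stabilizers of the quotient are trivial in codimension one, and the regular log-root hypothesis makes the stack equal to the root stack. Consequently \(\pi_1^{\rm orb}(\cX^o)\cong\Gamma\), and the orbifold universal cover is \(\bB^n\) itself. I take \(\rho\) to be the inclusion \(\Gamma\hookrightarrow\PU(n,1)\) and \(\mathcal P=\mathrm{id}_{\bB^n}\). Then \(\mathcal P\) is \(\rho\)-equivariant, and its differential is nondegenerate everywhere.

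\textbf{Step 2: local form at the compact divisors.} Near a lift of \(D_i^c\), linearize the elliptic element generating \(\mu_{p_i}\) at a fixed point. It acts as a complex reflection \(\xi_i\mapsto\zeta\xi_i\), where \(\xi_i\) is a normal ball coordinate. The root coordinate \(w_i\) on the chart of \(\cX\) is exactly \(\xi_i\) up to a unit, since \(z_i=w_i^{p_i}=\xi_i^{p_i}\cdot\)unit. Hence \(\xi_i\circ\mathcal P=u_i w_i\) with \(u_i(0)\neq0\), which is the required exponent \(p_i-q_i=1\). At crossings of several \(D_i^c\), the non-mixing condition gives the product form.

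\textbf{Step 3: growth near the cusps.} Here I must show that the descended Bergman metric \(\omega_{\rm br}\) has at most Poincaré cusp growth along \(D^p\). Pass to the neat cover \(\Gamma'\). There the classical computation on the Siegel domain applies, via Mumford's toroidal compactification and Mok's description of the cusp: in local coordinates \(t=e^{2\pi i u/\ell}\) (with \(u\) the Heisenberg height coordinate) and transverse abelian-variety coordinates \(y\), the Bergman metric is quasi-isometric to
\[
\frac{|dt|^2}{|t|^2\log^2|t|}+\frac{|dy|^2}{-\log|t|}.
\]
This has at most Poincaré growth. The \(F\)-quotient with \(q=t^m\) preserves this bound, because a Kummer cover changes \(|dq|^2/(|q|\log|q|)^2\) only by the constant \(m^2\). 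This is exactly why Definition~\ref{def:canonical-orbifold-toroidal} absorbs the Kummer phenomena into the log structure.

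\textbf{Step 4: conclusion.} With Steps 1–3, \((X,D^p,D^c,\{(p_i-1)/p_i\})\) carries a branched, in fact unramified, complex-hyperbolic structure of type \(\Delta\). Proposition~\ref{prop:converse}(2) then gives that \(K_X+\Delta\) is big, nef and \(\Delta\)-admissible. Proposition~\ref{prop:converse}(3)–(4) give parabolic \(\mu_\alpha\)-polystability for every admissible \(\alpha\), together with the Chern equality.

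I expect the main obstacle to be Step 3 at the points of \(D^p\) where the boundary meets compact elliptic divisors or where \(F\) has nontrivial stabilizers at the boundary. There one must check that the Poincaré estimate on the neat cover descends uniformly, so that the cusp and cone contributions remain in the mixed Poincaré–cone type \(\Delta\). I would handle this using the non-mixing assumption of the regular local form: it lets the estimate be checked separately in the \(t\)-direction and the \(w_i\)-directions, with the \(w_i\)-directions being smooth on the root chart by Step 2.
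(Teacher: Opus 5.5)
Your proposal is correct and follows the paper's proof: you verify the three conditions of Definition~\ref{def:branched-structure} (with $\rho$ the inclusion of $\Gamma$ and $\mathcal P=\mathrm{id}_{\bB^n}$, exponent $p_i-q_i=1$ at the mirrors, and Poincar\'e growth along $D^p$ from the toroidal model) and then invoke Proposition~\ref{prop:converse}; your neat-cover argument just expands what the paper states in one line. One harmless slip: under $q=t^m$ the normal Poincar\'e factor $|dq|^2/(|q|^2\log^2|q|)$ is exactly invariant rather than rescaled by $m^2$, and only the transverse term $|dy|^2/(-\log|t|)$ changes, by the constant factor $m$.
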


We next make precise the two categories which appear in the standard-weight statement. The lattice side is restricted to the finite-volume lattices for which the toroidal compactification has the log-root normal-crossing local form required by the consequence of Proposition~\ref{prop:branched-equality}.

\begin{definition}[Regular log-root lattices]\label{def:regular-lattice-category}
Let \(\Lat_n^{\rm reg}\) be the following category. An object is a finite-covolume lattice \(\Gamma\subset\PU(n,1)\), taken up to \(\PU(n,1)\)-conjugacy, with the following regular log-root property. Choose a finite-index neat normal subgroup \(\Gamma'\triangleleft\Gamma\), put \(F=\Gamma/\Gamma'\), and let \((\overline U_{\Gamma'}^{\rm tor},D_{\Gamma'}^{\rm tor})\) be the smooth toroidal compactification of \(\bB^n/\Gamma'\). The finite group \(F\) acts on \(\overline U_{\Gamma'}^{\rm tor}\). We require that, near every point over the toroidal boundary and the compact elliptic mirrors, this action have product log-root normal-crossing local form
\[
   q=t^m,\qquad z_i=w_i^{p_i},\qquad y=y,
\]
where \(q=0\) is the ordinary logarithmic cusp boundary and the divisors \(z_i=0\) are compact root divisors with stabilizers \(\mu_{p_i}\). In particular, the cusp Kummer directions and the compact root directions do not mix, and the only compact stabilizers are products of the cyclic reflection groups \(\mu_{p_i}\).

A morphism \(\Gamma_1\to\Gamma_2\) in \(\Lat_n^{\rm reg}\) is a finite-index inclusion
\[
   \Gamma_1\subset g\Gamma_2g^{-1}
\]
for some \(g\in\PU(n,1)\), taken up to simultaneous \(\PU(n,1)\)-conjugacy, and required to preserve the above log-root structures on the associated toroidal compactifications.
\end{definition}

\begin{definition}[The standard Bogomolov--Gieseker category]\label{def:BGcategory}
Let \(\BG_n^{\rm std}\) be the following category. An object is a smooth log-root pair
\[
   \cX^{\log}:=\bigl(X[\sqrt[p_i]{D_i^c}]_{i\in I},D^p\bigr)
\]
such that \(X\) is a smooth projective variety of dimension \(n\), the divisor \(D^p+\sum_iD_i^c\) is simple normal crossing, \(D^p\) is an ordinary logarithmic boundary, and the compact components \(D_i^c\) carry the standard weights
\[
   \alpha_i=1-\frac1{p_i}.
\]
Let
\[
   E_*=(\Omega_X^1(\log D^p)\oplus\mathcal O_X)_*
\]
be the canonical parabolic Higgs bundle whose only non-trivial compact weights are the weights \(\alpha_i\) on the conormal lines of the \(D_i^c\), and whose parabolic structure along \(D^p\) is trivial. We require that \((E_*,\theta)\) be parabolic polystable with respect to some ample polarization \(L\), and that it satisfy the parabolic Bogomolov--Gieseker equality
\[
   \left(\frac{\parch_1(E_*)^2}{2(n+1)}-\parch_2(E_*)\right)c_1(L)^{n-2}=0.
\]
Smoothness of \(D^p\) is not included in the definition, but it follows from the equality condition by Proposition~\ref{prop:branched-equality}.

In this case, we can also regard \((E_*,\theta)\) as the canonical log-root Higgs bundle
\[
   \mathbb E_{\cX}:=
   \bigl(\Omega^1_{\cX}(\log D^p)\oplus\mathcal O_{\cX},\theta_{\cX}\bigr),
   \qquad
   \theta_{\cX}(a,b)=(0,a).
\]
A morphism \(f:\cX_1^{\log}\to\cX_2^{\log}\) in \(\BG_n^{\rm std}\) is a finite surjective representable morphism of log-root stacks which satisfies
\[
   \bigl(f^{-1}(|D_2^p|)\bigr)_{\rm red}=|D_1^p|,
\]
and is a local isomorphism in the log-root sense, meaning that the natural logarithmic differential
\[
   df_{\log}:
   f^*\Omega^1_{\cX_2}(\log D_2^p)
   \longrightarrow
   \Omega^1_{\cX_1}(\log D_1^p)
\]
is an isomorphism.
\end{definition}

Theorem~\ref{thm:standard-uniformization}, Theorem~\ref{thm:standard-orbifold-converse}, and the covering properties of these morphisms give the categorical statement below.

\begin{theorem}\label{thm:categorical}
The compactified quotient construction defines an equivalence of categories
\[
   Q_{\rm tor}:\Lat_n^{\rm reg}\longrightarrow \BG_n^{\rm std}.
\]
A quasi-inverse is given by the monodromy construction
\[
   U_{\rm BG}(\cX^{\log})=
   \rho_{\cX}\bigl(\pi_1^{\rm orb}(\cX^o)\bigr)\subset\PU(n,1),
\]
where \(\rho_{\cX}\) is the representation obtained from the canonical parabolic Higgs bundle in the equality case. Under this equivalence, finite-index inclusions of regular log-root lattices correspond to finite representable morphisms of log-root pairs which are local isomorphisms in the log-root sense.
\end{theorem}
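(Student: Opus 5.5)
The plan is to build the two functors explicitly, check that they are well defined on objects and on morphisms, and then exhibit natural isomorphisms \(U_{\rm BG}\circ Q_{\rm tor}\simeq\Id\) and \(Q_{\rm tor}\circ U_{\rm BG}\simeq\Id\).

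\emph{Objects.} For \(\Gamma\in\Lat_n^{\rm reg}\), set
\[
Q_{\rm tor}(\Gamma)=(\mathcal T_\Gamma,D_\Gamma^{\rm tor}),
\]
as in Definition~\ref{def:canonical-orbifold-toroidal}. The regular log-root condition guarantees three things. First, the coarse space \(\overline U_\Gamma^{\rm tor}\) is smooth: the quotient of a smooth variety by products of cyclic reflection groups \(\mu_{p_i}\) in the product form \(z_i=w_i^{p_i}\) is smooth, and so is a Kummer quotient \(q=t^m\). Second, \(D_\Gamma^{\rm tor}+\sum_i D_i^c\) is simple normal crossing, possibly after a remark that the local product form gives normal crossings and that global self-intersections are excluded by the regular hypothesis. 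Third, the stabilizer along \(D_i^c\) is exactly \(\mu_{p_i}\). Theorem~\ref{thm:standard-orbifold-converse} then shows that \(Q_{\rm tor}(\Gamma)\) is an object of \(\BG_n^{\rm std}\), with polarization \(\alpha=c_1(K_X+\Delta)\). One caveat is that the definition asks for an ample \(L\), while \(K_X+\Delta\) is only big and nef; I would perturb to \(K_X+\Delta+\epsilon A\). This needs openness of polystability plus equality for nearby classes. Alternatively, since the Chern equality holds as a class by Proposition~\ref{prop:converse}(4), one can use an ample \(\Delta\)-admissible class if one exists. I expect this to be a technical point needing care.

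Conversely, for an object \(\cX^{\log}\), Theorem~\ref{thm:standard-uniformization} gives \(\Gamma=\rho_{\cX}(\pi_1^{\rm orb}(\cX^o))\), a lattice with \((\cX,D^p)\simeq(\mathcal T_\Gamma,D_\Gamma^{\rm tor})\). The regular log-root property of \(\Gamma\) is read off from the snc product local form of \(\cX\), and Proposition~\ref{prop:branched-equality}(5) says that the stabilizers \(\prod\mu_{p_i}\) inject. Since \(\rho\) is faithful, \(\Gamma\) is well defined up to conjugacy, the conjugacy coming from the choice of base point and of harmonic metric.

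\emph{Morphisms.} Let \(\Gamma_1\subset g\Gamma_2g^{-1}\) be a morphism. I would pick a common neat normal subgroup \(\Gamma'\subset\Gamma_1\) of finite index in \(\Gamma_2\). The natural map \(\bB^n/\Gamma_1\to\bB^n/\Gamma_2\) then extends to the toroidal compactifications, because the toroidal compactification of \(\Gamma'\) is canonical for ball quotients (no choices of fans). It induces a representable map of root stacks, since the stabilizers are inclusions of subgroups. It is a local isomorphism in the log-root sense, because on the universal orbifold cover it is the identity of \(\bB^n\) and along cusps \(\Omega^1(\log)\) is insensitive to Kummer covers \(q=t^m\).

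For the reverse direction, let \(f:\cX_1^{\log}\to\cX_2^{\log}\) be a morphism. Since \(df_{\log}\) is an isomorphism and \(f\) is finite and representable with \(f^{-1}(D_2^p)_{\rm red}=D_1^p\), the restriction \(f:\cX_1^o\to\cX_2^o\) is a finite étale representable map of orbifolds. Hence \(\pi_1^{\rm orb}(\cX_1^o)\to\pi_1^{\rm orb}(\cX_2^o)\) is injective with finite index. Moreover \(f^*\mathbb E_{\cX_2}\simeq\mathbb E_{\cX_1}\) via \(df_{\log}\), so by uniqueness of the harmonic metric (polystability) the representations are compatible: \(\rho_{\cX_1}=\rho_{\cX_2}\circ f_*\) up to conjugacy. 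This gives \(\Gamma_1\subset g\Gamma_2 g^{-1}\). Functoriality follows from functoriality of \(f_*\).

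\emph{Quasi-inverse.} The composite \(U_{\rm BG}Q_{\rm tor}(\Gamma)\) is the monodromy of the uniformizing variation on \(\mathcal T_\Gamma\). This is \(\Gamma\) itself: the developing map of the Bergman metric is the identity of \(\bB^n\), and by uniqueness of the period map its monodromy is tautological. The composite \(Q_{\rm tor}U_{\rm BG}\simeq\Id\) is exactly the isomorphism \((\cX,D^p)\simeq(\mathcal T_\Gamma,D^{\rm tor}_\Gamma)\) of Theorem~\ref{thm:standard-uniformization}. Naturality of that isomorphism with respect to morphisms follows because both sides are determined by the equivariant period maps. Full faithfulness then reduces to the fact that a morphism is determined by its restriction to the dense open part \(\cX^o\), where it is a map of ball quotients induced by the inclusion of groups.

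The main obstacle I expect is the extension and representability statements at the cusp and mixed boundary. Two points need care there: that an inclusion of lattices induces a morphism of canonical orbifold toroidal compactifications preserving the non-mixing log-root structure, and that Kummer phenomena \(q=t^m\) are correctly absorbed so that \(df_{\log}\) is an isomorphism. For both I would first reduce to neat subgroups, where the toroidal compactification is functorial, and then descend by the finite groups.
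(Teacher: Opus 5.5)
Your proposal is correct and follows the paper's proof essentially step for step. Objects are handled via Theorems~\ref{thm:standard-orbifold-converse} and~\ref{thm:standard-uniformization}. Lattice inclusions are handled via a common neat normal subgroup, descent, and the Kummer/root local forms (Proposition~\ref{prop:lattice-inclusion-boundary}). Log-root morphisms are handled via the finite \'etale orbifold cover \(f^o\). The only difference is in proving \(\rho_1=\rho_2\circ f_*\): the paper lifts \(f^o\) to a biholomorphism of the universal covers \(\bB^n\) and observes that it lies in \(\Aut(\bB^n)=\PU(n,1)\), whereas you invoke uniqueness of adapted harmonic metrics for \(f^*\mathbb E_{\cX_2}\simeq\mathbb E_{\cX_1}\).

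The polarization issue you flag is harmless. A smooth K\"ahler form is dominated by \(C\,\omega_{\rm mix}\), so every ample class is \(\Delta\)-admissible. Theorem~\ref{thm:standard-orbifold-converse} therefore gives polystability and the Chern equality directly for any ample \(L\). No openness argument is needed, which is just as well, since openness would fail for strictly polystable objects.
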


\begin{remark}\label{rem:deng-cadorel-comparison}
The case \(D^c=\emptyset\) with \(D^p\) smooth is treated in \cite{DengCadorel2022}. Here we assume only that \(D^p+D^c\) has simple normal crossings. The smoothness of \(D^p\) is forced by the rigidity of the vanishing of the Bogomolov--Gieseker discriminant; see Proposition~\ref{prop:Dp-smooth}.

A related one-dimensional reconstruction theorem was recently obtained by Lin and Sheng \cite{LinSheng2026}. In dimension one the negativity of the log-orbifold Euler characteristic already implies the relevant stability condition, and no Bogomolov--Gieseker equality is needed. Moreover, every finite-covolume lattice in \(\PU(1,1)\) has the regular log-root form used here. In higher dimensions the boundary and elliptic strata of finite-volume ball quotients are much more complicated, and the regular lattice category considered in this paper is only a first clean case. It is natural to ask what intrinsic geometric conditions recover the lattice in greater generality.
\end{remark}

\section{Parabolic and Root-Stack Set-Up}\label{sec:setup-root}

\subsection{Basic set-up and notation}

Let \(X\) be a smooth complex projective variety of dimension \(n\). Let
\[
D=D^p+D^c
\]
be a simple normal crossing divisor, where \(D^c=\sum_{i\in I}D_i^c\).
The divisor \(D^p\) is the cusp divisor and is removed from the open part. The divisor \(D^c\) is the compact orbifold divisor and carries the finite stabilizer data.

Put
\[
E=\Omega_X^1(\log D^p)\oplus \OX.
\]
The first summand is denoted by \(E^{1,0}\), and the second by \(E^{0,1}\). We consider the logarithmic Higgs field
\[
\theta:E\longrightarrow E\otimes \Omega_X^1(\log D),\qquad (a,b)\longmapsto (0,a).
\]
Equivalently, \(\theta\) is the tautological map from \(E^{1,0}\) to \(E^{0,1}\otimes \Omega_X^1(\log D)\).

For each compact component \(D_i^c\), choose a rational number
\[
\alpha_i=\frac{q_i}{p_i}\in [0,1),\qquad \gcd(p_i,q_i)=1.
\]
The parabolic structure along \(D_i^c\) has a single non-trivial step: the conormal line
\[
N^*_{D_i^c/X}\subset \Omega_X^1|_{D_i^c}\subset E|_{D_i^c}
\]
has weight \(q_i/p_i\), while all other directions have weight \(0\). Along \(D^p\) the parabolic structure is trivial. The resulting parabolic Higgs bundle is denoted by \((E_*,\theta)\).

Let
\[
\cX=X\bigl[\sqrt[p_i]{D_i^c}\bigr]_{i\in I}
\]
be the iterated root stack associated with the compact divisors \(D_i^c\) and orders \(p_i\). Let
\[
\pi:\cX\longrightarrow X
\]
be the coarse moduli morphism, and let \(\cD_i^c\subset \cX\) be the reduced stacky divisor lying over \(D_i^c\). We write
\[
\cX^o:=\cX\setminus \pi^{-1}(D^p),\qquad X^o:=X\setminus D^p.
\]
The parabolic Higgs bundle \((E_*,\theta)\) corresponds, by the root-stack/parabolic correspondence \cite{Iy-Si}, to an orbifold Higgs bundle
\[
(\cE,\theta_{\cE})=(\cE^{1,0}\oplus \cE^{0,1},\theta_{\cE})
\]
on \(\cX\), with \(\cE^{0,1}=\mathcal O_{\cX}\). The local form of \(\cE^{1,0}\) is described in Section~\ref{sec:local-root}. In general \(\cE^{1,0}\) is not equal to \(\Omega^1_{\cX}(\log D^p)\); equality holds along \(D_i^c\) precisely when \(q_i=p_i-1\).

\subsection{Root-stack local conventions}\label{sec:local-root}

Fix the following local convention. Let \(x\in X\), and suppose that, near \(x\), the compact components of \(D^c\) passing through \(x\) are
\[
D_{i_1}^c,\dots,D_{i_s}^c,\qquad i_\nu\in I.
\]
Choose local coordinates \((z_1,\dots,z_n)\) such that
\[
D_{i_\nu}^c=\{z_\nu=0\}\quad(1\le \nu\le s).
\]
The root stack \(\cX\) has a local chart with coordinates \((w_1,
\dots,w_s,z_{s+1},\dots,z_n)\), where
\[
z_\nu=w_\nu^{p_{i_\nu}}\quad(1\le \nu\le s),
\]
and with stabilizer group \(\prod_{\nu=1}^s\mu_{p_{i_\nu}}\) acting by
\[
\zeta_\nu\cdot w_\nu=\zeta_\nu w_\nu.
\]

\begin{convention}\label{conv:root-frame}
Along \(D_i^c\), the parabolic weight \(q_i/p_i\) on the conormal line is represented on the root chart by the equivariant frame
\begin{equation}\label{eq:eta-root}
\eta_i:=w_i^{-q_i}dz_i.
\end{equation}
Since \(z_i=w_i^{p_i}\), this is
\begin{equation}\label{eq:eta-dw}
\eta_i=p_i w_i^{p_i-q_i-1}dw_i.
\end{equation}
\end{convention}

The frame \(\eta_i\) is a regular frame of the orbifold vector bundle \(\cE^{1,0}\) associated with the parabolic bundle. Formula~\eqref{eq:eta-dw} describes the natural morphism from this orbifold bundle to the ordinary cotangent bundle of the root chart.

\begin{proposition}\label{prop:E10-vs-cotangent}
There is a natural morphism
\[
\iota:\cE^{1,0}\longrightarrow \Omega^1_{\cX}(\log D^p)
\]
which is generically an isomorphism. Along \(\cD_i^c\), its determinant vanishes to order \(p_i-q_i-1\). In particular, \(\cE^{1,0}=\Omega^1_{\cX}(\log D^p)\) near \(\cD_i^c\) if and only if \(q_i=p_i-1\).
\end{proposition}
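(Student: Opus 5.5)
The plan is to construct \(\iota\) locally on root charts and then glue. Everything reduces to a computation in the coordinates of Convention~\ref{conv:root-frame}. Fix a point \(x\in X\) and a root chart with coordinates \((w_1,\dots,w_s,z_{s+1},\dots,z_n)\) as in Section~\ref{sec:local-root}. Arrange the remaining coordinates so that \(D^p\) is cut out by some of the \(z_j\) with \(j>s\); this is possible because \(D^p+D^c\) is simple normal crossing.

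\emph{Step 1: frames on both sides.} By the root-stack/parabolic correspondence \cite{Iy-Si}, the orbifold bundle \(\cE^{1,0}\) on the chart has the equivariant frame
\[
\eta_1,\dots,\eta_s,\ \epsilon_{s+1},\dots,\epsilon_n .
\]
Here \(\eta_\nu=w_\nu^{-q_{i_\nu}}dz_\nu\). For \(j>s\), \(\epsilon_j\) is \(dz_j\), or \(dz_j/z_j\) if \(z_j\) defines a component of \(D^p\). The justification is that the parabolic filtration along \(D_{i_\nu}^c\) has one non-trivial step, namely weight \(q_{i_\nu}/p_{i_\nu}\) on the conormal line spanned by \(dz_\nu\). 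The corresponding invariant-section description therefore multiplies exactly that frame vector by \(w_\nu^{-q_{i_\nu}}\) and leaves the others alone. On the other side, \(\Omega^1_{\cX}(\log D^p)\) on the chart has the frame \(dw_1,\dots,dw_s,\epsilon_{s+1},\dots,\epsilon_n\).

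\emph{Step 2: the map and its determinant.} Define \(\iota\) by sending each frame vector to the same differential form, computed on the chart. By \eqref{eq:eta-dw},
\[
\iota(\eta_\nu)=p_{i_\nu}w_\nu^{p_{i_\nu}-q_{i_\nu}-1}dw_\nu,
\]
and \(\iota(\epsilon_j)=\epsilon_j\). Since \(0\le q_i<p_i\), the exponent \(p_{i_\nu}-q_{i_\nu}-1\) is non-negative, so \(\iota\) is holomorphic. The matrix of \(\iota\) in these frames is diagonal, with determinant
\[
\prod_{\nu}p_{i_\nu}\,w_\nu^{p_{i_\nu}-q_{i_\nu}-1}.
\]
This determinant is invertible off \(\bigcup_\nu\{w_\nu=0\}\), so \(\iota\) is generically an isomorphism. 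It vanishes to order exactly \(p_i-q_i-1\) along \(\cD_i^c=\{w_i=0\}\). It is a unit near \(\cD_i^c\) if and only if \(p_i-q_i-1=0\), that is, \(q_i=p_i-1\), which gives the final equivalence.

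\emph{Step 3: equivariance and gluing, the main point to check.} The map \(\iota\) must be \(\prod\mu_{p_{i_\nu}}\)-equivariant and independent of the choice of coordinates. For equivariance, \(\zeta_\nu\) acts on \(w_\nu^{-q}dz_\nu\) by \(\zeta_\nu^{-q}\) and on \(w_\nu^{p-q-1}dw_\nu\) by \(\zeta_\nu^{p-q}=\zeta_\nu^{-q}\), so the characters agree. For independence, \(\iota\) is intrinsically the restriction map of meromorphic \(1\)-forms: both sheaves are subsheaves of the sheaf of meromorphic \(1\)-forms on the chart, and \(\iota\) is the inclusion \(\cE^{1,0}\subset\Omega^1_{\cX}(\log D^p)\). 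Because it is canonical, it agrees on overlaps and descends to the stack. Changing coordinates \(z_\nu\mapsto u z_\nu\) with \(u\) a unit multiplies both \(\eta_\nu\) and \(w_\nu^{p-q-1}dw_\nu\) by units, up to terms that lie in the span of the other frame vectors, so the order of vanishing of the determinant is unchanged. I expect the main care to be needed in identifying \(\cE^{1,0}\) with this explicit subsheaf. That means matching the convention of \cite{Iy-Si}, namely the sign of the weight and the choice of \(w^{-q}\) versus \(w^{p-q}\). Once that is fixed as in Convention~\ref{conv:root-frame}, the rest is the diagonal computation above.
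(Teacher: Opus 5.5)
Your proposal is correct and follows the same route as the paper. Both arguments do a local diagonal computation on the root chart: the frame \(\eta_i=w_i^{-q_i}dz_i=p_iw_i^{p_i-q_i-1}dw_i\) is sent to a multiple of \(dw_i\), while the weight-zero and logarithmic frames are sent to themselves; your Step~3, which realizes \(\iota\) as the inclusion of subsheaves of meromorphic forms so that equivariance and gluing are automatic, makes precise the word ``natural'' that the paper leaves implicit.
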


\begin{proof}
The assertion is local on the root chart. In the normal direction to \(D_i^c\), the bundle \(\cE^{1,0}\) has frame \(\eta_i\), while \(\Omega^1_{\cX}\) has frame \(dw_i\). By \eqref{eq:eta-dw}, the morphism \(\iota\) sends
\[
\eta_i\longmapsto p_i w_i^{p_i-q_i-1}dw_i.
\]
Thus its vanishing order in this normal direction is \(p_i-q_i-1\). The remaining non-orbifold directions have weight zero, and along \(D^p\) the frame is the usual logarithmic frame. Therefore the determinant of \(\iota\) vanishes along \(\cD_i^c\) with the asserted order. It is nowhere vanishing along \(\cD_i^c\) precisely when \(p_i-q_i-1=0\), equivalently \(q_i=p_i-1\).
\end{proof}

The root-stack Higgs field is obtained by composing the tautological section with \(\iota\):
\[
\theta_{\cE}:\cE^{1,0}\longrightarrow \cE^{0,1}\otimes \Omega^1_{\cX}(\log D^p).
\]
In the normal direction to \(D_i^c\), it sends the frame \(\eta_i\) to
\[
1\otimes p_i w_i^{p_i-q_i-1}dw_i.
\]
Thus the Higgs field is regular along \(D^c\), but it degenerates there unless \(q_i=p_i-1\).

\subsection{The orbifold Higgs field and induced metrics}

Around a point where the compact components are \(D_{i_1}^c,\ldots,D_{i_r}^c\) and the cusp components are \(P_1,\ldots,P_s\), choose coordinates such that
\[
D_{i_\nu}^c=\{z_\nu=0\}\quad(1\le \nu\le r),
\qquad
P_\mu=\{z_{r+\mu}=0\}\quad(1\le \mu\le s).
\]
Use the root chart \(z_\nu=w_\nu^{p_{i_\nu}}\) for \(1\le \nu\le r\). The \((1,0)\)-part of the orbifold bundle has local frames
\[
   \eta_\nu=w_\nu^{-q_{i_\nu}}dz_\nu
   =p_{i_\nu}\,w_\nu^{p_{i_\nu}-q_{i_\nu}-1}dw_\nu\quad(1\le \nu\le r),
\]
\[
   \eta_j=d\log z_j\quad(r+1\le j\le r+s),
   \qquad
   \eta_a=dz_a\quad(a>r+s),
\]
and \(\cE^{0,1}\) has frame \(e_0=1\). The orbifold Higgs field is
\[
   \theta_{\cE}(\eta_b)=e_0\otimes\iota(\eta_b),
   \qquad
   \theta_{\cE}(e_0)=0,
\]
where \(\iota:\cE^{1,0}\to\Omega^1_{\cX}(\log D^p)\) is the morphism of Proposition~\ref{prop:E10-vs-cotangent}. In particular, in a compact root direction it sends
\[
   \eta_\nu\longmapsto e_0\otimes
   p_{i_\nu}\,w_\nu^{p_{i_\nu}-q_{i_\nu}-1}dw_\nu.
\]
This formula is equivariant for the stabilizer action and therefore defines a logarithmic orbifold Higgs field on \((\cX,D^p)\). It is regular along the compact stacky divisor and has logarithmic poles only along \(D^p\).

These frames also describe adapted metrics. If \(h\) is an adapted harmonic metric for the associated filtered Higgs bundle on the coarse complement, then its pull-back to the root chart, written in the frames \(\eta_i,d\log z_j,dz_a,e_0\), is invariant under the finite stabilizer group and has the prescribed orbifold growth. Conversely, an orbifold adapted harmonic metric written in these frames gives the tame adapted metric on \(X\setminus(D^p\cup D^c)\). The precise smooth extension across the compact root divisors is recorded in Proposition~\ref{prop:root-stack-KH}. On the complement the change of frame is obtained by multiplying by the prescribed root powers, so the Hitchin--Simpson connection and its curvature transform by gauge conjugation.

\section{The Equality Case and the Period Map}\label{sec:equality-period}

\subsection{Mochizuki's correspondence in root-stack form}\label{sec:root-kh}

Mochizuki's theorem is stated for regular filtered Higgs bundles on a smooth complex manifold with a simple normal crossing divisor. For the root-stack formulation we use the following reduction.

\begin{proposition}\label{prop:root-stack-KH}
Let \((\mathcal F,\varphi)\) be a logarithmic orbifold Higgs bundle on \((\cX,D^p)\), and let \((F_*,\varphi)\) be the associated locally abelian regular filtered Higgs bundle on the smooth pair \((X,D^p+D^c)\). If \((F_*,\varphi)\) is \(\mu_L\)-polystable with trivial parabolic characteristic numbers, then \((\mathcal F,\varphi)|_{\cX^o}\) carries an adapted tame harmonic metric. In local root frames this metric is the pull-back of Mochizuki's adapted harmonic metric for \((F_*,\varphi)\), and it extends smoothly across the compact root divisors on every finite root chart. The only remaining tame logarithmic boundary is the inverse image of \(D^p\). The usual uniqueness statement for adapted harmonic metrics also holds in this root-stack form: on stable summands the metric is unique up to a positive constant, and on polystable isotypical summands the ambiguity is a constant Hermitian metric on the multiplicity space.
\end{proposition}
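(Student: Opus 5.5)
The plan is to deduce Proposition~\ref{prop:root-stack-KH} from Mochizuki's theorem on the coarse pair \((X,D^p+D^c)\), and then verify, one finite root chart at a time, that the metric we obtain behaves as an orbifold metric along the compact divisors.

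\textbf{Step 1: existence on the complement.} The filtered Higgs bundle \((F_*,\varphi)\) is regular, locally abelian, \(\mu_L\)-polystable, and has vanishing parabolic characteristic numbers. Mochizuki's correspondence \cite[Theorem~1.4]{Mochizuki2006} therefore gives a tame pluriharmonic metric \(h\) on \(X\setminus(D^p\cup D^c)\) that is adapted to the parabolic structure. It is unique up to the usual ambiguity on stable and isotypical summands.

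\textbf{Step 2: pull back to a root chart.} Take a root chart \(z_\nu=w_\nu^{p_{i_\nu}}\) with stabilizer \(\prod\mu_{p_{i_\nu}}\). The chart map is étale away from \(w_\nu=0\), so the pull-back of \(h\) is harmonic on the punctured chart. Since \(h\) is defined downstairs, the pull-back is invariant under the stabilizer.

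The root-stack/parabolic correspondence \cite{Iy-Si} identifies \(\mathcal F\) on the chart with the invariant extension whose frames are \(w_\nu^{-q}\) times the parabolic frames; this is the analogue of Convention~\ref{conv:root-frame}. Adaptedness of \(h\) means that a section of weight \(a\) has norm comparable to \(|z_\nu|^{a}\) up to logarithmic factors. In the twisted frame this gives norms comparable to \(|w_\nu|^{p a-q}\cdot|w_\nu|^{q}\), that is, bounded above and below up to powers of \(-\log|w_\nu|\).

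\textbf{Step 3: remove the logarithmic factors (main obstacle).} Mochizuki's norm estimates for tame harmonic bundles control these logarithmic factors through the weight filtration of the nilpotent part of the residue. So the task is to show that, pulled back to the chart, the Higgs field has no pole and the local monodromy around \(w_\nu=0\) is trivial. Once that holds, the harmonic bundle extends across the puncture.

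First I would check that the Higgs field is regular. The residues along \(D^c\) are semisimple and their eigenvalues are compatible with the parabolic weights (locally abelian). On the chart, \(dz/z=p\,dw/w\), and the twist by \(w^{-q}\) turns the residue of the pull-back into an endomorphism whose eigenvalues are integers shifted by the weights. Invariance then forces these eigenvalues to vanish. In the standard case this is exactly the computation of Section~\ref{sec:local-root}, where \(\theta_{\cE}\) is regular.

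Next I would control the monodromy. On \(D^c\) the parabolic weights are the rationals \(q/p\). Pulling back by the degree-\(p\) cover multiplies them to integers, which the twist then absorbs. The parabolic/flat weights are therefore trivial on the cover. Once the nilpotent part is zero, the norm estimates show that \(h\) and \(h^{-1}\) are bounded in the frame. Harmonic metrics solve an elliptic system, so removable-singularity arguments apply: a bounded harmonic bundle with regular Higgs field on a punctured polydisc extends smoothly, as in Simpson's bounded-metric regularity. This step is the delicate one. I would argue directly with Simpson's main estimate on the cover, which bounds \(|\theta|\), and then apply elliptic regularity to the Hermitian–Einstein/harmonic equation \(F_{D_h}+[\theta,\theta^\dagger_h]=0\).

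\textbf{Step 4: remaining boundary and uniqueness.} Near \(D^p\) nothing changes: there is no root along \(D^p\), so the tame logarithmic behavior is the one Mochizuki gives. For uniqueness, two orbifold adapted harmonic metrics descend to invariant metrics downstairs. These are adapted to \((F_*,\varphi)\), because the frame twist is a fixed equivariant gauge. Mochizuki's uniqueness then yields the stated ambiguity: a positive constant on stable summands, and a constant Hermitian metric on the multiplicity space of each isotypical summand.
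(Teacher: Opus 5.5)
Your outline has the same architecture as the paper's proof: Mochizuki on the coarse pair, pull-back to root charts, a two-sided bound $C^{-1}I\le H\le CI$ in the twisted frames, extension across $B=\{w_1\cdots w_r=0\}$, and uniqueness by descent. You are also more explicit than the paper about possible $(-\log|w|)$ factors; the paper reads the bound directly off adaptedness. However, your justification of the bound contains a false aside. Residues along $D^c$ need not be semisimple, and ``locally abelian'' constrains the filtration, not the residues. For example, take $\tilde e_1=e_1$ of weight $0$, $\tilde e_2=w^{-m}e_2$ of weight $m/p$, and the invariant regular field $\varphi(\tilde e_1)=w^{m-1}\tilde e_2\otimes dw$. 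The coarse residue is then the non-zero nilpotent map $e_1\mapsto p^{-1}e_2$. Regularity on root charts is also not something to check; it is part of the hypothesis that $(\mathcal F,\varphi)$ is logarithmic only along $D^p$. What you actually need, and what is true, is that this regularity forces the residue induced on $\Gr^F$ along $D^c$ to vanish. Then Mochizuki's norm estimates carry no logarithmic factors there.

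The genuine gap is the extension across $B$, which is exactly where the paper's proof does its work. ``Simpson's main estimate plus elliptic regularity'' does not get you there. The main estimate only bounds $|\theta|_h$, which is already bounded because $\varphi$ is holomorphic in the root frame and $H$ is bounded. Elliptic regularity can only be applied where $H$ is known to solve the equation, and on $V\setminus B$ you already have smoothness. Two ingredients are missing:
\begin{enumerate}
\item Boundedness of $H$ does not give $dH\in L^2$ near $B$. Without that, you cannot say $H$ solves the equation weakly on all of $V$.
\item Even a bounded $W^{1,2}$ weak solution of an elliptic system with quadratic gradient growth can be discontinuous in general. Continuity therefore has to come from the geometry of the target.
\end{enumerate}
The paper gets the first from a Caccioppoli inequality. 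It tests against $\chi^2\,(\nabla d_Q^2)\circ H$, where $d_Q$ is the distance to a centre $Q$ and $d_Q^2$ is convex on the non-positively curved space $\GL(N,\C)/U(N)$. It combines this with logarithmic cut-offs $\rho_\varepsilon$ satisfying $\int|d\rho_\varepsilon|^2\to0$. It gets the second from the Hildebrandt--Kaul--Widman continuity argument, with the Higgs commutator treated as a bounded forcing term, and then bootstraps. Your trivial-monodromy observation would let you work in a flat frame, where $H$ becomes a bounded harmonic map with no forcing term. Even then, the same removable-singularity theorem must still be proved or precisely cited.
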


\begin{proof}
The root-stack/parabolic correspondence identifies vector bundles on \(\cX\) with locally abelian parabolic bundles on \((X,D^c)\) with the prescribed denominators; the corresponding Chern characters and degrees agree \cite{Iy-Si}. Adding \(D^p\) only records the logarithmic boundary with trivial parabolic filtration. Hence \((\mathcal F,\varphi)\) is equivalently a regular filtered Higgs bundle on \((X,D^p+D^c)\).

Mochizuki's Kobayashi--Hitchin correspondence for tame harmonic bundles \cite[Theorem~1.4]{Mochizuki2006} gives an adapted pluri-harmonic metric on \(F|_{X\setminus(D^p\cup D^c)}\), unique up to the usual ambiguity. In a root chart, if \(e_a\) has weight \(m_{a,i}/p_i\) along \(D_i^c\), the corresponding orbifold frame is
\[
   \widetilde e_a=\prod_i w_i^{-m_{a,i}}\pi^*e_a.
\]
In these frames the singular powers prescribed by the parabolic weights are absorbed by the root coordinates. Thus the pulled-back metric and its inverse are locally bounded across the compact root divisors \(w_i=0\). The pulled-back Higgs field is smooth in the compact root directions. Logarithmic poles remain only along the inverse image of \(D^p\).

Next we prove that the pullback harmonic metric extends smoothly across the compact root divisors. Let \(U\) be a finite root chart with coordinates \((w,z,y)\), let
\[
   B=\{w_1\cdots w_r=0\}
\]
be the compact root divisor, and let \(U^\dagger=U\setminus D^p\). It is enough to prove smoothness on every relatively compact open set \(U_0\Subset U^\dagger\); thus the logarithmic boundary \(D^p\) plays no role in the following local argument. Choose a holomorphic frame of \(\mathcal F\) on \(U_0\). The metric is represented on \(U_0^*:=U_0\setminus B\) by a positive Hermitian matrix \(H\), and the adaptedness just recalled gives
\[
   C^{-1}I\le H\le CI.
\]
Write the Higgs field as \(\varphi=\sum_j A_j d\xi_j\) in the root coordinates \(\xi_j\). Since \(\varphi\) has no pole along \(B\), all matrices \(A_j\) are smooth and bounded on \(U_0\).

We now explain the removable regularity statement used below. In the frame chosen above the pluri-harmonic equation is the Hitchin--Simpson flatness equation. For local regularity it is enough to use its contraction with a fixed smooth background Hermitian metric on the coordinate ball; this contracted $(1,1)$-equation can be written locally as
\begin{equation}\label{eq:local-harmonic-metric-equation}
   \sum_\mu \partial_{\bar\xi_\mu}\bigl(H^{-1}\partial_{\xi_\mu}H\bigr)
   +
   \sum_\mu \bigl[A_\mu,H^{-1}A_\mu^{*}H\bigr]=0,
\end{equation}
where \(A_\mu^{*}\) denotes the adjoint with respect to the fixed background Hermitian metric of the chosen frame. This formula is to be understood on \(U_0^*\). After choosing normal coordinates on the symmetric space
\[
   \mathcal P_N:=\GL(N,\mathbb C)/U(N)
\]
centered at a point in the compact ball containing the image of \(H\), it has the form
\begin{equation}\label{eq:forced-harmonic-map-system}
   L H^a+\Gamma^a_{bc}(H)\,\langle dH^b,dH^c\rangle_g=F^a(\xi,H).
\end{equation}
Here \(L\) is the scalar Laplace operator of a fixed smooth background Hermitian metric \(g\) on the coordinate ball, \(\Gamma^a_{bc}\) are the Christoffel symbols of \(\mathcal P_N\), and the term \(F\) is obtained from the Higgs commutator in \eqref{eq:local-harmonic-metric-equation}. Since the root powers have been absorbed into the orbifold frame, the matrices \(A_\mu\) are smooth up to \(B\). Since \(H\) and \(H^{-1}\) are bounded, \(F\) is smooth in \(\xi\), smooth in \(H\), and uniformly bounded on \(U_0\times K\), where \(K\Subset\mathcal P_N\) is the compact geodesic ball containing \(H(U_0^*)\). When \(F=0\), \eqref{eq:forced-harmonic-map-system} is exactly the weak harmonic-map system considered by Hildebrandt--Kaul--Widman \cite[Theorem~3]{HildebrandtKaulWidman1977}. In our case the principal part is the same; the Higgs field contributes only the bounded zeroth-order forcing term \(F\).

We shall use the following local removable regularity fact, which is a minor forced variant of the proof of \cite[Theorem~3]{HildebrandtKaulWidman1977}.

\smallskip
\noindent\emph{Local removable regularity assertion.}
Let \(V\Subset U_0\) be a coordinate ball and let \(B\cap V\) be a normal crossing divisor. Let
\[
   u:V\setminus B\longrightarrow \mathcal P_N
\]
be a \(C^2\) solution of \eqref{eq:forced-harmonic-map-system} whose image lies in a compact regular geodesic ball \(K\Subset\mathcal P_N\). Assume that \(F\) is bounded on \(V\times K\). If \(u\) is bounded in \(K\), then \(u\) extends to a smooth solution on \(V\).

Indeed, first one obtains the Caccioppoli estimate. Let \(Q\in K\) be the center of a regular geodesic ball containing the image, let \(\mathcal r_Q^2\) be the squared distance to \(Q\), and test the weak form of \eqref{eq:forced-harmonic-map-system} on \(V\setminus B\) by \(\chi^2\nabla\mathcal r_Q^2(u)\). The Hessian comparison estimate used in \cite[Section~4]{HildebrandtKaulWidman1977} is valid here because \(\mathcal P_N\) is simply connected with non-positive sectional curvature, hence geodesic balls are regular. It gives the usual positive term controlling \(\chi^2|du|^2\). The forcing term contributes
\[
   \left|\int_{V\setminus B}\chi^2\langle F(\xi,u),\nabla\mathcal r_Q^2(u)\rangle\right|
   \le C\int_{V\setminus B}\chi^2,
\]
because \(F\) and \(\nabla\mathcal r_Q^2\) are bounded on \(V\times K\). The cross term is absorbed by Cauchy's inequality. Therefore
\begin{equation}\label{eq:forced-caccioppoli}
   \int_{V\setminus B}\chi^2|du|^2
   \le C\int_{V\setminus B}|d\chi|^2+C\int_{V\setminus B}\chi^2
\end{equation}
for all \(\chi\in C_c^\infty(V\setminus B)\).

Choose logarithmic cut-off functions \(\rho_\varepsilon\) for the divisor \(B\cap V\). For one component \(\{w=0\}\), \(\rho_\varepsilon=0\) on \(|w|\le\varepsilon^2\), \(\rho_\varepsilon=1\) on \(|w|\ge\varepsilon\), and \(\rho_\varepsilon\) is linear in \(\log |w|\) on the annulus \(\varepsilon^2<|w|<\varepsilon\); for a normal crossing divisor take the product over the components. Then
\[
   0\le\rho_\varepsilon\le1,
   \qquad \rho_\varepsilon\to1 \text{ on } V\setminus B,
   \qquad \int_V |d\rho_\varepsilon|^2\to0 .
\]
Putting \(\chi=\psi\rho_\varepsilon\) in \eqref{eq:forced-caccioppoli} and letting \(\varepsilon\to0\) gives \(u\in W^{1,2}_{\rm loc}(V)\). Testing the equation against \(\psi\rho_\varepsilon\Phi\), with \(\Phi\) an arbitrary compactly supported vector-valued test function in the normal coordinates of \(K\), and then letting \(\varepsilon\to0\), shows that \(u\) satisfies \eqref{eq:forced-harmonic-map-system} weakly on all of \(V\). The terms involving \(d\rho_\varepsilon\) vanish by Cauchy--Schwarz, the bound just obtained for \(du\), and \(\int |d\rho_\varepsilon|^2\to0\).

It remains to prove continuity of this weak solution. The proof of \cite[Theorem~3]{HildebrandtKaulWidman1977} applies with only harmless changes. More explicitly, in the Green-function estimates of \cite[Section~4]{HildebrandtKaulWidman1977}, the weak harmonic-map identity is tested against the Green kernel multiplied by the normal-coordinate vector field. The new forcing term is bounded by
\[
   C\int_{B_R(x_0)}G(x,x_0)\,dx,
\]
which is \(O(R^2)\) for real dimension at least three and \(O(R^2|\log R|)\) in real dimension two; in either case it tends to zero as \(R\to0\). The corresponding annular estimate used in their proof acquires the same vanishing error. Thus their oscillation estimate still gives
\[
   \lim_{R\to0}\operatorname*{ess\,osc}_{B_R(x_0)}u=0
\]
for every \(x_0\in V\). Hence \(u\) is continuous. Once continuity is known, the equation is a uniformly elliptic system with natural quadratic growth \(|du|^2\) and an additional bounded right-hand side. The regularity step invoked after \cite[Theorem~3]{HildebrandtKaulWidman1977}, namely the local quasilinear elliptic regularity for such systems, gives H\"older continuity of \(u\); then the standard elliptic bootstrap in the smooth target coordinates of \(K\) gives \(u\in C^\infty(V)\). This proves the local assertion.

We now apply the assertion to the harmonic metric \(H\). The two-sided estimate \(C^{-1}I\le H\le CI\) says exactly that the map \(H:U_0^*\to\mathcal P_N\) has image in a compact geodesic ball. The symmetric space \(\mathcal P_N\) has non-positive sectional curvature, so this ball is regular. The preceding local assertion therefore shows that \(H\) extends smoothly across \(B\) on every relatively compact coordinate ball \(V\Subset U_0\). Since \(V\) was arbitrary, the pulled-back metric extends smoothly across the compact root divisor on the root chart. The extension is unique, hence is invariant under the finite stabilizer group, and therefore defines a smooth orbifold harmonic metric on \(\cX^o\). The local gauge change by the root powers conjugates the Hitchin--Simpson connection on the complement, so harmonicity and flatness are unchanged by passing between the parabolic and root-stack descriptions.

Finally, uniqueness is inherited from Mochizuki's uniqueness statement. Indeed, two orbifold adapted harmonic metrics give, in root frames, two invariant adapted metrics for the same regular filtered Higgs bundle on the coarse complement. Mochizuki's uniqueness identifies them on stable summands up to positive constants, and in the polystable case up to constant Hermitian metrics on multiplicity spaces. Pulling this statement back to the root charts gives exactly the asserted orbifold uniqueness.
\end{proof}

\subsection{Flatness of the adjoint Higgs bundle}

Let
\[
r:=\rk(E)=n+1.
\]
The parabolic Chern characters of \(E_*\) agree with the orbifold Chern characters of \(\cE\) on \(\cX\). We use the notation \(\parch\) on \(X\) and \(\orbch\) on \(\cX\).

\begin{lemma}\label{lem:end-ch}
For an orbifold vector bundle \(F\) of rank \(r\),
\[
\orbch_1(\End F)=0,
\]
and
\begin{equation}\label{eq:ch2-end}
\orbch_2(\End F)=2r\,\orbch_2(F)-\orbch_1(F)^2.
\end{equation}
Consequently, equality \eqref{eq:BG} is equivalent to
\begin{equation}\label{eq:ch2-end-vanish}
\orbch_2(\End \cE)c_1(L)^{n-2}=0.
\end{equation}
The same equality holds for \(\End_0(\cE)\).
\end{lemma}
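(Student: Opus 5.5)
The plan is a Chern-root computation, carried out in orbifold (equivalently rational) cohomology of \(\cX\), followed by a normalization. Orbifold Chern characters are multiplicative and additive, which is a splitting-principle statement on the DM stack. Moreover \(\End F\simeq F\otimes F^\vee\). Hence
\[
\orbch(\End F)=\orbch(F)\,\orbch(F^\vee).
\]
Write \(\orbch(F)=r+c+s+\cdots\), with \(c=\orbch_1(F)\) and \(s=\orbch_2(F)\). Then \(\orbch(F^\vee)=r-c+s-\cdots\), since dualizing acts by \((-1)^k\) in degree \(k\). Expanding the product, the degree-one term is \(rc-rc=0\). The degree-two term is \(rs+rs-c^2=2rs-c^2\), which is \eqref{eq:ch2-end}.

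For the consequence I would set \(r=n+1\) and use the identification \(\orbch(\cE)=\parch(E_*)\) recalled just before the lemma, which relies on the Iyer--Simpson correspondence. This gives
\[
\orbch_2(\End\cE)=2(n+1)\parch_2(E_*)-\parch_1(E_*)^2=-2(n+1)\left(\frac{\parch_1(E_*)^2}{2(n+1)}-\parch_2(E_*)\right).
\]
Pairing with \(c_1(L)^{n-2}\), pulled back to \(\cX\) where degrees agree with those on \(X\), shows that the left side vanishes if and only if \eqref{eq:BG} holds.

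For \(\End_0\), I would use the splitting \(\End\cE=\End_0(\cE)\oplus\mathcal O_{\cX}\) given by the trace and the scalars. Since \(\orbch(\mathcal O_{\cX})=1\), the characters \(\orbch_1\) and \(\orbch_2\) of \(\End_0(\cE)\) coincide with those of \(\End\cE\).

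I expect no real obstacle. The only point needing care is justifying multiplicativity and the duality sign for orbifold Chern characters. I would do this either through the splitting principle on \(\cX\) or by passing to a finite Galois cover of \(X\) on which \(\cE\) becomes an honest bundle and dividing by the degree.
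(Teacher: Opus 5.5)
Your proof is correct and follows essentially the same route as the paper. The paper also expands $\ch(\End F)=\ch(F)\ch(F^\vee)$ to get the degree-two term $2r\ch_2(F)-c_1(F)^2$, rescales the Bogomolov--Gieseker expression by the nonzero factor $-2(n+1)$, and drops the trivial summand in $\End\cE=\End_0(\cE)\oplus\mathcal O_{\cX}$; your remarks on multiplicativity via the splitting principle or a Galois cover, and on identifying $\orbch$ with $\parch$, only spell out what the paper takes for granted.
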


\begin{proof}
The identity follows from
\[
\ch(\End F)=\ch(F)\ch(F^\vee).
\]
Writing
\[
\ch(F)=r+c_1(F)+\ch_2(F)+\cdots,
\qquad
\ch(F^\vee)=r-c_1(F)+\ch_2(F)+\cdots,
\]
the degree-one term cancels, and the degree-two term is
\[
r\ch_2(F)+r\ch_2(F)-c_1(F)^2.
\]
This gives \eqref{eq:ch2-end}. The equality
\[
\left(\frac{c_1(F)^2}{2r}-\ch_2(F)\right)c_1(L)^{n-2}=0
\]
is equivalent to
\[
\bigl(2r\ch_2(F)-c_1(F)^2\bigr)c_1(L)^{n-2}=0.
\]
Taking \(F=\cE\) gives \eqref{eq:ch2-end-vanish}. Finally,
\[
\End(\cE)=\End_0(\cE)\oplus\mathcal O_{\cX},
\]
and the trivial summand contributes no first or second Chern character.
\end{proof}

\begin{proposition}\label{prop:flat-adjoint}
Under the assumptions of Proposition~\ref{prop:branched-equality}, the logarithmic orbifold Higgs bundle
\[
\bigl(\End_0(\cE),\theta_{\End_0}\bigr)
\]
on \((\cX,D^p)\) is polystable with vanishing characteristic numbers. Hence it admits an adapted tame harmonic metric. Moreover, the parabolic Chern--Weil identity forces the Hitchin--Simpson connection of any adapted Hermitian--Einstein metric on this adjoint Higgs bundle, with the usual determinant normalization on stable summands, to be flat.
\end{proposition}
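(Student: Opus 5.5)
We argue in three steps: polystability of the adjoint bundle, existence of the harmonic metric, and then vanishing of the Hitchin--Simpson curvature via Chern--Weil.

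\textbf{Step 1: polystability with vanishing characteristic numbers.} By hypothesis \((E_*,\theta)\) is \(\mu_L\)-polystable. Through the root-stack/parabolic correspondence of \cite{Iy-Si}, \((\cE,\theta_{\cE})\) is then polystable on \((\cX,D^p)\) for the orbifold slope with respect to \(\pi^*L\). We deduce polystability of \(\End_0(\cE)=\End(\cE)/\mathcal O_{\cX}\) by the analytic route, using the reduction in Proposition~\ref{prop:root-stack-KH}:
\begin{enumerate}[label=\textup{(\roman*)}]
\item Mochizuki's theorem applies to a polystable filtered Higgs bundle whose \(\parch_1\cdot c_1(L)^{n-1}\) is arbitrary. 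It gives a Hermitian--Einstein (harmonic up to a central twist) adapted metric on \(\cE\).
\item The induced metric on \(\End_0(\cE)\) is then an adapted Hermitian--Einstein metric with zero Einstein constant.
\item Such a metric forces polystability of the tensor bundle; this is the standard Simpson/Mochizuki argument, and the parabolic tensor-product structure is compatible with root charts.
\end{enumerate}
For the characteristic numbers, Lemma~\ref{lem:end-ch} gives \(\orbch_1(\End_0\cE)=0\) and \(\orbch_2(\End_0\cE)c_1(L)^{n-2}=0\), the latter being equivalent to \eqref{eq:BG}. Proposition~\ref{prop:root-stack-KH}, applied to \((\End_0(\cE),\theta_{\End_0})\), then yields an adapted tame harmonic metric on \(\cX^o\) that is smooth across the compact root divisors.

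\textbf{Step 2: flatness via Chern--Weil.} Let \(h\) be the adapted metric and \(F\) the curvature of \(D_h=\partial_h+\bar\partial+\theta+\theta^{*}_h\). On \(\End_0\), the Hermitian--Einstein condition \(\Lambda F^{\perp}=0\) holds. Since the Einstein constant is zero, the trace part vanishes and \(F\) is trace-free. For such a connection, the pointwise Lübke-type identity gives
\[
\Bigl(\tfrac{1}{8\pi^2}\tr(F\wedge F)\Bigr)\wedge\omega^{n-2}=c_n\,|F|_h^2\,\omega^n, \qquad c_n>0,
\]
valid for Higgs curvature as in Simpson \cite{Simpson1988}, using that \(F^{1,1}\) is primitive and that the \((2,0)\) and \((0,2)\) parts come from \(\theta\wedge\theta\) and \(\partial_h\theta\).

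Integrating over \(\cX^o\) with \(\omega\) a Kähler form in \(c_1(L)\), it remains to identify the integral with the characteristic number. This is Mochizuki's parabolic Chern--Weil formula \cite{Mochizuki2006}: for adapted tame harmonic metrics,
\[
\int_{X\setminus D}\tr(F^2)\wedge\omega^{n-2}=8\pi^2\,\parch_2\cdot c_1(L)^{n-2}.
\]
On the root side, the smooth extension from Proposition~\ref{prop:root-stack-KH} together with the equality of parabolic and orbifold Chern characters shows that the compact root divisors contribute no extra boundary terms. Only \(D^p\) requires Mochizuki's asymptotic estimates. Combining with Step 1, \(\int|F|^2\omega^n=0\), so \(F\equiv0\) on \(\cX^o\).

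\textbf{Step 3: arbitrary adapted metrics.} By the uniqueness statement of Proposition~\ref{prop:root-stack-KH}, any other adapted Hermitian--Einstein metric, normalized on stable summands, differs from \(h\) by constants on multiplicity spaces. Such a change preserves the connection, so flatness holds for every such metric.

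\textbf{Main obstacle.} The delicate step is the Chern--Weil identification in the presence of the cusp divisor \(D^p\), when \(D^p\) is only assumed normal crossing. Here I would cite Mochizuki's result directly rather than reprove boundary decay. A second concern is the polystability of \(\End_0\) in the parabolic setting; I would argue it analytically through the induced harmonic metric, as in Step 1.
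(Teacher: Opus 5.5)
Your Steps 2--3 are essentially the paper's argument: the paper applies the parabolic Chern--Weil/L\"ubke identity to an arbitrary adapted Hermitian--Einstein metric on the adjoint bundle. The gap is in Step~1(i). That is the only place where you obtain polystability of \(\End_0(\cE)\), which is the hypothesis you need to invoke Proposition~\ref{prop:root-stack-KH}.

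Mochizuki's correspondence \cite[Theorem~1.4]{Mochizuki2006} concerns regular filtered Higgs bundles with \emph{trivial} characteristic numbers, in particular \(\pardeg_L=0\). It does not provide a Hermitian--Einstein metric adapted to \(E_*\) when the degree is non-zero. The Hermitian--Einstein metrics in its proof are built for \(\epsilon\)-perturbed parabolic structures and auxiliary Poincar\'e-type metrics, and they are not adapted to \(E_*\) itself. Here \(\parch_1(E_*)=c_1(K_X+\Delta)\), whose \(L\)-degree is non-zero in the situations at hand (for a ball quotient \(K_X+\Delta\) is big). So the cited theorem does not give the metric in (i), and (ii)--(iii) have no input.

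The paper avoids this by quoting that polystability of locally abelian parabolic Higgs bundles passes to duals, tensor products and direct summands. If you want your analytic route, your parenthetical ``harmonic up to a central twist'' points to the repair. The twist must be made with a parabolic line bundle \emph{before} invoking Mochizuki:
\begin{itemize}
\item take \(N\) divisible by all \(p_i\), \(A\) sufficiently ample, and a smooth \(B\in|(n+1)NA-N(K_X+\Delta)|\) transverse to \(D\);
\item let \(\mathcal L_*\) be \(\mathcal O_X(-A)\) with weight \(1/((n+1)N)\) along \(B\), so that \(\parch_1(\mathcal L_*)=-\frac{1}{n+1}\parch_1(E_*)\).
\end{itemize}
Then \(E_*\otimes\mathcal L_*\) is polystable with \(\parch_1=0\). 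Since the Bogomolov--Gieseker discriminant is invariant under twisting, \eqref{eq:BG} gives \(\parch_2(E_*\otimes\mathcal L_*)\cdot c_1(L)^{n-2}=0\). Mochizuki's theorem now applies honestly. The induced metric on \(\End_0(E_*\otimes\mathcal L_*)=\End_0(E_*)\), whose parabolic structure along \(B\) is trivial, is adapted and pluri-harmonic. This yields polystability of the adjoint bundle and, for that metric, flatness at the same time.

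Two smaller points. First, once Proposition~\ref{prop:root-stack-KH} gives you a pluri-harmonic metric, flatness of its Hitchin--Simpson connection holds by definition. Your Chern--Weil computation is what handles the clause about an arbitrary adapted Hermitian--Einstein metric, and since it only uses \(\Lambda F=0\), it applies to such a metric directly. Second, Step~3 is therefore unnecessary, and as written it is unjustified. The uniqueness in Proposition~\ref{prop:root-stack-KH} is for adapted harmonic metrics, not for metrics only known to be Hermitian--Einstein.
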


\begin{proof}
Polystability is preserved by duals, tensor products and direct summands for locally abelian parabolic Higgs bundles. Hence \(\End(\cE)=\cE\otimes\cE^\vee\) and its direct summand \(\End_0(\cE)\) are polystable.

By Lemma~\ref{lem:end-ch},
\[
\orbch_1(\End_0(\cE))=0,
\qquad
\orbch_2(\End_0(\cE))c_1(L)^{n-2}=0.
\]
Therefore the associated regular filtered Higgs bundle has trivial parabolic characteristic numbers. Proposition~\ref{prop:root-stack-KH} gives the existence of an adapted pluri-harmonic metric on \(\End_0(\cE)|_{\cX^o}\).

Let \(h\) be any adapted Hermitian--Einstein metric on this adjoint Higgs bundle, with the determinant normalized on the stable summands, and let
\[
\nabla_h=\partial_h+\bar\partial+\theta_{\End_0}+\theta_{\End_0,h}^{\dagger}
\]
be its Hitchin--Simpson connection. The parabolic Chern--Weil identity expresses the Bogomolov--Gieseker defect as a positive constant times the \(L^2\)-norm of the trace-free Hitchin--Simpson curvature. Since the defect vanishes and the bundle is trace-free adjoint, there is no residual central curvature term. Hence \(\nabla_h\) is flat. The resulting local system is semisimple by the harmonic-metric correspondence.
\end{proof}

\subsection{The principal \texorpdfstring{$\PU(n,1)$}{PU(n,1)} system and the period map}

Let \(V=V^{1,0}\oplus V^{0,1}\) be a complex vector space with
\[
\dim V^{1,0}=n,
\qquad
\dim V^{0,1}=1.
\]
Put
\[
G=\PGL(V),
\qquad
K=\mathrm P\bigl(\GL(V^{1,0})\times \GL(V^{0,1})\bigr).
\]
Let \(G_0=\PU(n,1)\) be the real form preserving a Hermitian form of signature \((n,1)\), and let
\[
K_0=\mathrm P(U(n)\times U(1))\simeq U(n)
\]
be its maximal compact subgroup. The Lie algebra \(\mathfrak g=\mathfrak{pgl}(V)\) has the Hodge decomposition
\[
\mathfrak g=\mathfrak g^{-1,1}\oplus \mathfrak g^{0,0}\oplus \mathfrak g^{1,-1},
\]
where
\[
\mathfrak g^{-1,1}=\Hom(V^{1,0},V^{0,1}),
\qquad
\mathfrak g^{1,-1}=\Hom(V^{0,1},V^{1,0}).
\]

Let \(P\) be the orbifold principal \(K\)-bundle of frames of
\[
   \cE=\cE^{1,0}\oplus\cE^{0,1}
\]
which preserve the Hodge decomposition, modulo the common scalar. Thus
\[
   P\times_K\mathfrak g^{-1,1}\simeq
   \Hom(\cE^{1,0},\cE^{0,1}).
\]
The Higgs field \(\theta_{\cE}\) has type \((-1,1)\), and therefore defines the principal Higgs field
\[
   \tau\in H^0\bigl(\cX,\Omega^1_{\cX}(\log D^p)\otimes P\times_K\mathfrak g^{-1,1}\bigr),
\]
or equivalently the Kodaira--Spencer morphism
\[
   \tau:T_{\cX}(-\log D^p)\longrightarrow P\times_K\mathfrak g^{-1,1}.
\]
On the associated adjoint bundle \(P\times_K\mathfrak g\), the vector-bundle Higgs field induced by the principal Higgs field \(\tau\) is
\[
   \ad(\tau):P\times_K\mathfrak g
   \longrightarrow (P\times_K\mathfrak g)\otimes\Omega^1_{\cX}(\log D^p),
   \qquad a\longmapsto [\tau,a].
\]
Thus \(\tau\) is the Higgs field of the principal system, while \(\ad(\tau)\) denotes only its induced adjoint Higgs field.

\begin{lemma}\label{lem:finite-adjoint-reconstruction}
Let \(V=V^{1,0}\oplus V^{0,1}\), with \(\dim V^{1,0}=n\) and \(\dim V^{0,1}=1\). Put
\[
   G=\PGL(V),\qquad
   K=\mathrm P\bigl(\GL(V^{1,0})\times\GL(V^{0,1})\bigr),
   \qquad \mathfrak g=\mathfrak{pgl}(V),
\]
and let
\[
   \mathfrak g=\mathfrak g^{-1,1}\oplus\mathfrak g^{0,0}\oplus\mathfrak g^{1,-1}
\]
be the Hodge decomposition induced by \(V^{1,0}\oplus V^{0,1}\). Fix a Hermitian form \(S_0\) of signature \((n,1)\), positive on \(V^{1,0}\) and negative on \(V^{0,1}\), and let \(h^0_{\mathfrak g}\) be the induced positive Hodge metric on \(\mathfrak g\). Then the stabilizer of \(h^0_{\mathfrak g}\) in \(K\) is
\[
   K_0=\mathrm P\bigl(U(V^{1,0},S_0)\times U(V^{0,1},-S_0)\bigr).
\]
Moreover, if \(H\) is a positive Hermitian metric on \(\mathfrak g\) satisfying the following three conditions:
\begin{enumerate}[label=\textup{(\roman*)}]
\item the three Hodge summands are mutually orthogonal for \(H\);
\item \(H\) is compatible with the Killing form, so that \(\mathfrak g^{-1,1}\) and \(\mathfrak g^{1,-1}\) are dual through it;
\item the metric on \(\mathfrak g^{0,0}\) is the one determined, via the bracket action, by the induced actions on \(\mathfrak g^{-1,1}\oplus\mathfrak g^{1,-1}\),
\end{enumerate}
then \(H\) lies in the \(K\)-orbit of \(h^0_{\mathfrak g}\). Consequently the space of such metrics is the homogeneous space
\[
   K\cdot h^0_{\mathfrak g}\simeq K/K_0.
\]
\end{lemma}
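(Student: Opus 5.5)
The plan is to reduce everything to the summand \(\mathfrak g^{-1,1}\). I will show that a metric \(H\) satisfying (i)--(iii) is determined by its restriction \(H|_{\mathfrak g^{-1,1}}\), and that this restriction can be an arbitrary Hermitian metric on \(\mathfrak g^{-1,1}\). Since \(\dim V^{0,1}=1\), such metrics are exactly those induced by a Hermitian metric on \(V^{1,0}\) together with one on \(V^{0,1}\). Both the transitivity statement and the stabilizer computation then become linear algebra for \(\GL(V^{1,0})\times\GL(V^{0,1})\) modulo scalars.

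\emph{Step 1 (reduction to \(\mathfrak g^{-1,1}\)).} By (i) we have \(H=H_{-}\oplus H_0\oplus H_{+}\) on \(\mathfrak g^{-1,1}\oplus\mathfrak g^{0,0}\oplus\mathfrak g^{1,-1}\). Condition (ii) identifies \(\mathfrak g^{1,-1}\) with the dual of \(\mathfrak g^{-1,1}\) through the Killing form, so \(H_{+}\) is the dual metric of \(H_{-}\). Condition (iii) says \(H_0\) is the metric on \(\mathfrak g^{0,0}\) induced by the faithful bracket action of \(\mathfrak g^{0,0}\) on \(\mathfrak g^{-1,1}\oplus\mathfrak g^{1,-1}\), for instance the Hilbert--Schmidt norm of \(\ad(a)\) computed with \(H_-\oplus H_+\). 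Faithfulness holds because \(\mathfrak g^{0,0}=\mathfrak{p}(\mathfrak{gl}(V^{1,0})\oplus\mathfrak{gl}(V^{0,1}))\) acts on \(\Hom(V^{1,0},V^{0,1})\) by \(X\mapsto dX-XA\), and this vanishes identically only for scalars. Each of these constructions is \(K\)-equivariant, so the map \(H\mapsto H_-\) is an injective \(K\)-equivariant map. Its image consists of metrics \(H_-\) for which the reconstructed \(H\) satisfies (i)--(iii); by construction that is every Hermitian metric on \(\mathfrak g^{-1,1}\). The same recipe applied to \(h^0_{\mathfrak g}\) returns \(h^0_{\mathfrak g}\), because the Hodge metric built from \(S_0\) is of this form.

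\emph{Step 2 (transitivity).} We have \(\mathfrak g^{-1,1}\simeq (V^{1,0})^\vee\otimes V^{0,1}\), with \(V^{0,1}\) a line. Hence every Hermitian metric on \(\mathfrak g^{-1,1}\) has the form \(h_1^\vee\otimes h_2\), where \(h_1\) is a metric on \(V^{1,0}\) and \(h_2\) is a metric on \(V^{0,1}\). The pair \((h_1,h_2)\) is unique up to simultaneous rescaling, which is exactly the scalar quotient in \(K\). Now \(\GL(V^{1,0})\) acts transitively on metrics on \(V^{1,0}\), and \(\GL(V^{0,1})=\C^*\) acts transitively on metrics on the line. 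Thus \(K\) acts transitively on Hermitian metrics on \(\mathfrak g^{-1,1}\), and by Step 1 every admissible \(H\) lies in \(K\cdot h^0_{\mathfrak g}\).

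\emph{Step 3 (stabilizer).} Take \(k=[(A,c)]\in K\) with \(k\cdot h^0_{\mathfrak g}=h^0_{\mathfrak g}\). By Step 1, it suffices that \(k\) preserves \(h^0_{\mathfrak g}|_{\mathfrak g^{-1,1}}=(S_0|_{V^{1,0}})^\vee\otimes(-S_0|_{V^{0,1}})\), on which \(k\) acts by \(X\mapsto cXA^{-1}\). Rescaling the representative, we may take \(|c|=1\). Then \(A\) must preserve the dual of \(S_0|_{V^{1,0}}\), so \(A\in U(V^{1,0},S_0)\) and \(c\in U(V^{0,1},-S_0)\). Hence the stabilizer is \(K_0\); the reverse inclusion \(K_0\subset\operatorname{Stab}(h^0_{\mathfrak g})\) is immediate. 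The orbit–stabilizer theorem then gives \(K\cdot h^0_{\mathfrak g}\simeq K/K_0\).

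The main obstacle is Step 1, specifically giving condition (iii) a precise, \(K\)-equivariant meaning. One has to check that the induced metric on \(\mathfrak g^{0,0}\) is well defined on the quotient by scalars and really is a function of \(H_-\) alone. The cleanest route I would try first is to define \(H_0(a,a):=\operatorname{tr}_{H_-}\bigl(\ad(a)|_{\mathfrak g^{-1,1}}^{*}\ad(a)|_{\mathfrak g^{-1,1}}\bigr)\). I would then verify directly that for \(H=h^0_{\mathfrak g}\) this reproduces the Hodge metric up to the standard normalization.
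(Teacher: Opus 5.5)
Your proposal is correct and is essentially the paper's argument: (i)--(iii) reduce $H$ to its restriction to $\mathfrak g^{-1,1}=\Hom(V^{1,0},V^{0,1})$, and since $V^{0,1}$ is a line that restriction is the Hom metric of a pair $(h_W,h_L)$ determined up to a common scalar; transitivity of $K$ and the stabilizer $K_0$ then follow, and your Step~3 actually verifies the stabilizer, which the paper only asserts. On the obstacle you flag: for $a=[(A,d)]$ your Hilbert--Schmidt recipe gives $H_0(a,a)=\|B\|^2_{h_W}$ with $B=A-d\,\Id_{V^{1,0}}$, which is exactly the paper's metric on $\mathfrak g^{0,0}$ (choose the representative with zero $\End(V^{0,1})$-component and use $h_W$ on $\End(V^{1,0})$), so the check is immediate for the paper's $h^0_{\mathfrak g}$; but it would fail if $h^0_{\mathfrak g}$ were the restriction of the Hodge metric of $\End(V)$ to $\mathfrak{sl}(V)\cong\mathfrak{pgl}(V)$, which gives $\|B\|^2-|\tr B|^2/(n+1)$ and is not proportional to $\|B\|^2$ for $n\ge2$.
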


\begin{proof}
Write \(W=V^{1,0}\) and \(L=V^{0,1}\). Since \(\dim L=1\), a point of \(K\) is represented by a pair \((A,\lambda)\in \GL(W)\times\GL(L)\), modulo simultaneous scalar multiplication. A pair of positive Hermitian metrics \((h_W,h_L)\) on \((W,L)\), again modulo simultaneous scalar multiplication, determines a positive Hodge metric on \(\mathfrak g\) as follows. On
\[
   \mathfrak g^{-1,1}=\Hom(W,L),\qquad
   \mathfrak g^{1,-1}=\Hom(L,W),
\]
it is the usual Hom metric induced by \(h_W\) and \(h_L\), and the two summands are dual through the Killing form. On
\[
   \mathfrak g^{0,0}=\bigl(\End(W)\oplus\End(L)\bigr)/\C\cdot \Id_V
\]
it is the metric induced by the Hermitian metric on \(\End(W)\) defined by \(h_W\); the \(\End(L)\)-factor is scalar and disappears after quotienting by the common scalar. This construction is exactly the \(K\)-orbit of \(h^0_{\mathfrak g}\), and the stabilizer of the model metric is \(K_0\). Hence the orbit is \(K/K_0\).

It remains to see that the conditions in the statement force a metric to be of this form. The restriction of \(H\) to \(\mathfrak g^{-1,1}=W^\vee\otimes L\) is a positive Hermitian metric on \(W^\vee\otimes L\). Choose a positive metric \(h_L\) on the one-dimensional space \(L\). Then the formula
\[
   h_W^\vee(\alpha,\beta)
   :=\frac{H_{-1,1}(\alpha\otimes\ell,\beta\otimes\ell)}{h_L(\ell,\ell)}
\]
for any non-zero \(\ell\in L\) defines a positive Hermitian metric \(h_W^\vee\) on \(W^\vee\), hence a metric \(h_W\) on \(W\). Changing \(h_L\) by a positive scalar changes \(h_W\) by the inverse scalar, so the pair \((h_W,h_L)\) is well-defined modulo simultaneous scalar multiplication. Condition \textup{(ii)} forces the metric on \(\mathfrak g^{1,-1}\) to be the Hom metric induced by the same pair. Condition \textup{(iii)} then forces the metric on \(\mathfrak g^{0,0}\) to be the one induced by \(h_W\), because the bracket identifies \(\mathfrak g^{0,0}\) with the trace-free endomorphisms acting on \(W\). Hence \(H\) is the metric associated with \((h_W,h_L)\), and therefore belongs to \(K\cdot h^0_{\mathfrak g}\).
\end{proof}

\begin{lemma}\label{lem:adjoint-hodge-tensor}
For the above \(P\) and \(\tau\), put
\[
   \overline{\mathcal A}:=P\times_K\mathfrak g,
   \qquad
   \mathcal A:=\overline{\mathcal A}|_{\cX^o},
   \qquad
   \theta_{\mathcal A}:=\ad(\tau)|_{\cX^o}.
\]
Let \(h\) be an adapted pluri-harmonic metric on \((\mathcal A,\theta_{\mathcal A})\) whose Hitchin--Simpson connection is flat. After replacing \(h\) within the usual uniqueness ambiguity on the polystable isotypical summands, \(h\) is induced by a smooth adapted \(K_0\)-reduction
\[
   P_H\subset P|_{\cX^o},
   \qquad K_0=\mathrm P(U(n)\times U(1)).
\]
For this reduction, the adjoint connection of the principal \(G_0=\PU(n,1)\)-connection
\[
   \nabla^{P_H}=\nabla^c_{P_H}+\tau+\tau^{\dagger}_{P_H}
\]
is the Hitchin--Simpson connection of \((\mathcal A,\theta_{\mathcal A},h)\).
\end{lemma}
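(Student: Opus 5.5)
The plan is to show that the harmonic metric \(h\) is, pointwise, a Hodge metric of the type described in Lemma~\ref{lem:finite-adjoint-reconstruction}. That lemma then turns \(h\) into a section of \(P\times_K(K/K_0)\), which is the same thing as a \(K_0\)-reduction \(P_H\). Finally we compare the two connections directly.

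\textbf{Step 1: grading-compatibility.} Let \(\Phi\) be the grading endomorphism of \(\mathcal A\) that acts by \(j\) on the summand \(P\times_K\mathfrak g^{j,-j}\). This is \(\ad\) of the central element of \(\mathfrak k\) defining the Hodge decomposition. Since \(\tau\) has type \((-1,1)\), we have \([\Phi,\theta_{\mathcal A}]=-\theta_{\mathcal A}\). Therefore \(e^{i t\Phi}\) is a holomorphic automorphism taking \((\mathcal A,\theta_{\mathcal A})\) to \((\mathcal A,e^{-it}\theta_{\mathcal A})\). Rescaling the Higgs field by a unit scalar preserves pluri-harmonicity and adaptedness. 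Hence \((e^{it\Phi})^*h\) is again an adapted pluri-harmonic metric for \((\mathcal A,\theta_{\mathcal A})\).

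By the uniqueness part of Proposition~\ref{prop:root-stack-KH}, \((e^{it\Phi})^*h\) differs from \(h\) only by a constant metric on the multiplicity spaces of the isotypical summands. As \(t\) ranges over \(\mathbb R/2\pi\mathbb Z\), this gives a compact circle action on the finite-dimensional space of such ambiguities. Averaging over the circle, or equivalently taking the fixed point in that nonpositively curved space, we replace \(h\) by a \(\Phi\)-invariant adapted harmonic metric. This metric satisfies condition~(i) of Lemma~\ref{lem:finite-adjoint-reconstruction}.

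\textbf{Step 2: conditions (ii) and (iii).} The bracket \(\mathcal A\otimes\mathcal A\to\mathcal A\) and the Killing form \(\mathcal A\otimes\mathcal A\to\mathcal O\) are holomorphic and \(\theta_{\mathcal A}\)-compatible, because \(\theta_{\mathcal A}=\ad\tau\) acts as a derivation. They are therefore Higgs morphisms between polystable Higgs bundles of degree zero. Since \(h\) is harmonic, these tensors are parallel for the flat Hitchin--Simpson connection.

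One consequence is that the Killing form is \(h\)-parallel, so the Cartan-type involution it determines together with \(h\) is parallel. Applying the same uniqueness-and-averaging argument once more to the finite group generated by this involution, we arrange that \(h\) is Killing-compatible, which is condition~(ii).

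Parallelism of the bracket, which is a Lie algebra structure, then means that \(\mathfrak g^{0,0}\) acts by \(h\)-skew-adjoint-compatible derivations. This gives condition~(iii).

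Lemma~\ref{lem:finite-adjoint-reconstruction} now shows that at each point \(h\) lies in the \(K\)-orbit of \(h^0_{\mathfrak g}\). So \(h\) defines a smooth section of \(P\times_K K/K_0\), that is, a reduction \(P_H\). Adaptedness of \(P_H\) is inherited from adaptedness of \(h\).

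\textbf{Step 3: identification of connections.} Let \(\nabla^c_{P_H}\) be the Chern connection of the reduction \(P_H\). Its adjoint is the Chern connection \(\partial_h+\bar\partial\) of \(\mathcal A\), because the holomorphic structure on \(\mathcal A\) is induced from \(P\). Next, \(\tau^\dagger_{P_H}\) is the conjugate of \(\tau\) under the Cartan involution of \(\mathfrak g\) attached to \(\PU(n,1)\) and \(P_H\). Its adjoint is \(\theta^\dagger_{\mathcal A,h}\), since \(\ad\) intertwines the Cartan involution with the \(h\)-adjoint (up to the sign fixing the noncompact real form). Summing these, \(\ad\nabla^{P_H}\) is the Hitchin--Simpson connection of \((\mathcal A,\theta_{\mathcal A},h)\). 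Because \(\ad:\mathfrak g\to\mathfrak{gl}(\mathfrak g)\) is injective, flatness also transfers to \(\nabla^{P_H}\).

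\textbf{Main obstacle.} The hard part is Steps 1--2: showing that the uniqueness ambiguity can be chosen compatibly with the grading and with the Killing form. I would treat it by a fixed-point argument. The ambiguity group is a finite-dimensional symmetric space of nonpositive curvature, and the compact group generated by the circle action and the involution acts on it isometrically, so it has a fixed point. The remaining care is needed for the sign conventions in the Cartan involution of \(\PU(n,1)\).
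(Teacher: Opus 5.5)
Your outline follows the paper's route: circle averaging within the uniqueness ambiguity to make $h$ Hodge, parallelism of the Lie-algebraic Higgs tensors, Lemma~\ref{lem:finite-adjoint-reconstruction} applied fibrewise to obtain a section of $P|_{\cX^o}\times_K(K/K_0)$, then comparison of connections. The gap is in the step that carries the real content, namely fibrewise membership of $h$ in $K\cdot h^0_{\mathfrak g}$, i.e.\ your derivation of (iii); you do not actually argue it.

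Parallelism of the bracket for the flat connection is a differential condition and by itself constrains nothing pointwise. If $\theta_{\mathcal A}$ were zero, every constant metric on the trivial flat bundle with fibre $\mathfrak g$ would be harmonic with parallel bracket and Killing form. Hodge-orthogonality together with Killing duality would still not force (iii).

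The missing input is the nondegeneracy of the Kodaira--Spencer map. The argument runs as follows.
\begin{enumerate}
\item Because $h$ is Hodge and the bracket has Hodge degree zero, $\nabla_h[\cdot,\cdot]=0$ splits into Hodge-degree components; the paper records this splitting.
\item The degree $+1$ component says that each $(\ad\tau(v))^{\dagger_h}$ is a derivation of $\mathfrak g$, hence equals $\ad y$ for some $y\in\mathfrak g^{1,-1}$.
\item Away from $\cD^c$ the map $\tau$ is fibrewise onto $\mathfrak g^{-1,1}$. So $x\mapsto y$ is an injective conjugate-linear map $\mathfrak g^{-1,1}\to\mathfrak g^{1,-1}$ between spaces of equal dimension, hence bijective.
\item Since $\mathfrak g^{-1,1}\oplus\mathfrak g^{1,-1}$ generates $\mathfrak g$, the subalgebra $\ad\mathfrak g$ is $\dagger_h$-stable. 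Hence $h$ is invariant under a compact real form $\mathfrak u'$.
\item Irreducibility of the adjoint representation makes $h$ a positive multiple of $-B(\cdot,\overline{\,\cdot\,}^{\mathfrak u'})$.
\item Hodge-orthogonality puts $\sqrt{-1}$ times the grading element into $\mathfrak u'$. So $\mathfrak u'=\mathfrak{su}(V,h_W\oplus h_L)$ for metrics $h_W,h_L$ on $V^{1,0},V^{0,1}$.
\item The Killing normalization makes the multiple equal to $1$. Since all these conditions are closed, the conclusion extends across $\cD^c$.
\end{enumerate}
Your sentence about $\mathfrak g^{0,0}$ acting by skew-adjoint-compatible derivations does not replace this.

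Your explicit Killing-duality normalization is a genuine improvement on the paper, which leaves it implicit. Since $\Ad$ preserves $B$, every metric in $K\cdot h^0_{\mathfrak g}$ is $B$-self-dual, while $c\,h^0_{\mathfrak g}$ is not for $c\neq1$; so even for stable $\mathcal A$ a normalization is needed. It must be formulated correctly, though:
\begin{itemize}
\item The involution to use is $h\mapsto B^{*}h^{\vee}$ on the space of adapted harmonic metrics, where $B\colon\mathcal A\to\mathcal A^{\vee}$ is the Higgs isomorphism given by the Killing form.
\item It commutes with the circle action because that action preserves $B$, so Cartan's fixed-point theorem for $U(1)\times\Z/2$ applies. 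Plain averaging does not work here, since this involution is not affine.
\item The pointwise conjugate-linear map determined by $h$ and $B$ squares to the identity exactly when (ii) already holds. So \emph{the finite group generated by this involution} is circular as you wrote it.
\end{itemize}
Finally, on the non-compact $\cX^o$, passing from a Higgs tensor to a parallel tensor needs the Bochner identity with Poincar\'e cut-offs, together with the boundedness of $|\phi|_h$ supplied by adaptedness. The paper proves this step; you only assert it.
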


\begin{proof}
The metric \(h\) is smooth on finite root charts away from the inverse image of \(D^p\), by Proposition~\ref{prop:root-stack-KH}, and remains tame along \(D^p\). We write its flat Hitchin--Simpson connection as
\[
   \nabla_h=\partial_h+\bar\partial+\theta_{\mathcal A}+\theta_{\mathcal A,h}^{\dagger}.
\]

We first record the functoriality statement used below. Let \((F,\theta_F,h_F)\) and \((G,\theta_G,h_G)\) be two adapted pluri-harmonic bundles on \(\cX^o\) arising from polystable logarithmic orbifold Higgs bundles on \((\cX,D^p)\) with trivial characteristic numbers. Let
\[
   \phi:F\longrightarrow G
\]
be a holomorphic Higgs morphism which extends across \(D^p\) as a morphism of the corresponding logarithmic orbifold Higgs bundles. Then \(\phi\) is parallel for the induced flat connection on \(\Hom(F,G)\). Indeed, on
\[
   H:=\Hom(F,G),
   \qquad
   \theta_H(u)=\theta_G\circ u-(u\otimes 1)\circ\theta_F,
\]
the induced metric \(h_H\) is pluri-harmonic, and the section \(\phi\) satisfies
\[
   \bar\partial_H\phi=0,
   \qquad
   \theta_H(\phi)=0.
\]
The Bochner identity for a harmonic bundle gives, on \(\cX^o\),
\[
   \sqrt{-1}\Lambda\partial\bar\partial |\phi|_{h_H}^2
   = |(\partial_{h_H}+\theta_{H,h_H}^{\dagger})\phi|_{h_H}^2.
\]
Choose a complete Poincar\'e type K\"ahler metric on \(\cX^o\) and the standard cut-off functions near \(D^p\). Since \(\phi\) extends as a logarithmic orbifold morphism and the metrics are adapted, \(|\phi|_{h_H}\) is bounded; for the tensorial morphisms used below this boundedness is immediate from the fact that the tensors have weight zero in adapted frames. Multiplying the preceding identity by the square of a cut-off function \(\chi_\varepsilon\) and integrating by parts gives
\[
   \int_{\cX^o}\chi_\varepsilon^2
   |(\partial_{h_H}+\theta_{H,h_H}^{\dagger})\phi|_{h_H}^2
   \leq
   C\int_{\operatorname{supp} d\chi_\varepsilon}|d\chi_\varepsilon|^2 |\phi|_{h_H}^2.
\]
The right hand side tends to zero by the usual Poincar\'e cut-off estimate. Hence
\[
   (\partial_{h_H}+\theta_{H,h_H}^{\dagger})\phi=0.
\]
Together with \(\bar\partial_H\phi=0\) and \(\theta_H(\phi)=0\), this says precisely that \(\phi\) is flat. The same proof applies to morphisms between any tensor constructions of such bundles, equipped with the tensor product and dual metrics.

We first show that \(h\) may be chosen to be a Hodge metric. The adjoint Higgs bundle is a system of Hodge bundles
\[
   \overline{\mathcal A}
   =\overline{\mathcal A}^{-1,1}\oplus
     \overline{\mathcal A}^{0,0}\oplus
     \overline{\mathcal A}^{1,-1},
   \qquad
   \theta_{\mathcal A}:\mathcal A^{p,-p}\longrightarrow
   \mathcal A^{p-1,-p+1}\otimes\Omega^1_{\cX}(\log D^p)|_{\cX^o}.
\]
For \(t\in U(1)\), let \(u_t\) act on \(\overline{\mathcal A}^{p,-p}\) by multiplication by \(t^{-p}\). Then \(u_t\) is an isomorphism from \((\overline{\mathcal A},\theta_{\mathcal A})\) to \((\overline{\mathcal A},t\theta_{\mathcal A})\). Since \(|t|=1\), a pluri-harmonic metric for \(\theta_{\mathcal A}\) is also a pluri-harmonic metric for \(t\theta_{\mathcal A}\). Thus \(u_t^*h\) is again an adapted pluri-harmonic metric for \((\mathcal A,\theta_{\mathcal A})\). By the uniqueness assertion in Proposition~\ref{prop:root-stack-KH}, on each stable summand the metric is unique up to a positive constant, and on an isotypical summand \(S\otimes M\) the ambiguity is exactly a constant Hermitian metric on the multiplicity space \(M\). Averaging these finite-dimensional multiplicity metrics over the compact group \(U(1)\), and leaving the stable metrics fixed, gives another adapted pluri-harmonic metric. Replacing \(h\) by this normalized metric, we may assume that \(u_t\) is unitary for every \(t\in U(1)\). Therefore the three Hodge summands are mutually orthogonal for \(h\).

We now impose the algebraic tensors defining the adjoint group. We first fix the notation. If
\(W\) is a finite-dimensional vector space, by a tensor construction in \(W\) we mean a finite direct sum of spaces of the form
\[
   W^{\otimes a}\otimes (W^\vee)^{\otimes b}.
\]
Thus the Lie bracket of \(\mathfrak g\) is a point of
\[
   \Hom(\wedge^2\mathfrak g,\mathfrak g)
   \subset \mathfrak g\otimes(\mathfrak g^\vee)^{\otimes 2},
\]
and the Killing form is a point of \(\operatorname{Sym}^2\mathfrak g^\vee\). We choose finitely many tensor constructions \(T_\alpha(\mathfrak g)\) and tensors
\[
   \mathbf t_\alpha\in T_\alpha(\mathfrak g)
\]
with the following property:
\[
   \Ad(G)=\{A\in\GL(\mathfrak g)\mid A\cdot \mathbf t_\alpha=\mathbf t_\alpha
   \text{ for every }\alpha\},
   \qquad G=\PGL(V).
\]
The list is chosen to contain the Lie bracket and the Killing form. The existence of such a finite list follows from Chevalley's theorem in its standard Tannakian form: for an algebraic subgroup \(H\subset\GL(W)\), there is a tensor construction \(T(W)\) and a line \(\ell\subset T(W)\) such that \(H\) is the stabilizer of \(\ell\). Applied to \(H=\Ad(G)\subset\GL(\mathfrak g)\), and replacing the line by a finite set of defining tensors, this gives the displayed description of \(\Ad(G)\); see, for example, \cite[Proposition~2.20]{DeligneMilne1982}.

Since the tensors \(\mathbf t_\alpha\) are \(G\)-invariant, they define global tensor sections
\[
   \mathbf t_{\alpha,\mathcal A}\in H^0\bigl(\cX^o,T_\alpha(\mathcal A)\bigr).
\]
These sections are holomorphic Higgs tensors in the following sense. Locally the Higgs field on \(T_\alpha(\mathcal A)\) is induced by the infinitesimal action of \(\tau\in\mathfrak g^{-1,1}\) on the tensor construction \(T_\alpha(\mathfrak g)\). Since \(\mathbf t_\alpha\) is fixed by \(G\), every element of \(\mathfrak g\) acts trivially on it. Hence
\[
   \theta_{T_\alpha(\mathcal A)}(\mathbf t_{\alpha,\mathcal A})=0.
\]
Equivalently, \(1\mapsto \mathbf t_{\alpha,\mathcal A}\) is a Higgs morphism from the trivial harmonic bundle to \(T_\alpha(\mathcal A)\). The functoriality statement above therefore gives
\[
   \nabla_{T_\alpha(h)}\mathbf t_{\alpha,\mathcal A}=0.
\]
Thus the parallel transport of \(\nabla_h\) preserves all defining tensors. In model frames it lies in the common stabilizer of the \(\mathbf t_\alpha\), namely in \(\Ad(G)\). Consequently the flat frame bundle of \((\mathcal A,\nabla_h)\) reduces to \(\Ad(G)\). Together with the Hodge decomposition of \(\mathcal A\), this recovers the original holomorphic \(K\)-reduction \(P|_{\cX^o}\).

We now pass from the adjoint metric to a \(K_0\)-reduction of \(P|_{\cX^o}\). Choose the model Hermitian form \(S_0\) and the model adjoint Hodge metric \(h^0_{\mathfrak g}\) as in Lemma~\ref{lem:finite-adjoint-reconstruction}. That lemma identifies the relevant finite-dimensional space of adjoint Hodge metrics with
\[
   K\cdot h^0_{\mathfrak g}\simeq K/K_0.
\]

For a point \(x\in\cX^o\) and a Hodge frame \(p\in P_x\), pull back the adjoint metric \(h_x\) to the model fiber:
\[
   H_p:=p^*h_x\in \operatorname{Herm}^+(\mathfrak g).
\]
If \(p\) is replaced by \(p\cdot k\), \(k\in K\), then \(H_p\) is replaced by \(k^*H_p\). The tensors \(\mathbf t_{\alpha,\mathcal A}\) have Hodge degree zero. Since they are parallel for \(\nabla_h=(\partial_h+\bar\partial)+\theta_{\mathcal A}+\theta_{\mathcal A,h}^{\dagger}\), and since \(h\) is Hodge, the different Hodge-degree components of \(\nabla_h\mathbf t_{\alpha,\mathcal A}=0\) vanish separately. In particular, the Chern connection \(\partial_h+\bar\partial\) preserves the Hodge decomposition and the defining tensor sections. Thus, in each Hodge frame, the pulled-back metric \(H_p\) is a positive adjoint Hodge metric compatible with the defining tensors, hence belongs to the finite-dimensional orbit \(K\cdot h^0_{\mathfrak g}\simeq K/K_0\). Consequently the rule
\[
   x\longmapsto [H_p]\in K/K_0
\]
defines, independently of the local choice of \(p\), a smooth section of the associated bundle
\[
   P|_{\cX^o}\times_K (K/K_0).
\]
The standard equivalence between reductions of structure group and sections of associated homogeneous bundles gives a smooth \(K_0\)-reduction
\[
   P_H\subset P|_{\cX^o}.
\]
Equivalently, its fiber is
\[
   (P_H)_x=\bigl\{p\in P_x\mid p^*h_x=h^0_{\mathfrak g}\bigr\}.
\]
The preceding pointwise transitivity says that this set is non-empty; the ambiguity is precisely right multiplication by \(K_0\). Since \(h\) is smooth, these fibers glue to a smooth adapted principal \(K_0\)-bundle. By construction, the adjoint metric induced by \(P_H\) is exactly \(h\).

The model Hodge metric also fixes the conjugation of \(\mathfrak g\) whose fixed Lie algebra is \(\mathfrak{pu}(n,1)\). Since \(P_H\) consists of frames preserving the model metric and the Hodge decomposition, this conjugation is transported to the bundle, and \(\tau+\tau^{\dagger}_{P_H}\) is \(\mathfrak g_0\)-valued.

Let \(\nabla^c_{P_H}\) be the Chern connection of the holomorphic \(K\)-bundle \(P|_{\cX^o}\) with respect to the reduction \(P_H\). On the associated adjoint bundle, this is the Chern connection \(\partial_h+\bar\partial\). The adjoint \(\tau^{\dagger}_{P_H}\) is the \(\mathfrak g^{1,-1}\)-part determined by the same Hodge metric. Hence the adjoint connection of
\[
   \nabla^{P_H}:=\nabla^c_{P_H}+\tau+\tau^{\dagger}_{P_H}
\]
on \(P_H\times_{K_0}G_0\), where \(G_0=\PU(n,1)\), is
\[
   \partial_h+\bar\partial+\theta_{\mathcal A}+\theta_{\mathcal A,h}^{\dagger}=\nabla_h.
\]
This proves the assertion.
\end{proof}

\begin{proposition}\label{prop:principal-system}
Under the hypotheses of Proposition~\ref{prop:branched-equality}, the above principal \(K\)-Higgs system \((P,\tau)\) underlies a flat principal \(G_0=\PU(n,1)\)-variation of Hodge structure on \(\cX^o\). The associated \(K_0\)-reduction is adapted. Its Kodaira--Spencer morphism is
\[
\tau:T_{\cX}(-\log D^p)\longrightarrow P\times_K\mathfrak g^{-1,1},
\]
where \(P\) is the orbifold principal \(K\)-bundle of Hodge frames of \(\cE\). Under the identification
\[
P\times_K\mathfrak g^{-1,1}\simeq \Hom(\cE^{1,0},\cE^{0,1}),
\]
the map \(\tau\) is dual to the morphism
\[
\iota:\cE^{1,0}\longrightarrow \Omega^1_{\cX}(\log D^p)
\]
of Proposition~\ref{prop:E10-vs-cotangent}.
\end{proposition}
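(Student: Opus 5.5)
The proof assembles the results of this section in order. First, Proposition~\ref{prop:flat-adjoint} applies under the hypotheses of Proposition~\ref{prop:branched-equality}. It shows that \(\bigl(\End_0(\cE),\theta_{\End_0}\bigr)\) carries an adapted tame harmonic metric \(h\) whose Hitchin--Simpson connection is flat. We then identify this adjoint Higgs bundle with \((\mathcal A,\theta_{\mathcal A})=(P\times_K\mathfrak g,\ad(\tau))|_{\cX^o}\). The identification \(\End_0(\cE)\simeq P\times_K\mathfrak{pgl}(V)\) is tautological, since \(P\) is the bundle of Hodge frames modulo scalars and \(\mathfrak{pgl}(V)\simeq\mathfrak{sl}(V)\) as \(K\)-modules. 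Under it the induced Higgs field \(u\mapsto[\theta_{\cE},u]\) corresponds to \(\ad(\tau)\), because \(\theta_{\cE}\) is exactly the \((-1,1)\)-component defining \(\tau\).

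Next, Lemma~\ref{lem:adjoint-hodge-tensor} applies, after the \(U(1)\)-averaging normalization of \(h\) carried out there. It produces a smooth adapted \(K_0\)-reduction \(P_H\subset P|_{\cX^o}\) such that the adjoint connection of \(\nabla^{P_H}=\nabla^c_{P_H}+\tau+\tau^\dagger_{P_H}\) equals the flat connection \(\nabla_h\). It remains to upgrade flatness from the adjoint bundle to the principal \(G_0\)-connection. The curvature of \(\nabla^{P_H}\) is a \(\mathfrak g_0\)-valued \(2\)-form, and its image under \(\ad\) is the curvature of \(\nabla_h\), which vanishes. Since \(\mathfrak{pu}(n,1)\) is simple, \(\ad:\mathfrak g_0\to\mathfrak{gl}(\mathfrak g)\) is injective, so the principal curvature vanishes.

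Hence \((P_H\times_{K_0}G_0,\nabla^{P_H})\) is a flat principal \(\PU(n,1)\)-bundle on \(\cX^o\). Its Hodge decomposition is preserved by the Chern part, and \(\tau\) has type \((-1,1)\), so it has the Griffiths-transversality shape of a principal variation of Hodge structure. Equivalently, the flat structure together with the reduction \(P_H\) gives an equivariant map to \(G_0/K_0\). Adaptedness of the reduction comes directly from adaptedness of \(h\), via the construction of \(P_H\) in Lemma~\ref{lem:adjoint-hodge-tensor}.

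For the Kodaira--Spencer statement, \(\tau\) is by definition the section of \(\Omega^1_{\cX}(\log D^p)\otimes P\times_K\mathfrak g^{-1,1}\) given by \(\theta_{\cE}\), and \(P\times_K\mathfrak g^{-1,1}=\Hom(\cE^{1,0},\cE^{0,1})\) with \(\cE^{0,1}=\mathcal O_{\cX}\). Contracting with a log vector field \(v\) gives \(\tau(v)(\eta)=\langle \iota(\eta),v\rangle\). This follows from \(\theta_{\cE}(\eta)=e_0\otimes\iota(\eta)\), the formula established after Proposition~\ref{prop:E10-vs-cotangent}. So \(\tau:T_{\cX}(-\log D^p)\to (\cE^{1,0})^\vee\) is the transpose \(\iota^\vee\). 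This is checked in the local root frames \(\eta_\nu,d\log z_j,dz_a\), and the formula is equivariant, so it globalizes.

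The main obstacle is the passage from the adjoint system to the principal one: making sure the metric on \(\mathcal A\) really is induced by a \(K_0\)-reduction compatible with the holomorphic \(K\)-structure, and not by some other Hodge metric. This has already been handled in Lemma~\ref{lem:adjoint-hodge-tensor}. What remains delicate here is only the injectivity of \(\ad\) on \(\mathfrak g_0\), which holds because \(\mathfrak{pu}(n,1)\) is centerless, and the consistency of identifications on root charts. For the latter, the stabilizer action fixes \(\eta_i\) equivariantly, so \(P\), \(\tau\) and \(\iota\) are all orbifold objects and the local comparison descends.
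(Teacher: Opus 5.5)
Your proposal is correct and follows essentially the same route as the paper's proof. You identify \((P\times_K\mathfrak g,\ad(\tau))\) with \((\End_0(\cE),\theta_{\End_0})\), apply Proposition~\ref{prop:flat-adjoint} and Lemma~\ref{lem:adjoint-hodge-tensor} to obtain the adapted \(K_0\)-reduction whose adjoint connection is flat, deduce flatness of the principal connection from injectivity of the adjoint representation, and read off \(\tau=\iota^\vee\) from \(\theta_{\cE}(\eta)=e_0\otimes\iota(\eta)\). The only difference is cosmetic: you phrase the flatness step as injectivity of \(\ad\) on the centerless Lie algebra \(\mathfrak{pu}(n,1)\), where the curvature lives, while the paper invokes faithfulness of the adjoint representation of \(\PU(n,1)\), and these amount to the same thing.
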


\begin{proof}
The principal bundle \(P\) and the principal Higgs field \(\tau\) were constructed above from the Hodge decomposition of \(\cE\). The identification
\[
P\times_K\mathfrak g^{-1,1}\simeq \Hom(\cE^{1,0},\cE^{0,1})
\]
identifies the Higgs field with contraction against the image of \(\iota\). Thus \(\tau\) is dual to \(\iota\).

Under the adjoint representation of \(G\) on \(\mathfrak g\), the principal Higgs field \(\tau\) induces the adjoint Higgs bundle
\[
\bigl(P\times_K\mathfrak g,\ad(\tau)\bigr),
\qquad
\ad(\tau)(A)=[\tau,A].
\]
The natural identification
\[
P\times_K\mathfrak g\simeq \End_0(\cE)
\]
identifies \(\ad(\tau)\) with the commutator Higgs field
\[
\theta_{\End_0}(A)=[\theta_{\cE},A].
\]
Thus \((P\times_K\mathfrak g,\ad(\tau))\) is exactly the adjoint Higgs bundle considered in Proposition~\ref{prop:flat-adjoint}.

By Proposition~\ref{prop:flat-adjoint}, the adjoint Higgs bundle just identified has an adapted pluri-harmonic metric \(h\) on \(\cX^o\) whose Hitchin--Simpson connection is flat. Applying Lemma~\ref{lem:adjoint-hodge-tensor} to this metric gives an adapted \(K_0\)-reduction \(P_H\subset P|_{\cX^o}\), and the principal connection on \(P_H\times_{K_0}\PU(n,1)\) has adjoint connection equal to the flat Hitchin--Simpson connection of \((P\times_K\mathfrak g,\ad(\tau),h)\). Since \(\PU(n,1)\) is an adjoint group, the adjoint representation is faithful; therefore the principal connection itself is flat. Hence \((P,\tau,P_H)\) is a principal \(\PU(n,1)\)-variation of Hodge structure on \(\cX^o\).
\end{proof}

\begin{proposition}\label{prop:period}
The principal variation of Proposition~\ref{prop:principal-system} gives a \(\rho\)-equivariant holomorphic period map
\[
\mathcal{P}:\widetilde{\cX^o}\longrightarrow G_0/K_0=\PU(n,1)/U(n)\simeq\bB^n.
\]
Its differential is the Kodaira--Spencer morphism \(\tau\). It is generically a local biholomorphism, and its ramification is exactly the degeneracy of \(\iota\) along \(\cD^c\).
\end{proposition}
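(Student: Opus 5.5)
We would prove Proposition~\ref{prop:period} by the standard construction of a period map from a flat principal variation, followed by a local computation of its differential in the root frames of Section~\ref{sec:local-root}.

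\emph{Step 1: construct \(\rho\) and \(\mathcal P\).} By Proposition~\ref{prop:principal-system}, \(P_H\times_{K_0}G_0\) carries a flat \(G_0\)-connection \(\nabla^{P_H}\) on \(\cX^o\). Pass to the orbifold universal cover \(\widetilde{\cX^o}\), which is represented by a simply connected manifold once the stabilizers act trivially; we check this last point at the end. Trivialize the flat \(G_0\)-bundle there by parallel transport from a base point. The holonomy of this trivialization is \(\rho:\pi_1^{\rm orb}(\cX^o)\to G_0\). In the flat trivialization the reduction \(P_H\) becomes a map \(\widetilde{\cX^o}\to G_0/K_0\), and this map is \(\rho\)-equivariant by construction.

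\emph{Step 2: holomorphicity and the differential.} We use the Hodge decomposition \(\mathfrak g=\mathfrak g^{-1,1}\oplus\mathfrak g^{0,0}\oplus\mathfrak g^{1,-1}\). The tangent space of \(G_0/K_0\) is \(\mathfrak g_0/\mathfrak k_0\simeq\mathfrak g^{-1,1}\), with the holomorphic structure of \(\bB^n\subset G/Q\) given by the open embedding into the compact dual. The differential of the reduction, computed in the flat frame, is the \(\mathfrak g_0/\mathfrak k_0\)-component of the connection form, namely \(\tau+\tau^\dagger_{P_H}\). Its \((1,0)\)-part is \(\tau\), which has type \((-1,1)\) and is holomorphic. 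The \((0,1)\)-part, \(\tau^\dagger\), is the conjugate of \(\tau\), so \(d\mathcal P\) is complex linear. Hence \(\mathcal P\) is holomorphic and \(d\mathcal P=\tau:T_{\cX}(-\log D^p)\to P\times_K\mathfrak g^{-1,1}\) on \(\cX^o\).

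\emph{Step 3: ramification.} By Proposition~\ref{prop:principal-system}, \(\tau\) is dual to \(\iota\). Proposition~\ref{prop:E10-vs-cotangent} shows that \(\iota\) is an isomorphism off \(\cD^c\). Therefore \(d\mathcal P\) is invertible there, and \(\mathcal P\) is a local biholomorphism generically. On a root chart the matrix of \(\tau\) in the frames \(\eta_\nu\) and \(dw_\nu\) is diagonal in the normal directions, with entries \(p_{i_\nu}w_\nu^{p_{i_\nu}-q_{i_\nu}-1}\). Consequently \(\det d\mathcal P\) vanishes exactly to order \(p_i-q_i-1\) along \(\cD_i^c\). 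This identifies the ramification divisor with the degeneracy divisor of \(\iota\).

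\emph{Main obstacle.} The delicate step is making Step 1 and Step 2 valid across the stacky divisor \(\cD^c\). The metric of Proposition~\ref{prop:root-stack-KH} is smooth on root charts, so the reduction and the flat connection extend smoothly there, and the local form of \(d\mathcal P\) holds at \(w=0\). What remains is to show that the stabilizer \(\mu_{p_i}\) maps injectively under \(\rho\). We would take a small loop around \(w_i=0\) in the chart. Equivariance of \(\mathcal P\) combined with the local normal form \(\mathcal P_i\sim w_i^{p_i-q_i}\) shows that the generator acts on the normal ball coordinate by \(\zeta\mapsto\zeta^{p_i-q_i}\). Since \(\gcd(p_i-q_i,p_i)=1\), this action is nontrivial of order exactly \(p_i\). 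Injectivity of the stabilizers into \(\PU(n,1)\) then ensures that the covering of \(\cX^o\) determined by \(\ker\rho\) is a manifold on which \(\mathcal P\) is defined.
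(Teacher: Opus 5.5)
Your proposal is correct and follows the same route as the paper: the flat principal \(\PU(n,1)\)-variation gives the equivariant period map, its differential is the Higgs field \(\tau\), and the generic local biholomorphicity and the ramification are read off from the duality of \(\tau\) with \(\iota\) via Proposition~\ref{prop:E10-vs-cotangent}. The stabilizer-injectivity check in your last paragraph is not needed for this statement, since the period map is defined on the orbifold universal cover in any case; the paper proves that injectivity separately in Proposition~\ref{prop:cover-manifold}, and at points where several compact components meet it uses the simultaneous eigencoordinates of Lemma~\ref{lem:crossing-normal-form} rather than a loop around a single divisor.
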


\begin{proof}
A principal \(G_0\)-variation of Hodge structure with Hodge reduction to \(K_0\) defines a horizontal period map from the orbifold universal cover to the symmetric space \(G_0/K_0\). The differential of this map is the Higgs field, equivalently the Kodaira--Spencer morphism
\[
\tau:T_{\cX}(-\log D^p)\longrightarrow P\times_K\mathfrak g^{-1,1}.
\]
By Proposition~\ref{prop:principal-system}, \(\tau\) is dual to \(\iota\). Proposition~\ref{prop:E10-vs-cotangent} shows that \(\iota\) is generically an isomorphism and that its only divisorial degeneracy is along \(\cD^c\). Therefore \(\mathcal{P}\) is generically locally biholomorphic and is ramified precisely along the corresponding lifts of \(\cD_i^c\), with the order computed in Section~\ref{sec:branching}.
\end{proof}

\section{Local Geometry of the Period Map}\label{sec:local-geometry}

\subsection{Smoothness of the cusp divisor}

The equality case also forces the cusp divisor to be smooth. The input is Mochizuki's constancy theorem \cite[Theorem 7.1]{Mochizuki2002} for the residues of logarithmic \(\lambda\)-connections, applied at \(\lambda=0\).

\begin{theorem}\label{thm:mochizuki-cone}
Let \((F,\theta_F,h)\) be a tame nilpotent harmonic bundle with trivial parabolic structure on
\[
(\Delta^n)^*=\Delta^n\setminus \bigcup_{i=1}^{\ell}\{z_i=0\}.
\]
Let \(N_i^{(0)}=\Res_{\{z_i=0\}}(\theta_F)\) be the Higgs residue. For every
\[
a=(a_1,\dots,a_\ell)\in\R_{>0}^{\ell},
\]
put
\[
N^{(0)}(a)=\sum_{i=1}^{\ell}a_i\,N_i^{(0)}.
\]
Then the centered nilpotent weight filtration
\[
W\bigl(N^{(0)}(a)\bigr)
\]
is independent of \(a\in\R_{>0}^{\ell}\).
\end{theorem}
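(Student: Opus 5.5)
This statement is a form of Mochizuki's result \cite[Theorem~7.1]{Mochizuki2002}. We plan to derive it from the limiting mixed Hodge/twistor structure attached to the tame harmonic bundle rather than reprove the asymptotics.

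\emph{Step 1: reduction to the $\lambda$-connection family.} Use Simpson--Mochizuki to embed $(F,\theta_F,h)$ in the family of $\lambda$-flat bundles $(\mathcal F^\lambda,\mathbb D^\lambda)$. Because the parabolic structure is trivial and the bundle is nilpotent, the residues $N_i^{(\lambda)}=\Res_{z_i=0}\mathbb D^\lambda$ are nilpotent and commute. They also vary holomorphically in $\lambda$, with $N_i^{(0)}$ the Higgs residue. Restricting the prolongation to the deepest stratum $\{z_1=\dots=z_\ell=0\}$ produces a vector bundle over the $\lambda$-line carrying commuting nilpotent endomorphisms $N_i^{(\lambda)}$.

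\emph{Step 2: independence at $\lambda=1$.} At $\lambda=1$ we have a flat bundle with unipotent monodromy. The nilpotent orbit theorem for tame harmonic bundles (Mochizuki's extension of Cattani--Kaplan--Schmid) says that $\sum a_iN_i^{(1)}$, with $a_i>0$, generates a nilpotent orbit of polarized mixed twistor structures. The key input is the Cattani--Kaplan theorem: for a polarized nilpotent orbit, the monodromy weight filtration of $\sum a_iN_i$ is constant on the open cone $\R^\ell_{>0}$.

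\emph{Step 3: transfer to $\lambda=0$.} This is the step we expect to be the main obstacle. Mochizuki's theorem that the limiting object is a polarized mixed twistor structure shows that the filtrations $W(N^{(\lambda)}(a))$ glue into a filtration by holomorphic subbundles over the $\lambda$-line. The pieces of this filtration are strict, i.e.\ their ranks are constant in $\lambda$. Given this, take $a,b\in\R^\ell_{>0}$. The filtrations $W(N^{(\lambda)}(a))$ and $W(N^{(\lambda)}(b))$ are two holomorphic subbundle filtrations over the $\lambda$-line, and they agree at $\lambda=1$ by Step 2. After twisting by $\mathbb C^*$, they also agree for every $\lambda\neq0$, since $N^{(\lambda)}$ is conjugate to $\lambda$ times a fixed nilpotent there. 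Agreement on the dense set $\lambda\neq0$ forces equality at $\lambda=0$ by continuity of subbundles. The point to check carefully is that the ranks of the graded pieces of $W(N^{(0)}(a))$ equal the generic ones, so that no jump occurs at $\lambda=0$. This follows from Mochizuki's purity statement: the $\mathrm{Gr}^W$ are pure polarized twistor structures, hence their fibers at $0$ and at $1$ have equal dimension. Since $W$ of a nilpotent is determined by the Jordan type of its $\mathrm{Gr}$, the filtrations $W(N^{(0)}(a))$ and $W(N^{(0)}(b))$ coincide.
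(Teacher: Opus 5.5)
The paper gives no proof of this statement. It quotes Mochizuki's constancy theorem for the residues of the $\lambda$-connections \cite[Theorem~7.1]{Mochizuki2002} and specializes to $\lambda=0$. Your derivation does not reduce it to anything weaker, because Step~2 is not available as stated.

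\textbf{Step 2.} The Cattani--Kaplan theorem concerns polarized nilpotent orbits of Hodge filtrations, i.e.\ degenerations of polarized variations of Hodge structure. At $\lambda=1$ a general tame nilpotent harmonic bundle gives only a flat bundle with unipotent local monodromy, together with the harmonic metric. There is no Hodge filtration, so Cattani--Kaplan does not apply.

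Nor is constancy a formal property of commuting nilpotents. In the proof of Proposition~\ref{prop:Dp-smooth}, $\ad N_1$ and $\ad N_2$ commute, yet $W(\ad(N_1+tN_2))$ depends on $t$; and $\exp(\ad N_1),\exp(\ad N_2)$ are commuting unipotents, hence the monodromies of some flat bundle on $(\Delta^*)^2$. The missing input is analytic and comes from the harmonic metric. Its twistor formulation (nilpotent orbits of polarized mixed twistor structures with a single weight filtration on the cone) is Mochizuki's theorem itself. The several-variable limiting mixed twistor structure you invoke in Step~3 is also built on one filtration $W(N(a))$ for all $a$ in the cone. So Step~2 is circular. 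Moreover, if it were granted, it would already give $\lambda=0$ and make Step~3 unnecessary.

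\textbf{Step 3.} This step contains a false claim: for $\lambda\neq0$ the residues are not, in general, conjugate to $\lambda$ times fixed nilpotents. The flat connections $\lambda^{-1}\mathbb D^\lambda=\partial_h+\bar\partial+\lambda^{-1}\theta_F+\lambda\theta_F^\dagger$ are not gauge equivalent for different $\lambda$. They are only when the harmonic bundle is a variation of Hodge structure, where the Hodge grading supplies the gauge. Also, what you need is simultaneous conjugacy of the whole tuple $(N^{(\lambda)}_1,\dots,N^{(\lambda)}_\ell)$, not conjugacy of one operator.

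This part can be repaired. For $|\lambda|=1$, the flat bundle at $\lambda$ is the one at $1$ of the tame nilpotent harmonic bundle $(F,\lambda^{-1}\theta_F,h)$. So a $\lambda=1$ statement valid for all such bundles gives agreement of the two filtrations on the unit circle, and the identity theorem for holomorphic subbundles then gives agreement at $\lambda=0$. But this repair still rests on Step~2.

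In the case the paper actually uses, the adjoint of a system of Hodge bundles is a polarized variation of Hodge structure, and both defects disappear. There your scheme can be made to work, as can a Deligne-splitting argument identifying $N_i^{(0)}$ with $\Gr_F N_i^{(1)}$. For the theorem as stated, you need Mochizuki's asymptotic analysis.
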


We shall use the following elementary weight-filtration calculation. If \(L\) is nilpotent and \(L^3=0\), then the centered monodromy weight filtration satisfies
\[
   W_{-2}(L)=\im L^2.
\]
Indeed, this is immediate from the Jordan normal form: only Jordan blocks of length three contribute to \(W_{-2}\), and on such a block the lowest weight space is exactly the image of \(L^2\).

\begin{proposition}\label{prop:Dp-smooth}
No two irreducible components of \(D^p\) meet. Equivalently, \(D^p\) is a disjoint union of smooth irreducible divisors.
\end{proposition}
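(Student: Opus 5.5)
We argue by contradiction. Suppose two components \(P_1,P_2\) of \(D^p\) meet at a point \(x\). Choose a small polydisc around a general point of \(P_1\cap P_2\), away from \(D^c\) and from the other components of \(D^p\). Use coordinates with \(P_1=\{z_1=0\}\) and \(P_2=\{z_2=0\}\). On this polydisc the root stack is an ordinary manifold, the parabolic structure is trivial, and \(\cE=\Omega^1_X(\log D^p)\oplus\mathcal O_X\). By Propositions~\ref{prop:flat-adjoint} and \ref{prop:principal-system}, the adjoint bundle \(\End_0(\cE)\) with its flat Hitchin--Simpson connection and adapted harmonic metric is a tame harmonic bundle on the punctured polydisc, with trivial parabolic structure along \(D^p\). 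Its Higgs residues are \(N_j^{(0)}=\ad(\Res_{P_j}\theta_{\cE})\).

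We compute these residues in the frames \(d\log z_1, d\log z_2, dz_3,\dots,dz_n, e_0\). Let \(\theta_{\cE}\) send \(\eta\) to \(e_0\otimes\eta\). Then \(R_j:=\Res_{P_j}\theta_{\cE}\) is the rank-one nilpotent endomorphism \(d\log z_j\mapsto e_0\), which kills all other frame vectors. These satisfy \(R_j^2=0\) and \(R_1R_2=R_2R_1=0\), so the residues are nilpotent and commute. This justifies applying Theorem~\ref{thm:mochizuki-cone} to \(\End_0(\cE)\) with \(\ell=2\). The adjoint residues \(L(a)=\ad(a_1R_1+a_2R_2)\) satisfy \(L(a)^3=0\), because \(R(a):=a_1R_1+a_2R_2\) squares to zero.

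Next we compute \(W_{-2}(L(a))=\im L(a)^2\), using the elementary calculation recorded before the statement. For an endomorphism \(A\) we have \(\ad(R)^2A=R^2A-2RAR+AR^2=-2RAR\). Hence
\[
\im L(a)^2=\{R(a)AR(a)\}=\Hom\bigl(V/\ker R(a),\,\im R(a)\bigr)\text{-type maps}.
\]
Concretely, this is the line of endomorphisms \(v\mapsto \lambda\,\phi_a(v)\,e_0\), where \(\phi_a\) is the functional on \(V\) dual to the covector \(a_1(d\log z_1)^*+a_2(d\log z_2)^*\). The image of \(R(a)\) is always \(\C e_0\), but the coimage line depends on the ratio \(a_1:a_2\). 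So \(W_{-2}(L(a))\) is the line spanned by \(e_0\otimes(a_1\epsilon_1+a_2\epsilon_2)\), where \(\epsilon_j\) is the dual basis vector of \(d\log z_j\). These lines are distinct for non-proportional \(a\in\R^2_{>0}\). This contradicts the independence asserted by Theorem~\ref{thm:mochizuki-cone}, so \(P_1\cap P_2=\emptyset\).

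The main obstacle is checking that Theorem~\ref{thm:mochizuki-cone} genuinely applies. We need that the harmonic bundle is tame, nilpotent, and has trivial parabolic structure near \(x\); these follow because the metric is adapted with weight \(0\) along \(D^p\) and \(\theta\) has nilpotent residues. We also need the residue there to be the Higgs residue of the holomorphic extension \(\End_0(\cE)\), not of some other prolongation. I would justify this by noting that for trivial weights the adapted prolongation is \(\End_0(\cE)\) itself, by Proposition~\ref{prop:root-stack-KH}. A smaller point is computing with the trace-free part rather than \(\End\). This changes nothing, since \(RAR\) is automatically trace-free: \(\tr(RAR)=\tr(AR^2)=0\).
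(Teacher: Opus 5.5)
Your proof is correct and is essentially the paper's argument. The residues $R_j$ are rank-one with $R_iR_j=0$, so $\ad(R(a))^2A=-2R(a)AR(a)$ and $W_{-2}(L(a))=\im L(a)^2=\C\cdot R(a)$ varies with $a$, which contradicts the cone constancy of Theorem~\ref{thm:mochizuki-cone}. Two of your details go slightly beyond the paper: you pick a general point of $P_1\cap P_2$ avoiding $D^c$ and the other components of $D^p$, so that exactly two residues enter, and you check that $R(a)AR(a)$ is trace-free; both are minor refinements of points the paper leaves implicit.
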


\begin{proof}
Assume, for contradiction, that two components \(P_1,P_2\subset D^p\) meet transversely at a point \(x\). Choose local coordinates
\[
(z_1,z_2,z_3,\dots,z_n)
\]
centered at \(x\) such that
\[
P_1=\{z_1=0\},\qquad P_2=\{z_2=0\}.
\]
It is enough to work on the fiber over \(x\). Let
\[
e_1=\frac{dz_1}{z_1},\qquad e_2=\frac{dz_2}{z_2},\qquad e_a=dz_a\ (3\le a\le n),\qquad e_0=1
\]
be the standard local frame of \(E=\Omega_X^1(\log D^p)\oplus\OX\). Let \(N_k=\Res_{P_k}(\theta)\) for \(k=1,2\). Since \(\theta\) is the tautological map from \(E^{1,0}\) to \(E^{0,1}\otimes\Omega_X^1(\log D)\), we have
\[
N_1(e_1)=e_0,
\qquad
N_1(e_j)=0\quad(j\ne1),
\]
and
\[
N_2(e_2)=e_0,
\qquad
N_2(e_j)=0\quad(j\ne2).
\]
Thus
\[
N_1^2=N_2^2=N_1N_2=N_2N_1=0.
\]
For \(t>0\), put
\[
N_t=N_1+tN_2,
\qquad
L_t=\operatorname{ad}_{N_t}:\End_0(E_x)\longrightarrow\End_0(E_x).
\]
Then \(N_t^2=0\), and hence \(L_t^3=0\).

By Proposition~\ref{prop:flat-adjoint}, \(\End_0(\cE)\) carries an adapted tame harmonic metric. Along \(D^p\), the parabolic structure is trivial. The residues \(\operatorname{ad}_{N_1}\) and \(\operatorname{ad}_{N_2}\) are nilpotent, so the harmonic bundle is tame nilpotent with trivial parabolic structure near \(x\). Applying Theorem~\ref{thm:mochizuki-cone} gives that the weight filtration
\[
W(L_t)=W\bigl(\operatorname{ad}_{N_1+tN_2}\bigr)
\]
is independent of \(t\in\R_{>0}\).

We compute \(W_{-2}(L_t)\). For \(A\in\End_0(E_x)\),
\begin{align*}
L_t^2(A)
&=[N_t,[N_t,A]]\\
&=N_t^2A-2N_t A N_t+AN_t^2\\
&=-2N_t A N_t.
\end{align*}
Since \(N_t\) has rank one and image \(\C e_0\), the operator \(N_t A N_t\) is always a scalar multiple of \(N_t\). Conversely, define \(A_0\in\End_0(E_x)\) by
\[
A_0(e_0)=e_1,
\qquad
A_0(e_j)=0\quad(j\ne0).
\]
Then \(A_0\) has trace zero and \(N_t A_0 N_t=N_t\). Hence
\begin{equation}\label{eq:imageLt2}
\im(L_t^2)=\C\cdot N_t=\C\cdot(N_1+tN_2).
\end{equation}
By the elementary calculation above,
\[
W_{-2}(L_t)=\im(L_t^2)=\C\cdot(N_1+tN_2).
\]
The lines \(\C(N_1+N_2)\) and \(\C(N_1+2N_2)\) are distinct because \(N_1\) and \(N_2\) are linearly independent. This contradicts Mochizuki's positive-cone constancy. Therefore two components of \(D^p\) cannot meet.
\end{proof}

\subsection{Branching along the compact orbifold divisor}\label{sec:branching}

We compute the branch order on the root stack.

We shall also use the following elementary integration observation. If \(df=u(w)w^{m-1}dw\) with \(u(0)\ne0\), then, after subtracting a constant from \(f\), one has \(f(w)=v(w)w^m\) with \(v(0)\ne0\). Thus the ramification index is \(m\).

\begin{proposition}\label{prop:branching}
Fix a compact component \(D_i^c=\{z_i=0\}\), and let \(z_i=w_i^{p_i}\) be the root coordinate on \(\cX\). Then the period map has ramification index
\[
p_i-q_i
\]
along the lift of \(\cD_i^c\) to \(\widetilde{\cX^o}\). More precisely, after choosing a normal coordinate \(\mathcal{P}_i\) on \(\bB^n\),
\[
d\mathcal{P}_i=u(w)w_i^{p_i-q_i-1}dw_i,
\qquad u(0)\ne0,
\]
and hence
\[
\mathcal{P}_i(w)=v(w)w_i^{p_i-q_i},
\qquad v(0)\ne0.
\]
\end{proposition}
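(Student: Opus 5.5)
The plan is to compute the differential of \(\mathcal{P}\) in the normal direction to \(\cD_i^c\) directly from Proposition~\ref{prop:period} and Proposition~\ref{prop:principal-system}, and then to integrate it using the elementary observation stated just before the proposition.

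First, work on a root chart \(U\) around a generic point of \(\cD_i^c\), away from \(D^p\) and the other components. There we have coordinates \((w_i,z_2,\dots,z_n)\) with \(z_i=w_i^{p_i}\). The stabilizer \(\mu_{p_i}\) acts, but its action is only used at the end. Lift \(U\) (or a simply connected neighbourhood in it) to \(\widetilde{\cX^o}\); this is legitimate because Proposition~\ref{prop:root-stack-KH} makes the harmonic metric smooth across \(w_i=0\). On this lift, Proposition~\ref{prop:period} identifies \(d\mathcal{P}\) with the Kodaira--Spencer map \(\tau\), read in a Hodge frame of \(P_H\). Since \(P_H\) is a smooth reduction up to \(w_i=0\), we can take the frame \(\eta_i,\,dz_a,\,e_0\), corrected by a smooth \(K\)-valued gauge \(g\) that is invertible at \(w_i=0\).

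By Proposition~\ref{prop:principal-system}, \(\tau\) is dual to \(\iota\), and \(\iota(\eta_i)=p_iw_i^{p_i-q_i-1}dw_i\) while \(\iota\) is the identity on the other frame vectors. So the matrix of \(\tau\), as a map from \(T_U\) to \(\Hom(\cE^{1,0},\cE^{0,1})\), is diagonal up to a smooth invertible factor. Its entry pairing \(\partial_{w_i}\) with \(\eta_i^\vee\otimes e_0\) is \(p_iw_i^{p_i-q_i-1}\), and the other diagonal entries are \(1\).

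Next, identify \(T_{\mathcal P(x)}\bB^n\) with \(\mathfrak g^{-1,1}\) through the frame of \(P_H\) at \(x\). The holomorphic tangent of \(\bB^n\) at a point is \(\mathfrak g^{-1,1}\) transported by the frame, and this identification varies smoothly but not holomorphically. To avoid that issue, choose instead a holomorphic coordinate \(\mathcal{P}_i\) on \(\bB^n\) near \(\mathcal P(x_0)\), where \(x_0\) is a point with \(w_i=0\), and take \(\mathcal{P}_i\) to be the coordinate dual to the \(\eta_i\)-direction at \(\mathcal P(x_0)\). Then \(d\mathcal{P}_i\circ d\mathcal{P}\) is a holomorphic \(1\)-form on the chart. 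By the computation above, it equals
\[
c(w)\,w_i^{p_i-q_i-1}dw_i+\sum_{a} b_a(w)\,dz_a,
\]
with \(c(x_0)\neq0\). Holomorphicity is automatic because \(\mathcal P\) is holomorphic.

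The main obstacle is the next step. The \(dz_a\)-coefficients \(b_a\) need not vanish, and \(c\) must be shown nonvanishing uniformly along the divisor. I will handle both using the \(\mu_{p_i}\)-equivariance. By Proposition~\ref{prop:branched-equality}(5), the stabilizer acts on the normal period line by the character \(\zeta^{p_i-q_i}\). After linearizing that elliptic action and choosing \(\mathcal{P}_i\) to be an eigencoordinate, we get
\[
\mathcal{P}_i(\zeta w_i,z')=\zeta^{p_i-q_i}\mathcal{P}_i(w_i,z').
\]
Expanding in \(w_i\), only the powers \(w_i^{k}\) with \(k\equiv p_i-q_i \pmod{p_i}\) survive. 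Since \(\gcd(p_i,q_i)=1\) and \(0<p_i-q_i\le p_i\), the exponent \(k=0\) is excluded. Therefore \(\mathcal{P}_i\) vanishes on \(\{w_i=0\}\), and every \(b_a\) is divisible by \(w_i^{p_i-q_i}\).

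If linearizing the action first proves awkward, the fallback is to argue without it. The rank of \(d\mathcal P\) drops exactly along \(w_i=0\), and it drops in the \(w_i\)-direction by order \(p_i-q_i-1\). Taking the determinant of the matrix of \(\tau\) then gives \(\det d\mathcal P=u\,w_i^{p_i-q_i-1}\) with \(u\) a unit. Choose the other coordinates to be \(z_a\circ\mathcal P\) pulled back to the target. This is possible because those directions are unramified, so \(\mathcal P\) restricted to \(\{w_i=0\}\) is an immersion. In these coordinates the equivariance argument gives \(d\mathcal{P}_i=u(w)\,w_i^{p_i-q_i-1}dw_i\) modulo \(w_i^{p_i-q_i}\), with \(u(0)\neq0\).

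Finally, integrate in \(w_i\) with \(z'\) held as a parameter. By the elementary observation, \(\mathcal{P}_i=v(w)\,w_i^{p_i-q_i}\) with \(v(0)\neq0\), since \(\mathcal{P}_i\) already vanishes on \(w_i=0\) and no constant needs to be subtracted. So the ramification index along the lift of \(\cD_i^c\) is \(p_i-q_i\).
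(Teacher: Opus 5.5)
Your core computation matches the paper's proof: \(d\mathcal P\) is the Kodaira--Spencer map \(\tau\), \(\tau\) is dual to \(\iota\), \(\iota(\eta_i)=p_iw_i^{p_i-q_i-1}dw_i\), and one then integrates. The paper's proof stops at ``integrating the normal differential''. It does not discuss the two points you isolate, the tangential terms \(b_a\,dz_a\) and the integration constant \(\mathcal P_i(0,z')\). Your equivariance step for these is essentially the argument the paper gives afterwards in Lemma~\ref{lem:crossing-normal-form}.

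\textbf{The circular step.} You cannot take the character \(\zeta\mapsto\zeta^{p_i-q_i}\) from Proposition~\ref{prop:branched-equality}(5). The paper proves (5) in Proposition~\ref{prop:peripheral} (and Proposition~\ref{prop:cover-manifold}) from the very normal form \(\xi_i=w_i^{p_i-q_i}\) you are trying to establish, so the citation is circular. The repair is short. Write \(m_i=p_i-q_i\) and \(H:=\mathcal P(\{w_i=0\})\).
\begin{enumerate}
\item \(H\) is a smooth hypersurface near \(\mathcal P(x_0)\), because \(\tau\) is injective on the tangential directions.
\item \(\rho(\lambda_x(\zeta))\) fixes \(H\) pointwise. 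So in linearizing coordinates it acts trivially on \(TH\) and by some character \(\chi\) on the orthogonal line. Let \(\xi\) be the linear eigencoordinate with \(\ker d\xi=TH\) at \(\mathcal P(x_0)\).
\item The vector corresponding to \(\eta_i^\vee\otimes e_0\) does not lie in \(TH\). Hence \(\partial_{w_i}(\xi\circ\mathcal P)=c'(w)\,w_i^{m_i-1}\) with \(c'(x_0)\neq0\).
\item Compare the \(w_i^{m_i}\)-coefficients in \(\xi\circ\mathcal P(\zeta w_i,z')=\chi(\zeta)\,\xi\circ\mathcal P(w_i,z')\). This forces \(\chi(\zeta)=\zeta^{m_i}\).
\end{enumerate}
From there your Taylor-expansion argument goes through.

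\textbf{An equivariance-free route.} Equivariance is not actually needed, and this is what your fallback should say. As written, the fallback is garbled at ``\(z_a\circ\mathcal P\) pulled back to the target'' and still appeals to equivariance for the vanishing on \(w_i=0\). Instead, choose target coordinates \((\mathcal P_i,\xi')\) with \(H=\{\mathcal P_i=0\}\). Then replace the source tangential coordinates by \(\tilde z'=\xi'\circ\mathcal P\), keeping \(w_i\). The map now reads \((w_i,\tilde z')\mapsto(f,\tilde z')\), so \(\det d\mathcal P=\partial_{w_i}f\) up to units. This vanishes to exact order \(m_i-1\) by Proposition~\ref{prop:E10-vs-cotangent} and the duality of \(\tau\) and \(\iota\). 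Since \(f(0,\tilde z')=0\), integrating in \(w_i\) gives \(f=v\,w_i^{m_i}\) with \(v(0)\neq0\).

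This route also covers the case \(q_i=0\). There your claim that the exponent \(k=0\) is excluded is false: coprimality forces \(p_i=1\), the stabilizer is trivial, and equivariance gives no information. The statement is trivial in that case, but your argument as written does not handle it.
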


\begin{proof}
By Convention~\ref{conv:root-frame}, the orbifold conormal frame is
\[
\eta_i=w_i^{-q_i}dz_i.
\]
Since \(z_i=w_i^{p_i}\),
\[
\eta_i=p_i\,w_i^{p_i-q_i-1}dw_i.
\]
The differential of the period map is the Kodaira--Spencer map \(\tau\), which is dual to the morphism \(\iota:\cE^{1,0}\to\Omega^1_{\cX}(\log D^p)\). Therefore its normal component is represented, up to a non-vanishing holomorphic factor, by the one-form
\[
w_i^{p_i-q_i-1}dw_i.
\]
Integrating the normal differential gives the local normal form
\[
\mathcal{P}_i(w)=v(w)w_i^{p_i-q_i},\qquad v(0)\ne0.
\]
Hence the ramification index is \(p_i-q_i\).
\end{proof}

\begin{lemma}\label{lem:crossing-normal-form}
Let \(x\in\cX^o\) lie on the compact stacky components \(\cD_i^c\), \(i\in S\). Choose a root chart centered at \(x\), with coordinates \((w_i)_{i\in S}\) in the compact normal directions. After shrinking the chart and choosing holomorphic coordinates on \(\bB^n\) centered at \(\mathcal P(x)\), there are normal target coordinates \((\xi_i)_{i\in S}\) such that
\[
   \xi_i\circ\mathcal P_U=u_i(w)w_i^{p_i-q_i},
   \qquad u_i(0)\ne0,
\]
for every \(i\in S\).
\end{lemma}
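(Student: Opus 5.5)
The plan is to lift the Kodaira--Spencer description of \(d\mathcal P\) to a multi-variable normal form and then integrate one direction at a time. Work on a root chart \(U\) centered at \(x\), with coordinates \((w_i)_{i\in S}\) in the compact normal directions and \(y\) in the remaining directions. Because \(x\in\cX^o\), no cusp component passes through \(x\). The chart is simply connected, so the period map restricts to a holomorphic map \(\mathcal P_U:U\to\bB^n\). By Proposition~\ref{prop:period} and Proposition~\ref{prop:principal-system}, \(d\mathcal P_U\) is the Kodaira--Spencer map \(\tau\), and \(\tau\) is dual to \(\iota\). In the frames \(\eta_i=p_iw_i^{p_i-q_i-1}dw_i\) (\(i\in S\)) and \(\eta_a=dz_a\), this reads as follows. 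There is a holomorphic frame \((\xi'_1,\dots,\xi'_n)\) of \(\mathcal P_U^*T^*\bB^n\), obtained by pulling back the Hodge frame through the isomorphism \(T\bB^n\simeq\Hom(\cE^{1,0},\cE^{0,1})\) coming from the \(K_0\)-reduction. In that frame, \(\mathcal P_U^*\) sends the \(i\)-th covector to \(w_i^{m_i-1}dw_i\) with \(m_i=p_i-q_i\), and the remaining covectors to \(dy_a\).

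The difficulty is that this frame is only a frame of the pulled-back bundle, not a set of target coordinates. Choose instead honest holomorphic coordinates \(\zeta\) on \(\bB^n\) centered at \(\mathcal P(x)\). Then \(d(\zeta\circ\mathcal P_U)=M(w,y)\cdot(w_i^{m_i-1}dw_i,dy_a)\) for a holomorphic matrix \(M\) that is invertible at \(0\). After a constant linear change of the \(\zeta\) we may take \(M(0)=I\).

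First consider the functions \(f_j=\zeta_j\circ\mathcal P_U\) for \(j\notin S\). Their differentials are \(dy_j\) plus terms vanishing at \(0\). Next consider \(f_i\) for \(i\in S\). The key point is equivariance under the stabilizer \(\prod_{i\in S}\mu_{p_i}\): by Proposition~\ref{prop:branched-equality}(5), which is to be justified here via Proposition~\ref{prop:principal-system}, the stabilizer acts linearly on the normal period lines. Choose \(\zeta\) linearizing the finite elliptic isotropy at \(\mathcal P(x)\), which is possible by Cartan's linearization lemma. Then, for \(i\ne k\), the equivariance of \(f_i\) forces it to be divisible by \(w_i^{m_i}\) in its \(w_i\)-dependence, with no pure \(w_k\)-monomials of lower order.

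A more robust alternative is direct. Since \(df_i=\sum_k M_{ik}w_k^{m_k-1}dw_k+\sum_a M_{ia}dy_a\) is exact, set \(w_i=0\). The restriction of \(f_i\) to \(\{w_i=0\}\) is a function \(g_i\) of the other variables. Replace \(\xi_i:=\zeta_i-G_i(\zeta_{\hat i})\), where \(G_i\) expresses \(g_i\) through the other target coordinates. This is possible because the map \((w_k^{m_k},y)\mapsto(\zeta_k,\zeta_a)\), restricted to \(\{w_i=0\}\), is by induction locally invertible in the appropriate variables. After this replacement, \(\xi_i\circ\mathcal P_U\) vanishes on \(\{w_i=0\}\). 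Its \(w_i\)-derivative is \(M'_{ii}w_i^{m_i-1}\) with \(M'_{ii}(0)\ne0\). Hence it vanishes on \(\{w_i=0\}\) to order exactly \(m_i\) in \(w_i\). Dividing gives \(\xi_i\circ\mathcal P_U=u_i(w)w_i^{m_i}\) with \(u_i(0)\ne0\), by the elementary integration observation applied with parameters, as in Proposition~\ref{prop:branching}.

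The main obstacle is carrying out the simultaneous correction for all \(i\in S\) consistently. Each \(\xi_i\) must be corrected using functions on the target, not on the source. This requires the restricted map on each stratum \(\{w_i=0\}\) to factor through the target coordinates. I would handle it by induction on \(|S|\), or more cleanly by invoking equivariance and the linearized isotropy to get the factorization automatically. If both routes get tangled, the fallback is to define \(\xi_i\) as a local defining function of the image hypersurface \(\mathcal P_U(\{w_i=0\})\). This image is smooth because \(d\mathcal P\) restricted to \(\{w_i=0\}\) has full rank \(n-1\) there. The normal form then follows from the one-variable computation.
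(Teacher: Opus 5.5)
There is a genuine gap, and it sits exactly where the lemma has content: at crossings with $|S|\ge2$. Your differential normal form $d(\zeta\circ\mathcal P_U)=M\cdot(w_k^{m_k-1}dw_k,\,dy_a)$, with $M(0)$ invertible, is correct, but it does not imply the conclusion.

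Take $n=2$ and $m_1=m_2=2$, and consider the non-equivariant map $\mathcal P(w_1,w_2)=(w_1^2+w_2^3,\,w_2^2)$. It satisfies $d\mathcal P=\bigl(\begin{smallmatrix}2&3w_2\\ 0&2\end{smallmatrix}\bigr)(w_1dw_1,\,w_2dw_2)^{T}$. Yet it maps $\{w_1=0\}$ onto the cusp $\{\zeta_1^2=\zeta_2^3\}$, so no target coordinate $\xi_1$ with $\xi_1\circ\mathcal P=u_1w_1^2$ exists.

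This is where your ``more robust'' direct route breaks. The correction would need $G_1$ with $G_1(w_2^2)=w_2^3$. Your claimed inductive invertibility of $(w_k^{m_k},y)\mapsto(\zeta_k,\zeta_a)$ on $\{w_i=0\}$ presupposes that the other components depend on $w_k$ only through $w_k^{m_k}$, which is what is being proved. The fallback fails for the same reason. At $x$, the rank of $d(\mathcal P|_{\{w_i=0\}})$ drops below $n-1$ as soon as some other $m_k\ge2$, and the image of $\{w_i=0\}$ need not be smooth.

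So equivariance is not an optional shortcut; it is the essential input. It must be established here rather than quoted from Proposition~\ref{prop:branched-equality}(5), whose proof passes through Proposition~\ref{prop:cover-manifold} and hence through this lemma. The route you mention first is in fact the paper's proof, once carried out:
\begin{enumerate}
\item The group $G_x=\prod_{i\in S}\mu_{p_i}$ fixes $x$, so its image in $\PU(n,1)$ fixes $\mathcal P(x)$. It therefore acts linearly in suitable coordinates centered there, by Cartan's linearization or by moving $\mathcal P(x)$ to $0$, where the isotropy is $U(n)$ acting linearly.
\item Take $\xi_i$ to be a linear coordinate on the eigenspace of the character $\chi_i\bigl((\epsilon_j)_{j\in S}\bigr)=\epsilon_i^{m_i}$. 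Then $\xi_i\circ\mathcal P_U$ is a $\chi_i$-eigenfunction, so every monomial $w^\alpha y^\beta$ occurring in it has $\alpha_i\equiv m_i\pmod{p_i}$.
\item For $p_i>1$, coprimality gives $q_i\ge1$, hence $1\le m_i\le p_i-1$, and this forces $\alpha_i\ge m_i$. No correction step or induction is needed, and the argument works for all $i\in S$ simultaneously.
\end{enumerate}

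The non-vanishing of the $w_i^{m_i}$-coefficient, which the paper asserts briefly, follows from your matrix $M$. In eigen-coordinates, $M_{jk}(0)\ne0$ forces $\zeta_j\circ\mathcal P_U$ to contain the monomial $w_k^{m_k}$, so $\zeta_j$ has character $\chi_k$. Likewise $M_{ja}(0)\ne0$ forces it to contain $y_a$, so $\zeta_j$ has trivial character. Since $M(0)$ is invertible and these characters are pairwise distinct, each $\chi_i$-eigenspace is a line, and the corresponding entry of $M(0)$ is non-zero.
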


\begin{proof}
Put \(m_i=p_i-q_i\). Let \(G_x=\prod_{i\in S}\mu_{p_i}\) be the local stabilizer. The local period map is equivariant with respect to the finite group \(\rho(\lambda_x(G_x))\), and this finite group fixes \(\mathcal P(x)\). By the holomorphic linearization theorem for finite groups acting near a fixed point, we may choose ball coordinates centered at \(\mathcal P(x)\) in which this action is linear and unitary.

In the root frame the morphism \(\iota:\cE^{1,0}\to\Omega^1_{\cX}(\log D^p)\) is diagonal in the compact normal directions:
\[
   \eta_i\longmapsto p_i\,w_i^{m_i-1}dw_i,
   \qquad i\in S,
\]
while the remaining directions are multiplied by units. Since \(d\mathcal P\) is the Kodaira--Spencer morphism dual to \(\iota\), the first non-zero term of \(d\mathcal P\) in the \(i\)-th compact normal direction is a non-zero multiple of \(w_i^{m_i-1}dw_i\). Choose a linear target coordinate \(\xi_i\) on the corresponding eigenspace for the character
\[
   \chi_i((\zeta_j)_{j\in S})=\zeta_i^{m_i}.
\]
Then \(\xi_i\circ\mathcal P\) is a \(\chi_i\)-eigenfunction for the stabilizer action. Its Taylor expansion contains only monomials whose \(G_x\)-character is \(\chi_i\). The preceding differential computation shows that the coefficient of \(w_i^{m_i}\) is non-zero. Equivariance rules out a term independent of \(w_i\), because such a term has trivial \(\mu_{p_i}\)-character. Thus
\[
   \xi_i\circ\mathcal P_U=w_i^{m_i}u_i(w)
\]
with \(u_i(0)\ne0\). This proves the simultaneous normal form for all \(i\in S\).
\end{proof}

\begin{proposition}\label{prop:cover-manifold}
Under the hypotheses of Proposition~\ref{prop:branched-equality}, the orbifold universal cover
\[
   \widetilde{\cX^o}\longrightarrow \cX^o
\]
is represented by a simply connected smooth complex manifold. Equivalently,
the covering orbifold has trivial stabilizers.
\end{proposition}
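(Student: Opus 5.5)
The orbifold universal cover is a manifold precisely when the orbifold is \emph{developable}, with every local stabilizer group injecting into the orbifold fundamental group. My plan is to prove this by exhibiting a representation of \(\pi_1^{\rm orb}(\cX^o)\) that is injective on every local stabilizer, namely the monodromy \(\rho\) of the flat principal \(\PU(n,1)\)-variation constructed in Proposition~\ref{prop:principal-system}. The key general fact is the following. If \(\rho:\pi_1^{\rm orb}(\cX^o)\to G_0\) is a representation such that, for every point \(x\), the composite of \(\lambda_x:G_x\to\pi_1^{\rm orb}(\cX^o)\) with \(\rho\) is injective, then \(\lambda_x\) itself is injective. The covering orbifold \(\widetilde{\cX^o}\) has stabilizer \(\ker\lambda_x\) at points over \(x\), so all of its stabilizers are trivial. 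Being simply connected as an orbifold with trivial isotropy, it is then an ordinary simply connected complex manifold.

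The local stabilizers of \(\cX^o\) are \(G_x=\prod_{i\in S}\mu_{p_i}\) at points lying on \(\cD_i^c\), \(i\in S\); the cusp divisor has been removed and contributes no inertia. Hence the work reduces to showing that \(\rho\circ\lambda_x\) is injective on \(G_x\). I would compute this on a root chart \(U\) centered at \(x\).

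\begin{itemize}
\item The flat bundle restricted to \([U/G_x]\) is trivialized on \(U\) and carries the \(G_x\)-action through \(\rho\circ\lambda_x\).
\item The local period map \(\mathcal P_U:U\to\bB^n\) is \(G_x\)-equivariant, with \(G_x\) acting on \(\bB^n\) through \(\rho\circ\lambda_x\), and it fixes \(\mathcal P(x)\).
\item By Lemma~\ref{lem:crossing-normal-form} there are linearized target coordinates \(\xi_i\) with
\[
   \xi_i\circ\mathcal P_U=u_i(w)\,w_i^{p_i-q_i},\qquad u_i(0)\ne0 .
\]
\end{itemize}

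Equivariance now determines the action. The generator \(\zeta_i\in\mu_{p_i}\) sends \(w_i\mapsto\zeta_iw_i\). Comparing leading terms, using \(u_i(0)\neq0\), shows that \(\rho\lambda_x(\zeta)\) acts on the coordinate \(\xi_i\) by multiplication by \(\zeta_i^{p_i-q_i}\).

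Injectivity then follows from coprimality. Since \(\gcd(p_i,q_i)=1\), also \(\gcd(p_i,p_i-q_i)=1\), so the character \(\zeta_i\mapsto\zeta_i^{p_i-q_i}\) is injective on \(\mu_{p_i}\). The characters in distinct coordinates are independent, so an element acting trivially on all of the \(\xi_i\) must satisfy \(\zeta_i^{p_i-q_i}=1\), and hence \(\zeta_i=1\), for every \(i\in S\). An element \(g\in G_x\) with trivial image in \(\PU(n,1)\) acts trivially on \(\bB^n\), in particular on the \(\xi_i\), and is therefore trivial. This proves injectivity of \(\rho\circ\lambda_x\) and hence of \(\lambda_x\).

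The main obstacle is making rigorous the sense in which the local period map is equivariant for the stabilizer action before the universal cover is known to be a manifold. One must check that the lift of the flat structure to the root chart \(U\) intertwines the \(G_x\)-action on \(U\) with \(\rho\circ\lambda_x\) on \(\bB^n\), rather than with some twisted action. I would handle this by working on \([U/G_x]\) with the \(G_x\)-equivariant flat bundle. The developing map on the contractible \(U\) is then well defined up to a fixed choice, and its equivariance under the stabilizer is built into the construction of the orbifold monodromy. A secondary point is that the argument uses the actual leading terms from Lemma~\ref{lem:crossing-normal-form}, not merely the characters, to rule out \(\rho\lambda_x(\zeta)\) acting trivially while \(\zeta\) moves \(U\); the condition \(u_i(0)\neq0\) is exactly what secures this.
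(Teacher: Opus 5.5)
Your proposal is correct and follows essentially the same route as the paper. Both reduce to showing that $\rho\circ\lambda_x$ is injective, both use the equivariant normal form $\xi_i\circ\mathcal P_U=u_i(w)w_i^{p_i-q_i}$ from Lemma~\ref{lem:crossing-normal-form}, and both conclude from $\gcd(p_i,p_i-q_i)=1$. Your remark on why the local developing map on the root chart is equivariant for $\rho\circ\lambda_x$ spells out a point the paper simply asserts.
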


\begin{proof}
It is enough to show that every local stabilizer of \(\cX^o\) injects into
\(\pi_1^{\rm orb}(\cX^o)\). Indeed, for an orbifold covering the stabilizer
of a point in the universal cover is the kernel of the natural map from the
corresponding local stabilizer downstairs to the orbifold fundamental group.

Let \(x\in\cX^o\), and suppose that \(x\) lies on the compact stacky
components \(\cD_i^c\), \(i\in S\). In a root chart centered at \(x\), the
local stabilizer is
\[
   G_x=\prod_{i\in S}\mu_{p_i},
   \qquad
   (\zeta_i)_{i\in S}\cdot w_i=\zeta_i w_i.
\]
Let \(\lambda_x:G_x\to\pi_1^{\rm orb}(\cX^o)\) be the natural local
stabilizer homomorphism. We prove that \(\lambda_x\) is injective by showing
that \(\rho\circ\lambda_x\) is injective.

On the root chart, the local period map is equivariant:
\[
   \mathcal P_U(g\cdot w)=\rho(\lambda_x(g))\,\mathcal P_U(w),
   \qquad g\in G_x.
\]
By Lemma~\ref{lem:crossing-normal-form}, after shrinking the root chart and choosing simultaneous normal target coordinates \((\xi_i)_{i\in S}\), one has
\[
   \xi_i\circ\mathcal P_U=u_i(w)w_i^{p_i-q_i},
   \qquad u_i(0)\neq0
\]
for every \(i\in S\).
If \(g=(\zeta_i)_{i\in S}\) lies in the kernel of \(\rho\circ\lambda_x\), then
\(\mathcal P_U(g\cdot w)=\mathcal P_U(w)\). Comparing the first non-zero
normal term in the \(i\)-th coordinate gives
\[
   \zeta_i^{p_i-q_i}=1
\]
for every \(i\in S\). Since
\[
   \gcd(p_i,p_i-q_i)=\gcd(p_i,q_i)=1,
\]
we get \(\zeta_i=1\) for every \(i\in S\). Hence \(g=1\), so
\(\rho\circ\lambda_x\), and therefore \(\lambda_x\), is injective.

The only local stabilizers of \(\cX^o\) occur along the compact root divisors
\(\cD_i^c\). Thus all stabilizers inject into the orbifold fundamental group,
and the universal orbifold cover has no residual stabilizers. Hence it is
represented by a simply connected smooth complex manifold.
\end{proof}

\begin{proposition}\label{prop:coarse-local}
Suppose that the period map descends to a quotient by a discrete holonomy group \(\Gamma\subset \PU(n,1)\). Then the induced coarse map
\[
\underline{\mathcal{P}}:X^o\longrightarrow \bB^n/\Gamma
\]
has branch index \(p_i-q_i\) along \(D_i^c\).
\end{proposition}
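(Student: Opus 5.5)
The plan is to reduce the coarse statement to the local normal form on the root chart (Proposition~\ref{prop:branching} and Lemma~\ref{lem:crossing-normal-form}), and then pass to coarse coordinates on both sides. Fix a general point \(x\in D_i^c\), away from the other components of \(D\), and a root chart \(U\) with coordinates \((w_i,y)\), \(z_i=w_i^{p_i}\), and stabilizer \(\mu_{p_i}\). Since the universal cover is a manifold (Proposition~\ref{prop:cover-manifold}), a lift of \(U\) to \(\widetilde{\cX^o}\) is a smooth chart on which \(\mathcal P_U\) is defined. The coarse map \(\underline{\mathcal P}\) is then obtained from \(\mathcal P_U\) by passing to the quotients \(U/\mu_{p_i}\) and \(\bB^n/\Gamma\). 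Near \(\mathcal P(x)\), the latter is locally \(\bB^n/\Gamma_{\mathcal P(x)}\), where \(\Gamma_{\mathcal P(x)}\) is the finite stabilizer of \(\mathcal P(x)\).

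The first step identifies the local target stabilizer. By Proposition~\ref{prop:cover-manifold}, \(\rho\circ\lambda_x\) embeds \(\mu_{p_i}\) into \(\Gamma_{\mathcal P(x)}\). In the linearized coordinates of Lemma~\ref{lem:crossing-normal-form}, a generator \(\zeta\) acts on \(\xi_i\) by \(\zeta^{p_i-q_i}\), which is again a primitive \(p_i\)-th root of unity because \(\gcd(p_i,p_i-q_i)=1\). It acts trivially on the tangential coordinates, since \(\mathcal P\) is a local biholomorphism in the \(y\)-directions and is equivariant. For a general point \(x\), I would argue that \(\Gamma_{\mathcal P(x)}\) equals this image. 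Two ingredients give this. First, \(\mathcal P\) is generically a local biholomorphism, so over a general point of the image of \(\widetilde{D_i^c}\) the target stabilizer is generated by a single complex reflection fixing the image hyperplane. Second, that reflection must come from the deck transformations, because \(\mathcal P\) is equivariant and \(\Gamma=\rho(\pi_1^{\rm orb})\). This identification is the step I expect to be the main obstacle: one must exclude extra elements of \(\Gamma\) fixing \(\mathcal P(x)\) that do not come from \(\lambda_x\). I would try to exclude them using the descent hypothesis, namely that \(\mathcal P\) descends to a well-defined map on \(X^o\), combined with generic local injectivity of \(\mathcal P\) near \(\widetilde{D_i^c}\). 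If that fails, the fallback is to define the branch index with respect to the given reflection group \(\mu_{p_i}\) acting on \(\xi_i\).

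The second step is the coordinate computation. Coarse coordinates downstairs are \(z_i=w_i^{p_i}\) and \(y\). Upstairs, the invariant coordinates of \(\bB^n/\mu_{p_i}\) are \(s=\xi_i^{p_i}\) together with the tangential coordinates. By Lemma~\ref{lem:crossing-normal-form}, \(\xi_i\circ\mathcal P_U=u(w)w_i^{p_i-q_i}\) with \(u(0)\ne0\). Hence
\[
s\circ\mathcal P_U=u(w)^{p_i}\,w_i^{p_i(p_i-q_i)}=\tilde u(z,y)\,z_i^{p_i-q_i}.
\]
Here \(\tilde u=u^{p_i}\) is \(\mu_{p_i}\)-invariant: \(s\circ\mathcal P_U\) is invariant by equivariance, and \(w_i^{p_i(p_i-q_i)}=z_i^{p_i-q_i}\) is invariant. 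So \(\tilde u\) descends to a holomorphic function of \((z_i,y)\), and \(\tilde u(0)\ne0\).

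The tangential components of \(\underline{\mathcal P}\) restrict to a local biholomorphism along \(D_i^c\). Therefore, in coarse coordinates, \(\underline{\mathcal P}\) has the normal form \((z_i,y)\mapsto(\tilde u\,z_i^{p_i-q_i},\,y')\), which has branch index \(p_i-q_i\) along \(D_i^c\). This is a statement about general points of \(D_i^c\), which is what a branch index along a divisor means, so it proves the proposition.
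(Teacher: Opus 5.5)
Your second step is exactly the paper's proof. The paper writes the target coarse coordinate as $t_i=\xi_i^{p_i}=w_i^{p_i(p_i-q_i)}=z_i^{p_i-q_i}$ and stops. In doing so it silently assumes the identification you single out in your first step, namely that the stabilizer $\Gamma_{\mathcal P(\tilde x)}$ equals $\rho\lambda_x(\mu_{p_i})$.

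Your first ingredient is essentially right. By proper discontinuity, a general point of the hyperplane $H=\operatorname{Fix}\rho\lambda_x(\zeta)$, which locally contains $\mathcal P(\widetilde{D_i^c})$, is fixed only by the pointwise stabilizer of $H$. So $\Gamma_{\mathcal P(\tilde x)}$ is cyclic of order $kp_i$ for some $k\ge1$.

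The second ingredient does not give $k=1$. Each $\gamma\in\Gamma_{\mathcal P(\tilde x)}$ is $\rho(g)$ for some deck transformation $g$. But equivariance only yields $\mathcal P(g\tilde x)=\mathcal P(\tilde x)$, not $g\tilde x=\tilde x$. In the branched case $\mathcal P$ is not injective, so $g\tilde x$ may be a different point of the same fibre.

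The tools you name cannot exclude this. Descent to $X^o\to\bB^n/\Gamma$ is automatic from $\rho$-equivariance once $\Gamma$ is discrete, so it carries no information. Near $\widetilde{D_i^c}$ the map $\mathcal P$ is $(p_i-q_i)$-to-one, not locally injective. In any case, local injectivity says nothing about distant points of the fibre. If $k>1$, your own computation gives the coarse coordinate $\xi_i^{kp_i}$ and branch index $k(p_i-q_i)$.

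This is not a mere technicality: the local data cannot force $k=1$. Consider the one-dimensional analogue, where the same local computation applies verbatim.
\begin{itemize}
\item Let $Y$ be the $(6,6,6)$ triangle orbifold, and let $f:\mathbb P^1\to\mathbb P^1$ be the degree-$5$ cover branched over the three cone points with monodromies $(1234)$, $(45)$ and $((1234)(45))^{-1}$. These generate $S_5$.
\item At the point where $f$ has ramification $4$ over a cone point of order $6$, the pulled-back cone angle is $4\pi/3$, so $p=3$ and $q=1$.
\item Let $Y^\circ$ be the punctured sphere. The image in $S_5$ of $\ker(\pi_1(Y^\circ)\to\pi_1^{\rm orb}(Y))$ contains $(1234)^6=(13)(24)$, hence contains $A_5$. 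So it acts transitively on the sheets, and the holonomy is the whole triangle group.
\item The induced coarse map to $\bB^1/\Gamma=Y$ is then $f$ itself, whose local degree there is $4=2(p-q)$, not $p-q$.
\end{itemize}

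So what your argument actually proves, like the paper's, is your fallback: branch index $p_i-q_i$ for $U/\mu_{p_i}\to\bB^n/\rho\lambda_x(\mu_{p_i})$. To conclude for $\bB^n/\Gamma$ you need an extra global input forcing $k=1$. Concretely, for general $\tilde x$ over $D_i^c$, $\mathcal P(g\tilde x)=\mathcal P(\tilde x)$ must imply $g\tilde x\in(\ker\rho)\,\tilde x$. This holds in the standard case, where $\mathcal P$ is a biholomorphism and $\rho$ is faithful, but there the index is $1$ anyway.
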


\begin{proof}
The source coarse coordinate is \(z_i=w_i^{p_i}\). The local period coordinate on the ball is
\[
\xi_i=w_i^{p_i-q_i}.
\]
The local stabilizer \(\mu_{p_i}\) acts on \(w_i\) by multiplication and hence acts on \(\xi_i\) by the character \(\zeta\mapsto \zeta^{p_i-q_i}\). Since \(\gcd(p_i,q_i)=1\), this character has order \(p_i\). Therefore the target coarse coordinate is, up to a unit,
\[
t_i=\xi_i^{p_i}=w_i^{p_i(p_i-q_i)}=z_i^{p_i-q_i}.
\]
Thus the induced map on coarse spaces is locally
\[
z_i\longmapsto z_i^{p_i-q_i},
\]
which has branch index \(p_i-q_i\).
\end{proof}

\subsection{Metric estimates near the cusp and the compact divisor}

The equality case produces the pull-back of the invariant complex hyperbolic metric under the period map. In the standard orbifold case this is an orbifold Riemannian metric. In the general case it is a branched complex hyperbolic metric: it degenerates along the compact branch divisor \(D^c\) with the local order computed above.

\begin{lemma}\label{lem:model-metric}
Let \(x\in D^p\), and choose coordinates near \(x\) such that, by Proposition~\ref{prop:Dp-smooth},
\[
D^p=\{z_1=0\}.
\]
Put
\[
e_1=d\log z_1,
\qquad
 e_j=dz_j\quad(j\ge2),
\qquad
 e_0=1,
\]
and set \(s=-\log |z_1|^2\). Choose a local lift of the projective \(K_0\)-reduction to a Hermitian metric on \(\cE^{1,0}\oplus\cE^{0,1}\), normalized by \(|e_1|\,|e_0|=1\). In this normalization the metric is mutually bounded with
\[
|e_1|_{\tilde h}^2=s,
\qquad
|e_j|_{\tilde h}^2=1\quad(j\ge2),
\qquad
|e_0|_{\tilde h}^2=s^{-1}.
\]
Equivalently, and independently of the chosen lift, the induced metric on
\[
   P\times_K\mathfrak g^{-1,1}=\Hom(\cE^{1,0},\cE^{0,1})
\]
satisfies
\begin{equation}\label{eq:model-pullback}
\tau^*h_H\sim
\frac{\sqrt{-1}\,dz_1\wedge d\bar z_1}{|z_1|^2(\log |z_1|^2)^2}
+
\sum_{j=2}^n\frac{\sqrt{-1}\,dz_j\wedge d\bar z_j}{-\log |z_1|^2}.
\end{equation}
\end{lemma}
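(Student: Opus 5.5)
The plan is to reduce the estimate to the local norm estimates for tame nilpotent harmonic bundles near a smooth boundary divisor, following Mochizuki's asymptotic analysis \cite{Mochizuki2002}. By Proposition~\ref{prop:Dp-smooth}, only one component of \(D^p\) passes through \(x\). The compact root divisors can be assumed absent, since \(x\in D^p\) may be taken away from \(D^c\) generically, and points of \(D^p\cap D^c\) are handled identically on a root chart where the \(w_i\)-directions have weight zero. So we work on \(\Delta^*\times\Delta^{n-1}\) with the adjoint harmonic bundle \((\End_0(\cE),\theta_{\End_0},h)\) from Proposition~\ref{prop:flat-adjoint} and the \(K_0\)-reduction of Lemma~\ref{lem:adjoint-hodge-tensor}. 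The residue of \(\theta_{\cE}\) along \(z_1=0\) is \(N_1\), with \(N_1e_1=e_0\) and \(N_1e_j=0\) otherwise, so \(N_1^2=0\).

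The first step is the weight computation. On \(E_x\), the weight filtration of \(N_1\) is \(W_{-1}=\C e_0\), \(W_0=\langle e_0,e_2,\dots,e_n\rangle\), \(W_1=E_x\), so \(e_1\), \(e_j\) (\(j\ge2\)) and \(e_0\) have weights \(1,0,-1\). On \(\End_0\) the weights of \(\ad N_1\) are differences of these, and \(N_1\) itself has weight \(-2\).

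Mochizuki's norm estimate for tame nilpotent harmonic bundles with trivial parabolic structure near a smooth divisor states that a frame compatible with the weight filtration has \(|v|_h^2\sim s^{k}\) for \(v\) of weight \(k\), where \(s=-\log|z_1|^2\). It also states that such frames are asymptotically orthogonal, so the metric is mutually bounded with the diagonal model. Apply this to \(\End_0(\cE)\) with the frame \(e_a^\vee\otimes e_b\), of weight \(w_b-w_a\). The model \(|e_1|^2=s\), \(|e_j|^2=1\), \(|e_0|^2=s^{-1}\) realizes exactly these norms on the adjoint bundle. By Lemma~\ref{lem:finite-adjoint-reconstruction}, an adjoint Hodge metric determines the pair \((h_W,h_L)\) up to a common scalar. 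Normalizing by \(|e_1||e_0|=1\) removes the scalar, so the lift satisfies the displayed estimates. The main obstacle is this transfer from the adjoint metric to the lift. Mutual boundedness on \(\End_0\) controls \(h_W\otimes h_L^{-1}\) on \(\Hom(W,L)\) and \(h_W\) on trace-free \(\End W\), and from these one recovers \(h_W\) and \(h_L\) up to bounded factors once the normalization is imposed. I would first check that the off-diagonal terms between \(e_1\) and the \(e_j\) are controlled by the same estimate, using the Hodge orthogonality of Lemma~\ref{lem:adjoint-hodge-tensor}.

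Finally, compute \(\tau^*h_H\). The map \(\tau\) sends \(z_1\partial_{z_1}\mapsto e_1^\vee\otimes e_0\) and \(\partial_{z_j}\mapsto e_j^\vee\otimes e_0\). Hence
\[
   |\partial_{z_1}|^2\sim|z_1|^{-2}s^{-1}s^{-1}=|z_1|^{-2}s^{-2},
   \qquad
   |\partial_{z_j}|^2\sim s^{-1}.
\]
Cross terms are bounded by the same products because the model is asymptotically orthogonal. This gives \eqref{eq:model-pullback}, and that quantity depends only on the projective reduction, which is the independence claim.
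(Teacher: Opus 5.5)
Your proposal is correct and follows essentially the same route as the paper. The weight filtration of the residue \(N_1\) gives weights \(1,0,-1\) to \(e_1,e_j,e_0\). Mochizuki's norm estimates for tame nilpotent harmonic bundles with trivial parabolic structure then give the model metric, and the Kodaira--Spencer computation of the Hom metric yields \eqref{eq:model-pullback}.

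The one difference is a step the paper leaves implicit. You apply the norm estimate to the actual harmonic bundle \(\End_0(\cE)\) and then transfer it to the normalized lift: mutual boundedness on \(\End_0\) bounds the lift against the model up to a scalar, and \(|e_1|\,|e_0|=1\) fixes that scalar. This transfer is sound.

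One small correction: your aside that points of \(D^p\cap D^c\) are ``handled identically'' is literally true only for standard weights. For \(q_i<p_i-1\), the \(w_i\)-direction of \(\tau^*h_H\) acquires the factor \(|w_i|^{2(p_i-q_i-1)}\). This is harmless here, since the lemma, like the paper's proof, is used only at generic points of \(D^p\).
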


\begin{proof}
The principal reduction is projective, so only the metric on \(\Hom(\cE^{1,0},\cE^{0,1})\) is intrinsic. The displayed normalization is a local choice of lift; multiplying the lift by a common scalar changes the metrics on \(\cE^{1,0}\) and \(\cE^{0,1}\) simultaneously and leaves the Hom metric unchanged.

Along \(D^p\), the parabolic structure is trivial and the residues of the harmonic bundle are nilpotent. At a generic point of the cusp divisor the only non-zero residue of the Higgs field on \(E\) is
\[
   N(e_1)=e_0,
   \qquad N(e_j)=0\quad(j\ne1).
\]
The associated nilpotent-orbit model puts \(e_1\) and \(e_0\) in opposite weights, while the transverse frames \(e_j\), \(j\ge2\), have weight zero. Mochizuki's norm estimates for tame nilpotent harmonic bundles with trivial parabolic structure identify the harmonic metric, up to mutual boundedness, with this model. After the above local normalization one obtains
\[
   |e_1|^2\sim s,
   \qquad |e_0|^2\sim s^{-1},
   \qquad |e_j|^2\sim1\quad(j\ge2),
\]
where \(s=-\log|z_1|^2\).

The Kodaira--Spencer map sends
\[
z_1\frac{\partial}{\partial z_1}\longmapsto e_1^\vee\otimes e_0,
\qquad
\frac{\partial}{\partial z_j}\longmapsto e_j^\vee\otimes e_0\quad(j\ge2).
\]
The induced Hom metric is independent of the scalar normalization and satisfies
\[
|e_1^\vee\otimes e_0|^2\sim s^{-2},
\qquad
|e_j^\vee\otimes e_0|^2\sim s^{-1}\quad(j\ge2).
\]
Thus
\[
\left|\frac{\partial}{\partial z_1}\right|^2\sim \frac{1}{|z_1|^2s^2},
\qquad
\left|\frac{\partial}{\partial z_j}\right|^2\sim \frac{1}{s}\quad(j\ge2),
\]
which is \eqref{eq:model-pullback}.
\end{proof}

\begin{lemma}\label{lem:cusp-complete-volume}
The metric \(\tau^*h_H\) is complete and has finite volume near \(D^p\).
\end{lemma}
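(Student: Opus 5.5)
The plan is to reduce everything to the explicit model \eqref{eq:model-pullback} of Lemma~\ref{lem:model-metric}. That model is the product of a one-dimensional Poincaré punctured-disc metric with a transverse factor of size $s^{-1}$, where $s=-\log|z_1|^2$. By Proposition~\ref{prop:Dp-smooth}, near every point of $D^p$ we may choose coordinates with $D^p=\{z_1=0\}$. Since $D^p$ is compact, finitely many such polydisc charts $\Delta^*\times\Delta^{n-1}$ cover a punctured neighbourhood of $D^p$.

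Mutual boundedness with the model is a quasi-isometry. It therefore preserves completeness up to rescaling distances by a constant, and it preserves finiteness of volume. So it suffices to prove both properties for the model metric
\[
g_0=\frac{\sqrt{-1}\,dz_1\wedge d\bar z_1}{|z_1|^2 s^2}+\sum_{j\ge2}\frac{\sqrt{-1}\,dz_j\wedge d\bar z_j}{s}.
\]
On $\cX^o$ away from $D^p$, the metric $\tau^*h_H$ is an orbifold metric on a region that is relatively compact in the coarse sense, so only the cusp is relevant. Near $D^c\cap D^p$ we pass to root charts. There, in the standard-weight case, the metric is a genuine orbifold metric in the $w$-variables. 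In the branched case it is degenerate but still bounded above, which is all the volume estimate needs.

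\emph{Finite volume.} The volume form of $g_0$ is, up to a constant,
\[
\frac{1}{|z_1|^2 s^2}\cdot s^{-(n-1)}\, dV_{\rm eucl} = \frac{dV_{\rm eucl}}{|z_1|^2 s^{n+1}}.
\]
In polar coordinates $r=|z_1|$ we have $\int_0^{1/2} r^{-1}(-2\log r)^{-(n+1)}\,dr<\infty$, because $n+1\ge 2$. The transverse integral is over a bounded polydisc. Summing over the finitely many charts gives finite volume near $D^p$.

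\emph{Completeness.} The key estimate is $g_0\ge \sqrt{-1}\,dz_1\wedge d\bar z_1/(|z_1|^2 s^2)$. Hence the length of any path leaving to $z_1=0$ is bounded below by its radial Poincaré length,
\[
\int \frac{|dr|}{r\,|\log r^2|}=\tfrac12\bigl|\,d\log(-\log r^2)\,\bigr|\ \text{integrated},
\]
and this diverges as $r\to0$. Consequently every path that starts at a fixed compact set and exits every compact subset of $\cX^o$ has infinite length. To turn this into completeness, I will use that $\log s$ is a proper exhaustion function near $D^p$ with bounded gradient: its gradient norm is $|d\log s|_{g_0}\lesssim 1$. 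A function with these properties forces closed bounded sets to be compact, and Hopf--Rinow then gives completeness. The transverse directions are bounded, so they impose no extra condition.

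\emph{Main obstacle.} The delicate step is justifying that the comparison of Lemma~\ref{lem:model-metric} is uniform up to $D^p$, including at points of $D^p\cap D^c$. There the norm estimate must be combined with the root-chart description from Proposition~\ref{prop:root-stack-KH}. I would handle this by working on the finite root chart, where $D^p+\widetilde{D^c}$ is still normal crossing. Mochizuki's estimates apply to this chart, with weight zero in the root frames. In the standard-weight case I would also use that $\iota$ is an isomorphism by Proposition~\ref{prop:E10-vs-cotangent}, so the model is just the product with a smooth metric in the $w_i$. The volume computation is unaffected because the $w_i$-factor is bounded. The completeness argument uses only the $z_1$-direction, so it is unaffected as well.
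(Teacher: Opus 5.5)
Your proposal is correct and follows the paper's proof: both reduce to the model \eqref{eq:model-pullback} of Lemma~\ref{lem:model-metric}, integrate the volume density $dV_{\rm eucl}/(|z_1|^2 s^{n+1})$ in polar coordinates, and get completeness from the divergence of the radial Poincar\'e length $\int dr/(r|\log r|)$. Your exhaustion function $\log s$ with bounded gradient and your root-chart treatment of points of $D^p\cap D^c$ spell out steps the paper leaves implicit. One small correction there: in the standard case the $w_i$-direction behaves like the other transverse directions, with factor $|dw_i|^2/s$, not a smooth metric; this changes neither the volume estimate nor the completeness estimate.
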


\begin{proof}
Completeness follows from the normal Poincar\'e factor. Along a path with \(|z_1|=r\to0\), the length is bounded below by a constant multiple of
\[
\int_0^\varepsilon \frac{dr}{r|\log r|}=+\infty.
\]
For finite volume, Lemma~\ref{lem:model-metric} gives a local volume density mutually bounded by
\[
\frac{dV_{\rm eucl}}{|z_1|^2|\log |z_1|^2|^{n+1}}.
\]
In polar coordinates \(z_1=re^{\sqrt{-1}\theta}\), the normal integral is
\[
\int_0^\varepsilon\frac{r\,dr}{r^2|\log r^2|^{n+1}}
=
\frac12\int_{-\log\varepsilon^2}^{+\infty}s^{-(n+1)}\,ds<\infty.
\]
\end{proof}

\begin{lemma}\label{lem:compact-branch-metric}
Near a generic point of \(\cD_i^c\), with root coordinate \(w_i\) and \(m_i=p_i-q_i\), the pull-back of the ball metric is mutually bounded in the normal direction by
\[
|w_i|^{2(m_i-1)}\,|dw_i|^2.
\]
In particular, it has finite local volume. It is non-degenerate across \(\cD_i^c\) if and only if \(m_i=1\).
\end{lemma}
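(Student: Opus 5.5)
The approach is to combine the local normal form of the period map from Proposition~\ref{prop:branching} with the fact that \(\omega_{\mathbb B}\) is a smooth, nondegenerate Kähler metric near the image point. Fix a generic point \(x\in\cD_i^c\), away from \(D^p\) and from the other compact components. Take a root chart with coordinates \((w_i,y)\), where \(y\) are the tangential coordinates and \(z_i=w_i^{p_i}\). By Proposition~\ref{prop:cover-manifold}, the chart lifts to the universal cover as an honest manifold chart. By Proposition~\ref{prop:branching} and Lemma~\ref{lem:crossing-normal-form}, there are holomorphic coordinates \((\xi_i,\xi')\) on \(\bB^n\), centered at \(\mathcal P(x)\), such that
\[
\xi_i\circ\mathcal P=v(w)w_i^{m_i},\qquad v(0)\ne0,
\]
with \(m_i=p_i-q_i\). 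The tangential components \(\xi'\circ\mathcal P\) have invertible differential in the \(y\)-directions: \(\iota\) is an isomorphism on the non-normal directions by Proposition~\ref{prop:E10-vs-cotangent}, and \(d\mathcal P=\tau\) is dual to \(\iota\) (Proposition~\ref{prop:period}).

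Near \(\mathcal P(x)\), the Bergman metric \(\omega_{\mathbb B}\) is mutually bounded with the Euclidean metric \(\sum|d\xi|^2\), since \(\mathcal P(x)\) lies in the interior of the ball. So it suffices to estimate \(\mathcal P^*\bigl(\sum|d\xi|^2\bigr)\). Replacing \(w_i\) by \(w_i v^{1/m_i}\), which is a legitimate coordinate change after shrinking since \(v(0)\neq0\), we get \(\xi_i\circ\mathcal P=w_i^{m_i}\). Then
\[
d\xi_i=m_i w_i^{m_i-1}dw_i,
\]
and the normal part of the pull-back is \(m_i^2|w_i|^{2(m_i-1)}|dw_i|^2\). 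This is the claimed normal bound. The tangential part is mutually bounded with \(\sum|dy|^2\).

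The only real point is the mixed terms: the tangential components may still depend on \(w_i\), and the coordinate change mixes coordinates. I would handle this by writing the pull-back as \(|J^*\eta|^2\), where \(J\) is the Jacobian matrix in the adapted frame \((w_i^{m_i-1}dw_i,\,dy)\). In that frame \(J\) is holomorphic and invertible at \(0\), by the duality between \(\tau\) and \(\iota\). Hence the pull-back is mutually bounded with
\[
|w_i|^{2(m_i-1)}|dw_i|^2+\sum|dy|^2,
\]
including the cross terms.

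For finite local volume, the volume form is bounded by \(|w_i|^{2(m_i-1)}\) times the Euclidean volume, which is integrable since \(m_i\ge1\). Alternatively, \(\mathcal P\) is locally finite of degree \(m_i\) onto a bounded region of \(\bB^n\), where the Bergman volume is finite. Finally, the metric is nondegenerate across \(\cD_i^c\) exactly when the factor \(|w_i|^{2(m_i-1)}\) does not vanish at \(w_i=0\), that is, iff \(m_i=1\), i.e.\ \(q_i=p_i-1\). This is consistent with Proposition~\ref{prop:branched-equality}(3).
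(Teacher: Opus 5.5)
Your proposal is correct and follows essentially the same route as the paper: the normal form \(\xi_i\circ\mathcal P=v(w)w_i^{m_i}\) from Proposition~\ref{prop:branching}, together with smoothness of \(\omega_{\mathbb B}\) near the interior point \(\mathcal P(x)\), gives \(|d\xi_i|^2=m_i^2|w_i|^{2(m_i-1)}|dw_i|^2\), hence integrability and the criterion \(m_i=1\). Your Jacobian argument in the adapted frame \((w_i^{m_i-1}dw_i,dy)\), justified by \(d\mathcal P=\tau=\iota^\vee\), rigorously controls the cross and tangential terms that the paper only dismisses as ``higher-order tangential terms''; it also makes your preliminary coordinate change unnecessary.
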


\begin{proof}
By Proposition~\ref{prop:branching}, the local normal component of the period map is \(\xi_i=w_i^{m_i}\), up to multiplication by a unit and up to higher-order tangential terms. The smooth ball metric in the \(\xi_i\)-direction pulls back as
\[
|d\xi_i|^2=m_i^2|w_i|^{2(m_i-1)}|dw_i|^2,
\]
up to mutual boundedness by positive functions. The volume contribution is locally integrable because
\[
\int_0^\varepsilon r^{2(m_i-1)}r\,dr<\infty.
\]
The metric is non-degenerate at \(w_i=0\) precisely when \(m_i=1\).
\end{proof}

\subsection{The unramified standard case}\label{sec:standard-case}

In the standard orbifold case the period map is unramified, so we use the standard complete local-isometry fact: a local isometry from a complete connected Riemannian manifold to a complete connected Riemannian manifold is a covering map; if the source and target are simply connected, it is a global isometry.

\begin{proposition}\label{prop:standard-toroidal-identification}
Assume that \(q_i=p_i-1\) for every \(i\), and assume that the period map identifies
\[
   \cX^o \simeq [\bB^n/\Gamma]
\]
for a finite-volume lattice \(\Gamma\subset\PU(n,1)\). Then \((\cX,D^p)\) is the canonical orbifold toroidal compactification of \([\bB^n/\Gamma]\) in the sense of Definition~\ref{def:canonical-orbifold-toroidal}.
\end{proposition}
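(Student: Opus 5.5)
The plan is to compare the given compactification $(\cX,D^p)$ with $(\mathcal T_\Gamma,D_\Gamma^{\rm tor})$ after passing to a neat finite-index normal subgroup $\Gamma'\triangleleft\Gamma$. Since $\Gamma'$ is neat, $U'=\bB^n/\Gamma'$ is a manifold. The corresponding finite étale (in the orbifold sense) cover of $\cX^o$ is also a manifold, because the stabilizers of $\cX^o$ inject into $\Gamma$ (Proposition~\ref{prop:cover-manifold}) and hence meet $\Gamma'$ trivially. Let $Y\to\cX$ be the normalization of $\cX$ in the function field of $U'$, and let $F=\Gamma/\Gamma'$. Then $F$ acts on $Y$ with $Y/F=X$. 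Along the compact divisors $D_i^c$ the stabilizers are exactly the root stabilizers $\mu_{p_i}$ (Proposition~\ref{prop:branched-equality}(5), with $q_i=p_i-1$), so $Y$ is smooth over a neighbourhood of $D^c$ away from $D^p$. The remaining task is to show that $(Y,D^p_Y)$ is the smooth toroidal compactification $(\overline U^{\rm tor}_{\Gamma'},D^{\rm tor}_{\Gamma'})$.

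Near the cusp, I would use Lemma~\ref{lem:model-metric} and Lemma~\ref{lem:cusp-complete-volume}. By Proposition~\ref{prop:Dp-smooth}, $D^p$ is smooth, and the local monodromy around it is unipotent-parabolic (Proposition~\ref{prop:branched-equality}(5)). On the neat cover, the pull-back of $D^p$ is a divisor near which the Bergman metric has Poincaré growth $|dz_1|^2/(|z_1|^2\log^2|z_1|)+|dz_j|^2/(-\log|z_1|)$. This is precisely the asymptotic model of the Mumford/Ash--Mumford--Rapoport--Tai toroidal compactification of a neat ball quotient, where the boundary divisors are disjoint abelian varieties. I would then invoke the rigidity theorem of Deng--Cadorel \cite[Theorem~A.7]{DengCadorel2022}. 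For a smooth projective compactification of a neat ball quotient whose boundary is a smooth divisor and along which the complete Bergman metric has this Poincaré-type growth, the theorem identifies the compactification with the toroidal one. This gives an isomorphism $(Y_{\rm sm},D_Y)\simeq(\overline U^{\rm tor}_{\Gamma'},D^{\rm tor}_{\Gamma'})$ extending the identity on $U'$. Before applying the theorem, I would first resolve or normalize $Y$ near the preimage of $D^p$, and then check that $Y$ is already smooth there. This follows because $\pi_1$ of a punctured neighbourhood maps onto the cusp stabilizer in $\Gamma$, while $\Gamma'$ neat implies that the Kummer cover $q=t^m$ is absorbed by the logarithmic structure.

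By uniqueness of normalization, the isomorphism on $U'$ is $F$-equivariant, so it extends $F$-equivariantly to the compactifications. Taking quotients gives $(X,D^p)\simeq(\overline U^{\rm tor}_{\Gamma'}/F,D^{\rm tor}_{\Gamma'}/F)=(\overline U^{\rm tor}_\Gamma,D^{\rm tor}_\Gamma)$ on coarse spaces. Under this isomorphism, $D_i^c$ is matched with the closure of the elliptic mirror whose stabilizer has order $p_i$, by Proposition~\ref{prop:branched-equality}(5). The local forms $z_i=w_i^{p_i}$ and $q=t^m$ coincide on both sides and do not mix, since $D^p$ and $D^c$ are transverse snc components. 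Hence the root stacks $X[\sqrt[p_i]{D_i^c}]$ and $\mathcal T_\Gamma$ agree, which gives $(\cX,D^p)\simeq(\mathcal T_\Gamma,D_\Gamma^{\rm tor})$.

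The main obstacle is the cusp step. It requires checking that the hypotheses of Deng--Cadorel's rigidity theorem hold on the neat cover, where one only has a priori a normal variety $Y$. In particular, one must show that $Y$ is smooth along the boundary and that the boundary is smooth, rather than merely that $D^p$ is smooth downstairs. I would first attempt this using the Kummer structure near $D^p$. The local fundamental group is generated by the parabolic loop and $\pi_1$ of $D^p$. Its image in $\Gamma'$ is a torsion-free lattice in a Heisenberg-type parabolic subgroup, which forces the normalization to be the toroidal Kummer chart $q=t^m$.
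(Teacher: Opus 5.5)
\textbf{Overall.} Your overall route is the paper's: a neat normal subgroup $\Gamma'\triangleleft\Gamma$, normalization over the coarse space, the Deng--Cadorel rigidity theorem fed by the cusp model of Lemma~\ref{lem:model-metric}, $F$-equivariant descent, and comparison of root structures. Normalize the coarse $X$, as the paper does, rather than the stack $\cX$, so that $Y$ is a normal variety.

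\textbf{Where you diverge.} The divergence, and the gap, lie in the step you flagged. You quote Deng--Cadorel for a \emph{smooth} compactification with smooth boundary, so smoothness of $Y$ along the boundary becomes an input you must prove. The paper instead uses \cite[Theorem~A.7]{DengCadorel2022} in its klt form. There it suffices that $Y'$ be klt and that the metric be adapted to log order at generic points of the divisorial boundary. The klt property is easy to get. On a root chart the pulled-back cover is finite \'etale off the cusp divisor. After the cyclic base change $q=t^m$ (Abhyankar), the normalization is finite \'etale over a smooth polydisc. So $(Y',E')$ is locally a finite quotient of a smooth snc pair. Smoothness of $Y'$ and $E'$ is then an output of the theorem, not a hypothesis.

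\textbf{Why your smoothness sketch fails.} First, it is aimed at the wrong group. Local smoothness of the normalization is governed by the local orbifold fundamental group of a punctured polydisc chart. At a point of $D^p$ lying on $\cD_i^c$, $i\in S$, this is the abelian group $\Z\times\prod_{i\in S}\mu_{p_i}$. It is not $\pi_1$ of a tubular neighbourhood of $D^p$, whose surjection onto a cusp stabilizer you have neither proved nor need. Away from $D^c$ the local group is $\Z$ and the normalization is the smooth chart $t^m=q$, so that case is fine.

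Second, at $D^p\cap D^c$ you justify non-mixing of the Kummer and root directions by transversality of $D^p$ and $D^c$ in $X$. That is not a valid reason. Suppose the kernel of $\Z\times\prod\mu_{p_i}\to F$ contained some $(k,g)$ with $g\neq1$. Then the normalization would have a cyclic quotient singularity over $D^p\cap D^c$, even though $X$ is smooth with snc boundary. For example, with $p=2$ and kernel generated by $(1,-1)$, the invariants $s^2=q$, $sw=u$, $w^2=z$ give $u^2=qz$, an $A_1$ singularity.

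\textbf{The missing idea.} What excludes mixing is neatness of $\Gamma'$; mere torsion-freeness would not suffice. The loop around $D^p$ maps to a unipotent parabolic $\gamma$ (Proposition~\ref{prop:peripheral}). It commutes with the elliptic element $\rho(g)$, and $\rho(g)\neq1$ by Proposition~\ref{prop:cover-manifold}. So $\gamma^k\rho(g)$ has the non-trivial root-of-unity eigenvalue ratios of $\rho(g)$, and cannot lie in $\Gamma'$. Hence:
\begin{itemize}
\item the kernel is $m\Z\times 1$;
\item the normalization is locally the smooth chart $(t,w,y)$ with $t^m=q$ and boundary $\{t=0\}$, a single branch by Proposition~\ref{prop:Dp-smooth};
\item the local deck group acts in product form $\mu_m\times\prod\mu_{p_i}$.
\end{itemize}
With this argument inserted, or by switching to the klt form of the rigidity theorem, your proof closes. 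The same computation also supplies the non-mixing product form that you need, and currently attribute to transversality, in the final root-stack comparison.
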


\begin{proof}
We first fix the three levels involved in the construction. The stack \(\cX=X[\sqrt[p_i]{D_i^c}]_{i\in I}\) has coarse moduli space \(X\), and
\[
   \cX^o=\cX\setminus\pi^{-1}(D^p),\qquad X^o=X\setminus D^p.
\]
The divisor \(D^c\) is not removed; it is encoded by stabilizers on \(\cX^o\). The auxiliary finite cover below is a cover of the quotient stack \(\cX^o\), while its compactification is taken over the coarse variety \(X\).

Choose a finite-index neat normal subgroup \(\Gamma'\triangleleft\Gamma\), put \(F=\Gamma/\Gamma'\), and set
\[
   U':=\bB^n/\Gamma'.
\]
Then \(U'\) is smooth, its parabolic elements are unipotent, and the natural map
\[
   U'=[\bB^n/\Gamma']\longrightarrow [\bB^n/\Gamma]\simeq\cX^o
\]
is finite, representable, Galois, and \'etale as a morphism of orbifolds, with group \(F\).
Let \(Y'\) be the normalization of the coarse projective variety \(X\) in the function field \(\C(U')\). Thus there is a finite morphism of normal projective varieties
\[
   \nu_X:Y'\longrightarrow X,
\]
and \(\nu_X^{-1}(X^o)=U'\). Notice that this is a morphism to the coarse space \(X\), not a morphism to the root stack \(\cX\). Over \(X^o\) the map is the coarse map induced by the representable orbifold cover \(U'\to\cX^o\). Put
\[
   E':=(\nu_X^{-1}D^p)_{\rm red}.
\]
Since \(D^c\subset X^o\), its preimage is contained in the open manifold \(U'\); it is not part of the boundary \(Y'\setminus U'\). Hence \(Y'\setminus U'=E'\).

We next verify the singularity hypothesis needed in the rigidity theorem of Deng--Cadorel~\cite[Theorem~A.7]{DengCadorel2022}. Let \(y\in Y'\), and write \(x=\nu_X(y)\). Choose analytic coordinates on \(X\) near \(x\) such that
\[
   D^p=\{z_1\cdots z_r=0\},\qquad
   D^c=\{x_1\cdots x_s=0\},
\]
with the compact component \(x_\beta=0\) carrying order \(p_\beta\). Pulling back to the root chart
\[
   x_\beta=w_\beta^{p_\beta}\qquad(1\le \beta\le s)
\]
removes the compact stabilizers. Since \(U'\to\cX^o\) is representable \'etale, its pull-back to this root chart is an ordinary finite \'etale cover away from the cusp boundary \(z_1\cdots z_r=0\). Along the cusp variables, the cover of the punctured polydisc is, after decomposing into connected components, given by finite-index subgroups of \(\Z^r\). By the analytic form of Abhyankar's lemma, after cyclic base changes
\[
   z_\alpha=t_\alpha^{m_\alpha}\qquad(1\le\alpha\le r)
\]
the normalization is finite \'etale over the smooth polydisc with coordinates \((t,w,\text{transverse})\). Therefore, locally analytically, the pair \((Y',E')\) is the quotient of a smooth simple-normal-crossing pair by a finite group preserving the boundary. In particular \((Y',E')\) has algebraic quotient singularities in the sense of \cite[Definition~A.6]{DengCadorel2022}; hence \(Y'\) has quotient singularities and is klt.

We recall the precise form of Deng--Cadorel's rigidity theorem used here. Let \(U=\bB^n/\Lambda\) with \(\Lambda\subset\PU(n,1)\) torsion-free. If \(\overline U\) is a klt compactification of \(U\), if \(B^{(1)}\) is the divisorial part of \(\overline U\setminus U\), and if the K\"ahler--Einstein metric on \(U\), regarded as a metric on \(T_{\overline U}(-\log B^{(1)})|_U\), is adapted to log order near the generic point of every component of \(B^{(1)}\), then \((\overline U,\overline U\setminus U)\) is the toroidal compactification of \(U\) \cite[Theorem~A.7]{DengCadorel2022}. In our situation \(U=U'\), \(\Lambda=\Gamma'\), \(\overline U=Y'\), and \(B^{(1)}=E'\).

It remains only to check the log-order metric hypothesis. The metric on \(U'\) obtained by pulling back the period metric is the Bergman metric of \(\bB^n/\Gamma'\). Near a generic point of a component of \(D^p\), Lemma~\ref{lem:model-metric} gives the model
\[
   \frac{\sqrt{-1}\,dz\wedge d\bar z}{|z|^2(\log |z|^2)^2}
   +\sum_{j=2}^n\frac{\sqrt{-1}\,du_j\wedge d\bar u_j}{-\log |z|^2}.
\]
After the finite normal cover \(z=t^m\), the normal Poincar\'e factor is preserved:
\[
   \frac{\sqrt{-1}\,dz\wedge d\bar z}{|z|^2(\log |z|^2)^2}
   =
   \frac{\sqrt{-1}\,dt\wedge d\bar t}{|t|^2(\log |t|^2)^2},
\]
up to the harmless constant convention in the logarithm. The tangential factors remain mutually bounded after the finite \'etale change in the transverse variables. Thus the Bergman metric is adapted to log order for \(T_{Y'}(-\log E')|_{U'}\) near the generic point of every component of \(E'\). The compact divisors \(D_i^c\) do not enter \(E'\), and on their preimages the cover is an ordinary smooth interior cover in root coordinates.

All hypotheses of Deng--Cadorel's theorem are therefore satisfied, and we obtain an isomorphism of compactifications
\[
   (Y',E')\simeq (\overline U_{\Gamma'}^{\rm tor},D_{\Gamma'}^{\rm tor}).
\]
The finite group \(F\) acts on \(U'\), and this action extends to \(Y'\) by the universal property of normalization over the fixed coarse compactification \(X\). Under the preceding identification, it is the standard extension of the \(F\)-action to the toroidal compactification, since the two extensions agree on the dense open set \(U'\) and the compactifications are normal and separated.

The finite morphism \(\nu_X:Y'\to X\) is \(F\)-invariant, hence descends to a finite morphism
\[
   Y'/F\longrightarrow X.
\]
It is an isomorphism over the dense open subset
\[
   U'/F\simeq \bB^n/\Gamma\simeq X^o.
\]
Both \(Y'/F\) and \(X\) are normal, so the finite birational morphism \(Y'/F\to X\) is an isomorphism. It carries \(E'/F\) to \(D^p\). Consequently
\[
   (X,D^p)\simeq
   (\overline U_{\Gamma'}^{\rm tor}/F,D_{\Gamma'}^{\rm tor}/F)
   =
   (\overline U_{\Gamma}^{\rm tor},D_{\Gamma}^{\rm tor})
\]
as coarse toroidal pairs.

Finally the stack structure is exactly the root structure prescribed in Definition~\ref{def:canonical-orbifold-toroidal}. The open stack is \([\bB^n/\Gamma]\), the generic stabilizer along \(D_i^c\) is \(\mu_{p_i}\), and no root structure is imposed along the cusp boundary \(D^p\). Hence
\[
   \cX=X\Bigl[\sqrt[p_i]{D_i^c}\Bigr]_{i\in I}
\]
is the canonical orbifold toroidal compactification of \([\bB^n/\Gamma]\). Since \(D_{\Gamma'}^{\rm tor}\) is a disjoint union of abelian varieties, each connected component of \(D^p\) is a finite quotient of an abelian variety.
\end{proof}

\begin{proof}[Proof of Theorem~\ref{thm:standard-uniformization}]
If \(q_i=p_i-1\) for every \(i\), Proposition~\ref{prop:E10-vs-cotangent} gives
\[
\cE^{1,0}=\Omega^1_{\cX}(\log D^p),
\]
and the Kodaira--Spencer map \(\tau\) is an isomorphism everywhere on \(\cX^o\). Therefore the period map
\[
\mathcal{P}:\widetilde{\cX^o}\to\bB^n
\]
is a local biholomorphism. Let \(\omega_H:=\tau^*h_H\) be the Hodge metric on \(\cX^o\). On the universal cover one has \(\pi^*\omega_H=\mathcal{P}^*\omega_{\bB}\), where \(\omega_{\bB}\) is the invariant metric of \(\bB^n\) with the corresponding normalization. Thus \(\mathcal P\) is a local isometry for this Hodge metric.

By Lemma~\ref{lem:cusp-complete-volume}, \(\omega_H\) is complete near \(D^p\). Away from \(D^p\), the space is covered by finitely many compact orbifold charts after shrinking near the boundary. In the standard case the compact root divisors are not branch divisors, so the metric is non-degenerate across them in the orbifold sense. Hence no additional incomplete end occurs, and \(\widetilde{\cX^o}\) is complete for the Hodge metric. Since \(\bB^n\) is complete, connected and simply connected, the standard complete local-isometry argument gives
\[
\widetilde{\cX^o}\simeq\bB^n.
\]
The \(\rho\)-equivariance of \(\mathcal{P}\) identifies the orbifold deck group with the discrete subgroup
\[
\Gamma=\rho\bigl(\pi_1^{\rm orb}(\cX^o)\bigr)\subset \PU(n,1),
\]
so \(\rho\) is faithful and \(\Gamma\) is discrete. The volume of \(\bB^n/\Gamma\) equals the volume of \(\cX^o\) for the Hodge metric. Lemma~\ref{lem:cusp-complete-volume} gives finite volume near \(D^p\), and compactness away from \(D^p\) gives finite total volume. Hence \(\Gamma\) is a lattice.

Proposition~\ref{prop:standard-toroidal-identification} identifies \((\cX,D^p)\) with the canonical orbifold toroidal compactification of \([\bB^n/\Gamma]\). This proves the theorem.
\end{proof}

\subsection{Local monodromy}

\begin{proposition}\label{prop:peripheral}
Peripheral loops around \(D^p\) map to parabolic elements of \(\PU(n,1)\). Local orbifold loops around \(D_i^c\) map to finite-order elliptic elements; the normal character has order \(p_i\).
\end{proposition}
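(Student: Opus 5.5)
The plan is to treat the two kinds of loops separately. Around \(D^p\) I would use the residue and weight-filtration structure of the tame harmonic bundle. Around \(D_i^c\) I would use the local equivariant normal form of the period map already obtained in Lemma~\ref{lem:crossing-normal-form} and Proposition~\ref{prop:cover-manifold}.

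\emph{Compact divisors.} Let \(x\) be a generic point of \(\cD_i^c\) in \(\cX^o\), and let \(\gamma\) be the generator of the local stabilizer \(\mu_{p_i}\), viewed in \(\pi_1^{\rm orb}(\cX^o)\) via \(\lambda_x\). The proof of Proposition~\ref{prop:cover-manifold} shows that \(\rho\circ\lambda_x\) is injective. Hence \(\rho(\gamma)\) has order exactly \(p_i\); in particular it is non-trivial and of finite order. By equivariance of \(\mathcal P_U\), the element \(\rho(\gamma)\) fixes \(\mathcal P(x)\in\bB^n\), so it is elliptic. In the linearized coordinates of Lemma~\ref{lem:crossing-normal-form}, it acts on the normal coordinate by \(\xi_i\mapsto\zeta^{p_i-q_i}\xi_i\). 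Since \(\gcd(p_i,p_i-q_i)=1\), this character has order \(p_i\). The same argument at crossing points gives the statement about \(\prod_{i\in S}\mu_{p_i}\).

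\emph{Cusp divisor.} By Proposition~\ref{prop:Dp-smooth}, near a generic point of \(D^p\) we may take \(D^p=\{z_1=0\}\). The adjoint harmonic bundle \(\End_0(\cE)\) is tame along \(D^p\), has trivial parabolic structure there, and has nilpotent Higgs residue \(\ad_N\) with \(N(e_1)=e_0\). I will then invoke Mochizuki's description (equivalently Simpson's table in the curve case, applied to a transverse disc): a tame harmonic bundle with zero parabolic weights and nilpotent Higgs residue has a flat connection whose local monodromy is unipotent. Moreover, the weight filtration of its logarithm agrees with \(W(\ad_N)\). Since \(\ad_N\neq0\) (for instance \(\ad_N^2\neq0\), as computed in the proof of Proposition~\ref{prop:Dp-smooth}), the filtration \(W(\ad_N)\) is non-trivial. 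Hence the adjoint local monodromy \(\Ad\rho(\gamma)\) is a non-trivial unipotent element. Because \(\PU(n,1)\) is adjoint, \(\rho(\gamma)\) is a non-trivial unipotent element of \(\PU(n,1)\). Such an element is neither elliptic (elliptic elements are semisimple) nor loxodromic, so it is parabolic.

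As a cross-check, and as a fallback if quoting the unipotence statement is delicate, I would use the metric. By Lemma~\ref{lem:model-metric}, the loop \(|z_1|=r\) has hyperbolic length \(\sim 1/|\log r|\to0\). So \(\inf_p d(p,\rho(\gamma)p)=0\), which rules out loxodromic elements. If \(\rho(\gamma)\) were elliptic, the flat bundle near the puncture would have finite monodromy, and the harmonic metric would be bounded near \(D^p\). That contradicts the growth \(|e_1|^2\sim s\). I expect the main obstacle to be precisely this cusp step: justifying that trivial parabolic weights together with a nilpotent Higgs residue force unipotent (rather than merely quasi-unipotent) monodromy, and that it is non-trivial. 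The compact part is essentially already contained in earlier results.
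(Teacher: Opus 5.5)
Your proposal is correct and follows essentially the same route as the paper. Along \(D_i^c\) you use the equivariant normal form \(\xi_i\mapsto\zeta^{p_i-q_i}\xi_i\) with \(\gcd(p_i,p_i-q_i)=1\), usefully adding that \(\rho(\gamma)\) fixes \(\mathcal P(x)\) and has order exactly \(p_i\) by Proposition~\ref{prop:cover-manifold}; along \(D^p\) you use unipotence of the local monodromy of a tame nilpotent harmonic bundle with trivial parabolic structure plus non-vanishing of the residue, which you make slightly more explicit through \(W(\ad_N)\).

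The only caveat is that your metric fallback by itself only excludes loxodromic monodromy. The implication from elliptic to finite monodromy to a bounded harmonic metric is not justified as written: elliptic elements of \(\PU(n,1)\) may have infinite order, and boundedness of the metric would need its own argument. So the unipotence argument should remain the actual proof.
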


\begin{proof}
Along \(D^p\), the harmonic bundle is tame nilpotent with trivial parabolic structure. Hence the local monodromy around a component of \(D^p\) is unipotent. The residue is non-zero because the Higgs residue sends the logarithmic frame \(d\log z_1\) to \(1\). Therefore the corresponding element of \(\PU(n,1)\) is a non-trivial unipotent isometry of complex hyperbolic space, hence parabolic.

Along \(D_i^c\), the divisor is not removed. It is part of the orbifold structure. On the root chart \(z_i=w_i^{p_i}\), the local stabilizer \(\mu_{p_i}\) acts by \(w_i\mapsto\zeta w_i\). The period normal coordinate is \(\xi_i=w_i^{p_i-q_i}\), so the induced action on the target normal coordinate is
\[
\xi_i\longmapsto \zeta^{p_i-q_i}\xi_i.
\]
Because \(\gcd(p_i,p_i-q_i)=\gcd(p_i,q_i)=1\), this character has order \(p_i\). Hence the local monodromy is elliptic of order \(p_i\) in the normal direction.
\end{proof}

\begin{proof}[Proof of Proposition~\ref{prop:branched-equality}]
Proposition~\ref{prop:flat-adjoint} gives flatness of the adjoint harmonic bundle. Proposition~\ref{prop:principal-system} supplies the principal \(\PU(n,1)\)-system, and Proposition~\ref{prop:period} gives the period map. Proposition~\ref{prop:branching} gives the ramification index, while Proposition~\ref{prop:E10-vs-cotangent} gives the unramified criterion. Proposition~\ref{prop:cover-manifold} proves that the orbifold universal cover is represented by a simply connected complex manifold and that compact local stabilizers act faithfully in the normal period coordinates. The assertions on \(D^p\) and on peripheral monodromy follow from Propositions~\ref{prop:Dp-smooth} and~\ref{prop:peripheral}.
\end{proof}

\section{The Converse}\label{sec:converse}

The converse is formulated on the coarse space, with the branching data measured on the root charts.

Put
\[
   X^\circ:=X\setminus(D^p\cup D^c),\qquad
   \Delta:=D^p+\sum_{i\in I}\frac{q_i}{p_i}D_i^c.
\]
The coefficients of \(D^p\) are equal to one, while the compact coefficients are the parabolic weights.

\begin{definition}\label{def:mixed-growth}
Let \(T\) be a closed positive \((1,1)\)-current on \(X\) such that \(T|_{X^\circ}\) is a smooth K\"ahler form. We say that \(T\) has at most mixed Poincar\'e--cone growth of type \(\Delta\) if, near every point of \(D^p+D^c\), there are coordinates
\[
   (z_1,\ldots,z_r,x_1,\ldots,x_s,y_1,\ldots,y_t)
\]
with \(D^p=\{z_1\cdots z_r=0\}\) and \(D^c=\{x_1\cdots x_s=0\}\), such that, as currents,
\[
   0\leq T\leq C\,\omega_{\rm mix},
\]
where
\[
   \omega_{\rm mix}:=
   \sum_{\alpha=1}^r \frac{\sqrt{-1}\,dz_\alpha\wedge d\bar z_\alpha}
        {|z_\alpha|^2(\log |z_\alpha|^2)^2}
   +\sum_{\beta=1}^s \frac{\sqrt{-1}\,dx_\beta\wedge d\bar x_\beta}{|x_\beta|^{2a_\beta}}
   +\sum_\gamma \sqrt{-1}\,dy_\gamma\wedge d\bar y_\gamma,
\]
and, if \(\{x_\beta=0\}=D_{i_\beta}^c\), then \(a_\beta=q_{i_\beta}/p_{i_\beta}\).
\end{definition}

\begin{definition}\label{def:admissible}
A class \(\alpha\in H^{1,1}(X,\mathbb R)\) is called \(\Delta\)-admissible if \(\alpha\) is big and nef and contains a closed positive current \(T\in\alpha\) satisfying the two conditions of Definition~\ref{def:mixed-growth}.
\end{definition}

\begin{lemma}\label{lem:mixed-current-big-nef}
Let \(T\) be a closed positive \((1,1)\)-current on \(X\) such that \(T|_{X^\circ}\) is a smooth K\"ahler form and \(T\) has at most mixed Poincar\'e--cone growth of type \(\Delta\). Then the cohomology class \([T]\) is big and nef. Consequently \([T]\) is \(\Delta\)-admissible.
\end{lemma}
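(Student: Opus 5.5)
The plan is to prove nefness and bigness separately, using only that \(T\) is closed and positive, that it is a smooth Kähler form on \(X^\circ\), and that it satisfies the local bound \(T\le C\,\omega_{\rm mix}\). The key observation is that \(\omega_{\rm mix}\) has finite mass and places no mass on \(|D|\).

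\textbf{No mass on the divisor.} First I would check that \(T\) puts no mass on \(|D^p+D^c|\).

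\begin{itemize}
\item The Poincaré factor has finite normal integral: \(\int_0 r\,dr/(r^2(\log r^2)^2)<\infty\).
\item Each cone factor \(|x_\beta|^{-2a_\beta}\) with \(a_\beta<1\) is locally integrable.
\item Hence \(\omega_{\rm mix}\) is locally integrable and charges no proper analytic set.
\end{itemize}

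By the Siu decomposition, \(T\) therefore has no divisorial part along \(D\). Since \(T\) is smooth on \(X^\circ\), its Lelong numbers vanish there. The local bound then controls the local potential of \(T\) near \(D\). Concretely, \(\omega_{\rm mix}\) admits local potentials of the form \(-\log(-\log|z_\alpha|^2)\) plus \(|x_\beta|^{2-2a_\beta}\) plus smooth terms, and these have zero Lelong numbers. So I expect all Lelong numbers of \(T\) to vanish everywhere.

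\textbf{Nefness.} By Demailly's regularization, a closed positive current with identically vanishing Lelong numbers has a nef class. This is the step I expect to be the main obstacle. Pointwise domination \(T\le C\omega_{\rm mix}\) does not by itself imply \(\nu(T,x)\le C\nu(\omega_{\rm mix},x)\) for a general comparison of currents. To pass from domination to a bound on potentials I would argue as follows.

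\begin{itemize}
\item Write \(T=dd^c\varphi\) locally, and \(C\omega_{\rm mix}-T=dd^c\psi\) with \(\psi\) plurisubharmonic.
\item This gives \(\varphi=C\Phi_{\rm mix}-\psi\).
\item Since \(\psi\) is psh it is bounded above, so \(\varphi\ge C\Phi_{\rm mix}-M\).
\item Because \(\Phi_{\rm mix}\) has only \(\log\log\) singularities, this lower bound forces \(\nu(\varphi,x)=0\).
\end{itemize}

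The point needing care is that the local potential \(\Phi_{\rm mix}\) really exists with \(dd^c\Phi_{\rm mix}\ge c\,\omega_{\rm mix}\), which may require using a quasi-isometric model instead of \(\omega_{\rm mix}\) itself.

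\textbf{Bigness.} \(T\) is a Kähler current on compact subsets of \(X^\circ\) but not globally. Instead I would use the nef criterion \(\int_X\langle\alpha^n\rangle>0\) (Demailly–Păun / Boucksom): for nef classes, bigness is equivalent to \(\alpha^n>0\). Since \(T\) has bounded-type (in fact zero-Lelong, \(\log\log\)-type) potentials, the non-pluripolar product \(\langle T^n\rangle\) computes \(\alpha^n\). This holds by Boucksom–Eyssidieux–Guedj–Zeriahi monotonicity for currents of full Monge–Ampère mass, where full mass follows from the finite volume of \(\omega_{\rm mix}^n\), using the same integrals as in Lemma~\ref{lem:cusp-complete-volume} and Lemma~\ref{lem:compact-branch-metric}. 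Then
\[
\alpha^n=\int_{X^\circ}T^n>0,
\]
since \(T\) is Kählerian on the nonempty open set \(X^\circ\).

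The \(\Delta\)-admissibility claim then follows immediately from Definition~\ref{def:admissible}.
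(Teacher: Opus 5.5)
Your Lelong-number and nefness steps are the paper's argument. The local potentials $-\log(-\log|z|^2)$ and $|x|^{2(1-a)}$ have zero Lelong number, hence so does $T$, and Demailly's regularization then gives nefness.

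The obstacle you flag there is not real. If $0\le T\le S$ with $T,S$ closed and positive, then $\nu(T,x)\le\nu(S,x)$. This is because $\nu$ is the limit of the normalized masses of $T\wedge(dd^c|z|^2)^{n-1}$ on shrinking balls, and these masses are monotone in the current. The paper uses exactly this monotonicity ("Lelong numbers are monotone for positive currents"). Your comparison $\varphi\ge C\Phi_{\rm mix}-M$ is a correct but unnecessary detour. Also, $\Phi_{\rm mix}$ exists with $dd^c\Phi_{\rm mix}=\omega_{\rm mix}$ up to positive constants on each term, since $\omega_{\rm mix}$ is closed as a current.

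The bigness step is where you depart from the paper, and as written its justification is wrong. You assert full Monge--Amp\`ere mass, $\int_X\langle T^n\rangle=\alpha^n$, on the grounds that $\omega_{\rm mix}^n$ has finite volume. Finite volume only bounds $\int_{X^\circ}T^n$ from above. Full mass is the opposite kind of statement: it says no mass is lost on the pluripolar set $|D|$. It would need a separate argument, for instance a comparison of singularity types, which you do not give. Your description of the potentials as "bounded-type" is also inaccurate: they have $\log\log$ poles along $D^p$, so Bedford--Taylor theory does not apply directly.

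Fortunately equality is not needed. For every closed positive current $T$ in a pseudoeffective class, $\int_X\langle T^n\rangle\le\mathrm{vol}(\alpha)$. This is the Boucksom--Eyssidieux--Guedj--Zeriahi bound, or Boucksom's $\int_X T_{\rm ac}^n\le\mathrm{vol}(\alpha)$, with $\mathrm{vol}=0$ off the big cone. Moreover, $\langle T^n\rangle$ charges no pluripolar set and equals $T^n$ on $X^\circ$, where $T$ is smooth. Hence $\mathrm{vol}(\alpha)\ge\int_{X^\circ}T^n>0$, so $\alpha$ is big. With this repair your argument is complete, and it does not even use nefness or the Demailly--P\u{a}un criterion for bigness.

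The paper instead stays inside the regularization. It uses that $T_\varepsilon$ converges smoothly to $T$ on a ball $B\Subset X^\circ$, which gives $(\alpha+\varepsilon[\omega_0])^n\ge(c/2)^n\int_B\omega_0^n$. It then lets $\varepsilon\to0$ and applies the nef volume criterion. The paper's route is elementary once one accepts local smooth convergence of the regularization on the smooth locus of $T$. Your repaired route replaces that input with the volume theory of pseudoeffective classes, and it yields bigness independently of the nefness step.
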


\begin{proof}
We first record the zero-Lelong-number estimate. Lelong numbers are monotone for positive currents, so it suffices to check the local model terms in Definition~\ref{def:mixed-growth}. The Poincar\'e term has potential \(-\log(-\log |z|^2)\), whose quotient by \(\log |z|^2\) tends to zero. For the cone term, write \(a=1-\beta\), \(\beta>0\); up to a positive constant,
\[
   \sqrt{-1}\,\partial\bar\partial |x|^{2\beta}
      = \frac{\sqrt{-1}\,dx\wedge d\bar x}{|x|^{2a}}
\]
on the punctured disc, and the potential \(|x|^{2\beta}\) is continuous at the origin. Hence all model terms, and therefore \(T\), have zero Lelong number along \(D^p+D^c\). Since \(T\) is smooth on \(X^\circ\), it has zero Lelong number there as well. Thus \(T\) has zero Lelong number at every point of \(X\), and no component of \(D^p+D^c\) occurs in the Siu divisorial part of \(T\).

Fix a smooth K\"ahler form \(\omega_0\) on \(X\), and choose a smooth representative \(\theta\) of the class \([T]\). Write
\[
   T=\theta+\sqrt{-1}\,\partial\bar\partial\varphi
\]
for a quasi-plurisubharmonic potential \(\varphi\).

By Demailly's regularization theorem with control of Lelong numbers \cite{Demailly1992}, for every \(\varepsilon>0\) there exists a closed current \(T_\varepsilon\in[T]\) with analytic singularities such that
\[
   T_\varepsilon\geq -\varepsilon\omega_0,
\]
and whose Lelong numbers are bounded by those of \(T\). Since all Lelong numbers of \(T\) vanish, the analytic singularities of \(T_\varepsilon\) are removable. Thus \(T_\varepsilon\) is represented by a smooth real \((1,1)\)-form in \([T]\) satisfying the same lower bound. Hence \([T]\) is nef.

It remains to prove bigness. Choose a coordinate ball \(B\Subset X^\circ\). Since \(T|_{X^\circ}\) is K\"ahler, after shrinking \(B\) there is a constant \(c>0\) such that
\[
   T\geq c\omega_0
\]
on \(B\). The regularization may be chosen to converge to \(T\) smoothly on compact subsets of the locus where \(T\) is smooth; hence, for all sufficiently small \(\varepsilon\),
\[
   T_\varepsilon+\varepsilon\omega_0\geq \frac{c}{2}\omega_0
\]
on \(B\), while \(T_\varepsilon+\varepsilon\omega_0\geq0\) on all of \(X\). Therefore
\[
   ([T]+\varepsilon[\omega_0])^n
   =\int_X (T_\varepsilon+\varepsilon\omega_0)^n
   \geq \int_B (T_\varepsilon+\varepsilon\omega_0)^n
   \geq \left(\frac{c}{2}\right)^n\int_B\omega_0^n>0.
\]
Letting \(\varepsilon\to0\) gives \([T]^n>0\). Since \([T]\) is nef and \(X\) is projective, the standard nef volume criterion implies that \([T]\) is big; see, for instance, \cite{DemaillyPaun2004}. The same current \(T\) satisfies Definition~\ref{def:mixed-growth}, so \([T]\) is \(\Delta\)-admissible by Definition~\ref{def:admissible}.
\end{proof}

\begin{definition}\label{def:branched-structure}
We say that the weighted pair \((X,D^p,D^c,\{q_i/p_i\}_{i\in I})\) carries a branched complex-hyperbolic structure of type \(\Delta\) if the following hold.
\begin{enumerate}[label=\textup{(\alph*)}]
\item On the root stack \(\cX=X[\sqrt[p_i]{D_i^c}]_{i\in I}\), obtained by taking the \(p_i\)-th root along every \(D_i^c\), the orbifold universal cover
\[
   \widetilde{\cX^o}\longrightarrow \cX^o
\]
is represented by a simply connected smooth complex manifold.
\item There are a representation \(\rho:\pi_1^{\rm orb}(\cX^o)\to \PU(n,1)\) and a \(\rho\)-equivariant holomorphic period map
\[
   \mathcal{P}:\widetilde{\cX^o}\longrightarrow \mathbb B^n
\]
whose differential is non-degenerate away from the inverse image of \(D^c\). Near a lift of \(D_i^c\), in the root coordinate \(x_i=w_i^{p_i}\) and a normal target coordinate \(\xi_i\), one has
\[
   \xi_i\circ\mathcal{P}=u_i(w)w_i^{m_i},\qquad u_i(0)\neq0,
   \qquad m_i=p_i-q_i.
\]
\item Along \(D^p\), the descended hyperbolic metric has at most Poincar\'e cusp growth.
\end{enumerate}
\end{definition}

\begin{proof}[Proof of Proposition~\ref{prop:converse}]
\noindent\emph{Proof of \textup{(1).}}
The Bergman form is \(\PU(n,1)\)-invariant, so
\(\mathcal{P}^*\omega_{\mathbb B}\) is invariant under the orbifold deck
group and descends to a smooth K\"ahler form on \(X^\circ\). Near \(D_i^c\),
use \(x_i=w_i^{p_i}\) and the local normal form
\(\xi_i=u_i(w)w_i^{m_i}\). In the normal direction,
\[
   d\xi_i=(\text{unit})\,w_i^{m_i-1}dw_i.
\]
Since the Bergman metric is smooth and positive in the target coordinate, the pull-back normal part is mutually bounded by
\[
   |w_i|^{2m_i-2}\sqrt{-1}\,dw_i\wedge d\bar w_i.
\]
Using \(m_i=p_i-q_i\) and \(x_i=w_i^{p_i}\), this is, up to a positive smooth factor,
\[
   |x_i|^{-2q_i/p_i}\sqrt{-1}\,dx_i\wedge d\bar x_i.
\]
The cusp hypothesis gives the Poincar\'e bound along \(D^p\).
This proves \textup{(1)}.

\medskip
\noindent\emph{Proof of \textup{(2).}}
Because the mixed model is locally integrable, \(\omega_{\rm br}\) has locally
finite mass near \(|D|\). The form is closed and positive on \(X^\circ\); by
the Skoda--El Mir extension theorem for closed positive currents
\cite{DemaillyBook} its trivial extension across \(|D|\) is a closed positive
current. We denote the extension again by \(\omega_{\rm br}\), and put
\[
   \Theta:=\frac{n+1}{2\pi}\,\omega_{\rm br}.
\]

It remains to identify the de Rham class of \(\Theta\). Put
\[
   a_i:=\frac{q_i}{p_i},\qquad
   L:=K_X+D^p+\sum_{i\in I} a_i\,D_i^c.
\]
On \(X^\circ\), the complex-hyperbolic metric satisfies
\[
   \Ric(\omega_{\rm br})=-(n+1)\omega_{\rm br}.
\]
Let \(h_K\) be the Hermitian metric on \(K_X|_{X^\circ}\) induced by the volume
form \(\omega_{\rm br}^n\). The standard curvature formula gives
\[
   c_1(K_X,h_K)
   =
   -\frac{1}{2\pi}\Ric(\omega_{\rm br})
   =
   \Theta
\]
on \(X^\circ\).

Choose an integer \(N>0\) such that \(Na_i\in\mathbb Z\) for all \(i\), and set
\[
   L_N:=N L.
\]
Then \(L_N\) is a holomorphic line bundle. It suffices to prove
\([N\Theta]=c_1(L_N)\). Write
\[
   D^p=\sum_{\ell\in J_p}D_\ell^p,\qquad D^c=\sum_{i\in I}D_i^c.
\]
Let \(s_\ell^p\) and \(s_i^c\) be the canonical sections of
\(\mathcal O_X(D_\ell^p)\) and \(\mathcal O_X(D_i^c)\), respectively, and
choose arbitrary smooth Hermitian metrics \(h_\ell^p\) and \(h_i^c\) on these
divisor line bundles. On \(X^\circ\) define a singular Hermitian metric on
\(L_N\) by
\[
   H_N
   :=
   h_K^{\otimes N}
   \otimes
   \bigotimes_{\ell\in J_p}
   \left(\frac{h_\ell^p}{|s_\ell^p|_{h_\ell^p}^2}\right)^{\otimes N}
   \otimes
   \bigotimes_{i\in I}
   \left(\frac{h_i^c}{|s_i^c|_{h_i^c}^2}\right)^{\otimes Na_i}.
\]
On \(X^\circ\) these
divisor factors have zero curvature, since their canonical sections have
constant norm \(1\). Hence
\[
   c_1(L_N,H_N)=N\,c_1(K_X,h_K)=N\Theta
\]
on \(X^\circ\).

Now we check the singularities of \(H_N\). Choose local coordinates
\[
   D^p=\{z_1\cdots z_r=0\},\qquad
   D^c=\{x_1\cdots x_s=0\}.
\]
In local frames with \(s_\alpha^p=z_\alpha e_\alpha^p\) and
\(s_\beta^c=x_\beta e_\beta^c\), the divisor factors contribute
\(|z_\alpha|^{-2N}\) and \(|x_\beta|^{-2Na_\beta}\). The mixed
Poincar\'e--cone model gives the divisorial part of the determinant metric in
the form
\[
   |\sigma_K|_{h_K}^2
   =
   \prod_{\alpha=1}^r |z_\alpha|^2
   \prod_{\beta=1}^s |x_\beta|^{2a_\beta}
   \cdot \Lambda,
\]
where \(\sigma_K\) is a local frame of \(K_X\), and \(\log\Lambda\) has only
log-log growth in the cusp variables and bounded, equivalently
orbifold-bounded, growth in the compact root directions. Therefore, in the
corresponding local frame of \(L_N\),
\[
   |\sigma_{L_N}|_{H_N}^2=\Lambda^N.
\]
Thus \(H_N\) has locally integrable weights, with no remaining
\(\log|z_\alpha|^2\) or \(\log|x_\beta|^2\) divisorial term. Its curvature
current is well-defined and has no extra Siu divisorial mass along
\(D^p+D^c\). The trivial extension of \(\Theta\) also has no divisorial mass
by the mixed growth estimate and the zero-Lelong-number estimate in Lemma~\ref{lem:mixed-current-big-nef}. Since
the two currents agree on \(X^\circ\), they agree on all of \(X\):
\[
   c_1(L_N,H_N)=N\Theta.
\]

Finally, let \(H_0\) be any smooth Hermitian metric on \(L_N\). Since
\(H_N=H_0e^{-\varphi}\) for a global \(L^1_{\rm loc}\) function \(\varphi\),
\[
   c_1(L_N,H_N)
   =
   c_1(L_N,H_0)
   +
   \frac{\sqrt{-1}}{2\pi}\partial\bar\partial\varphi.
\]
The second term is exact as a current. Hence
\[
   [N\Theta]=[c_1(L_N,H_N)]=c_1(L_N),
\]
and division by \(N\) gives
\[
   [\Theta]=c_1(K_X+\Delta)
   =c_1\left(K_X+D^p+\sum_{i\in I}\frac{q_i}{p_i}D_i^c\right).
\]

We have also proved that \(\Theta\) has at most mixed Poincar\'e--cone growth
and is a smooth K\"ahler form on \(X^\circ\). Lemma~\ref{lem:mixed-current-big-nef}
then shows that \(c_1(K_X+\Delta)\) is big and nef, and that it is
\(\Delta\)-admissible.
This proves \textup{(2)}.

\medskip
\noindent\emph{Proof of \textup{(3).}}
Equip \(E|_{X^\circ}=\Omega_X^1|_{X^\circ}\oplus\mathcal O_X\) with the Hodge metric
\[
   h=\omega_{\rm br}^{-1}\oplus h_0,
\]
where \(h_0\) is the constant metric on \(\mathcal O_X\). This is the pull-back of the standard homogeneous Hodge metric on the system of Hodge bundles over \(\mathbb B^n\). Hence the projective Hitchin--Simpson connection is flat. Equivalently, if
\[
   F_{h,\theta}:=F_h+[\theta,\theta_h^\dagger]
\]
denotes the Hitchin--Simpson curvature of \((E,\theta,h)\), then
\begin{equation}\label{eq:hs-flat-conv}
   F_{h,\theta}^{\perp}=0.
\end{equation}
Here \(\perp\) denotes the trace-free part of the Hitchin--Simpson curvature. The local model gives
\[
   |dx_i|_h^2\sim |x_i|^{2q_i/p_i}
\]
in the conormal direction to \(D_i^c\), exactly the growth prescribed by the parabolic weight \(q_i/p_i\). Along \(D^p\) the metric is the usual tame nilpotent cusp model. Thus \(h\) is adapted to \(E_*\) and acceptable with respect to the mixed Poincar\'e--cone model.

Let \(\alpha\) be a \(\Delta\)-admissible big and nef class, and choose a current \(T\in\alpha\) as in Definition~\ref{def:admissible}. Put \(\omega_T:=T|_{X^\circ}\), which is a smooth K\"ahler form on \(X^\circ\). Let \(S_*\subset E_*\) be a saturated Higgs subsheaf of rank \(s\), and put \(r=\rk E=n+1\). Let
\[
   Z\subset X
\]
be the union of \(|D|\) and the locus where \(E/S\) is not locally free. Then \(Z\setminus |D|\) has codimension at least two, and on
\[
   U:=X\setminus Z
\]
the sheaf \(S\) is a holomorphic subbundle of \(E\) preserved by \(\theta\). Denote by \(\Pi\) the \(h\)-orthogonal projection from \(E|_U\) to \(S|_U\), and by \(h_S\) the induced metric on \(S|_U\).

We use the following Chern--Weil formula for parabolic degrees.

Let
\[
   b_j(S):=\sum_a a\,\rk\bigl(\Gr^j_a(S_*)\bigr),
\]
where \(\Gr^j_a(S_*)\) is the graded piece of the parabolic filtration of \(S_*\)
along \(D_j^c\). Thus
\[
   \parc_1(S_*)=c_1(S)+\sum_j b_j(S)[D_j^c].
\]
Choose a finite Galois cover \(g:Y\to X\) adapted to the denominators of the
weights, so that \(g^*D_j^c=m_j\,p_j\widetilde D_j^c\). By \cite{Biswas1997}, the pull-back of every
locally abelian parabolic bundle is an equivariant orbifold bundle. Let
\(\widetilde S\) be the equivariant bundle corresponding to \(S_*\), and let
\(\widetilde h_S\) be the metric induced by \(h_S\) in the root frames. Then
\[
   c_1(\widetilde S)=g^*\parc_1(S_*),
\]
and, on \(Y\setminus g^{-1}(D^p+D^c)\),
\[
   \tr F_{\widetilde h_S}=g^*\tr F_{h_S}.
\]
Locally this is the change from the coarse adapted frame to the root
frame. If \(x_j=w_j^{p_j}\) and the weight is \(a=q_j/p_j\), the factor
\(|x_j|^{2a}\) in the coarse adapted metric becomes a smooth factor in the
orbifold frame \(w_j^{-q_j}dx_j\). Hence the divisor contribution in
\(\parc_1(S_*)\) is exactly converted into the ordinary first Chern class of
\(\widetilde S\) on the cover.

Put \(\widetilde T:=g^*T\). Since \(T\) is dominated by the mixed
Poincar\'e--cone model, \(\widetilde T\) has at most Poincar\'e growth along
\(g^{-1}D^p\) and locally finite mass along the inverse
image of \(D^c\). A cut-off argument gives the required intersection formula.

\begin{lemma}\label{lem:par-cw-admissible}
With the notation above, put \(\widetilde Z:=g^{-1}(Z)\). Then
\[
   c_1(\widetilde S)\cdot [\widetilde T]^{n-1}
   =
   \frac{\sqrt{-1}}{2\pi}
   \int_{Y\setminus \widetilde Z}
   \tr F_{\widetilde h_S}\wedge \widetilde T^{n-1}.
\]
The same formula holds for \(\widetilde E\) with the pulled-back metric
\(\widetilde h\), where \(\widetilde E\) is the equivariant bundle associated
with \(E_*\).
\end{lemma}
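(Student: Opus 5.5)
We prove Lemma~\ref{lem:par-cw-admissible} by a cut-off argument on the cover \(Y\). The singular set \(\widetilde Z\) splits into three pieces: the cusp part \(g^{-1}(D^p)\), the compact root part \(g^{-1}(D^c)\), and the codimension-two part \(g^{-1}(Z\setminus|D|)\).

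\textbf{Step 1: regularity of the metric on \(\widetilde S\).} In the root frames, \(\widetilde h_S\) is smooth and nondegenerate across \(g^{-1}(D^c)\) away from \(\widetilde Z\). This holds because the factor \(|x_j|^{2a}\) is absorbed into the frame \(w_j^{-q_j}dx_j\), exactly as in the regularity part of Proposition~\ref{prop:root-stack-KH}. So \(\tr F_{\widetilde h_S}\) is an honest smooth form near generic points of the compact root divisors.

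\textbf{Step 2: comparison with a smooth metric.} Fix a smooth Hermitian metric \(k\) on \(\det\widetilde S\) over \(Y\), and write \(\det\widetilde h_S=k\,e^{-\psi}\) on \(Y\setminus\widetilde Z\). Then
\[
\tr F_{\widetilde h_S}=c_1(\det\widetilde S,k)\cdot 2\pi/\sqrt{-1}+\partial\bar\partial\psi .
\]
The smooth term integrated against \(\widetilde T^{n-1}\) gives \(c_1(\widetilde S)\cdot[\widetilde T]^{n-1}\). Here we use that \(\widetilde T\) has no mass on \(\widetilde Z\): it is dominated by the mixed model, which is locally integrable, and we apply Skoda--El Mir as in part (2) of Proposition~\ref{prop:converse}. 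It therefore remains to show
\[
\int_{Y\setminus\widetilde Z}\partial\bar\partial\psi\wedge\widetilde T^{n-1}=0 .
\]

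\textbf{Step 3: cut-off argument.} Choose a family \(\chi_\varepsilon\) with \(\chi_\varepsilon\to 1\) on \(Y\setminus\widetilde Z\), built as follows:
\begin{itemize}[label=--]
\item near \(g^{-1}D^p\), use log-log cut-offs \(\chi(\varepsilon\log(-\log|z|^2))\);
\item near \(g^{-1}D^c\) and the codimension-two set, use standard logarithmic cut-offs as in the proof of Proposition~\ref{prop:root-stack-KH}.
\end{itemize}
Integrate by parts twice, so the error is \(\int\psi\,\partial\bar\partial\chi_\varepsilon\wedge\widetilde T^{n-1}\), or once, giving \(\int\partial\psi\wedge\bar\partial\chi_\varepsilon\wedge\widetilde T^{n-1}\). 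Both must be shown to tend to \(0\).

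\textbf{Step 4: the needed estimates.} Near the cusp, \(\psi\) grows at most like \(C\log(-\log|z|^2)\). This follows from acceptability of the tame nilpotent model metric of Lemma~\ref{lem:model-metric}: subbundle metrics of acceptable bundles have at most log-log growth by Simpson's and Mochizuki's norm estimates. The log-log cut-off has \(|\partial\chi_\varepsilon|_{\omega_{\rm mix}}\le C\varepsilon\) and \(\widetilde T\le C\omega_{\rm mix}\), and the Poincaré model has finite volume (as in Lemma~\ref{lem:cusp-complete-volume}). Together these make the error \(O(\varepsilon)\times\) finite terms, hence \(o(1)\).

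Near the codimension-two set, \(\psi\) is bounded above. Saturation and the Chern--Weil (Gauss--Codazzi) formula give \(\tr F_{h_S}=\tr(\Pi F_h)-|\bar\partial\Pi|^2\)-type terms, so the second fundamental form \(\bar\partial\Pi\) lies in \(L^2\). The standard Simpson/Uhlenbeck--Yau argument then shows the boundary term vanishes, because capacity zero of codimension-two sets lets \(\int|d\chi_\varepsilon|^2\to0\).

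\textbf{Step 5: the bundle \(\widetilde E\).} For \(\widetilde E\), \(\widetilde h\) is the Hodge metric itself, with no codimension-two singularity, so only the cusp estimate is needed.

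\textbf{Main obstacle.} The hard step is the cusp estimate for \(\psi\) on the saturated subsheaf. The plan is to bound \(|\bar\partial\Pi|^2\) in \(L^1(\widetilde T^{n-1}\wedge\cdot)\) using that \(\sqrt{-1}\Lambda_{\omega}F_{h,\theta}\) is bounded for the Hodge metric by acceptability. This makes the integral of \(\tr F_{\widetilde h_S}\) absolutely convergent, after which the log-log cut-offs kill the boundary terms.
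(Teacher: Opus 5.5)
Your outline is sound and has the same skeleton as the paper's proof:
\begin{itemize}
\item compare \(\det\widetilde h_S\) with a smooth reference metric;
\item write the difference of curvatures as \(dd^c\) of a potential on \(Y\setminus\widetilde Z\);
\item note that the compact weights are absorbed on the cover, so \(g^{-1}(D^c)\) is harmless;
\item remove the boundary terms with cut-offs near the cusp and near the codimension-two set (for \(\widetilde E\) only the cusp matters).
\end{itemize}
The real difference is in what controls the boundary terms. The paper integrates by parts once and uses a first-order estimate \(\partial\log\det H_S=O\bigl(\sum_\alpha dt_\alpha/(t_\alpha s_\alpha)+dr_A/r_A+\cdots\bigr)\). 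From this it also reads off the finite mass of \(\tr F_{\widetilde h_S}\wedge\widetilde T^{n-1}\). You use only the zeroth-order bound \(|\psi|\le C\log s\), obtained by taking determinants and logarithms in the two-sided bound \(C^{-1}\prod_\alpha s_\alpha^{-N}\le H_S\le C\prod_\alpha s_\alpha^{N}\). You then get convergence of the integral from the sign in the Gauss--Codazzi formula. This needs less than a gradient estimate and gives a more robust convergence argument. As written, however, several steps are mismatched or out of order.

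\emph{The cusp error.} With only a \(C^0\) bound on \(\psi\) you must use the twice-integrated error \(\int\psi\,dd^c\chi_\varepsilon\wedge\widetilde T^{n-1}\). The bound \(|\partial\chi_\varepsilon|\le C\varepsilon\) you quote is relevant only to the once-integrated error, which needs a bound on \(\partial\psi\) you have not established. For \(\chi_\varepsilon=\chi(\varepsilon\log s)\) one has \(|dd^c\chi_\varepsilon|\le C\varepsilon\,\omega_P\), because \(s\) is pluriharmonic. On the support, \(|\psi|\le 2C/\varepsilon\). So the error is \(O(1)\) times the \(\Omega\)-volume of the shell \(\{e^{1/\varepsilon}\le s\le e^{2/\varepsilon}\}\). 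This tends to zero, but it is not \(O(\varepsilon)\).

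\emph{Order of the convergence argument.} Absolute convergence is a consequence, not a prerequisite. The boundary terms vanish using only the bounds on \(\psi\). Then \(\tr F_{h_S}=\tr(\Pi F_h)-(\text{a second fundamental form term of fixed sign})\), together with Fatou's lemma, gives \(|\bar\partial\Pi|^2\in L^1\) and allows dominated convergence. The input needed is the pointwise bound \(|F_{\widetilde h}|\le C\,g^*\omega_{\rm br}\le C'\,\Omega\). A bound on \(\Lambda_\omega F_{h,\theta}\) alone does not suffice, since \(\widetilde T\) need not be the Hodge metric.

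\emph{The codimension-two set.} Near this set the boundary term contains \(d^c\psi\), not \(\bar\partial\Pi\), so \(L^2\)-integrability of the second fundamental form does not control it. Instead write \(\psi=\log|\sigma|^2+(\text{smooth})\), where \(\sigma\) is the holomorphic section of \(\wedge^s\widetilde E\otimes(\det\widetilde S)^{-1}\) given by the inclusion. By saturation, \(\sigma\) vanishes only in codimension \(\ge 2\). Hence \(\psi\) is quasi-plurisubharmonic, \(dd^c\psi\) puts no mass on that set, and \(d\psi\in L^2_{\rm loc}\); after this your capacity cut-off works.

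\emph{The class of \(\widetilde T^{n-1}\).} Skoda--El Mir gives only closedness of the trivial extension of \(\widetilde T^{n-1}\). Identifying its class with \([\widetilde T]^{n-1}\) uses the \(\log(-\log|t|^2)\) growth of the local potentials, which is the point the paper makes via domination by \(\Omega\).
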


\begin{proof}
Put
\[
   B_Y:=g^{-1}(D^p+D^c),\qquad
   A:=\widetilde Z\setminus B_Y,\qquad
   V:=Y\setminus\widetilde Z.
\]
The set \(A\) has codimension at least two. On \(V\) all bundles and metrics
are smooth, so the issue is to identify the improper Chern--Weil integral with
the intersection number.

Near a compact component of \(D^c\), write the local adapted cover in the
normal direction as
\[
   x=w^\ell,
\]
where \(\ell\) is divisible by the denominator \(p\) of the weight \(a=q/p\).
The cone part of the mixed model gives
\[
   0\le T\le C\frac{\sqrt{-1}\,dx\wedge d\bar x}{|x|^{2a}}+\cdots.
\]
Therefore
\[
   0\le \widetilde T
      \le C |w|^{2\ell(1-a)-2}
         \sqrt{-1}\,dw\wedge d\bar w+\cdots
      =
      C |w|^{2(\ell/p)(p-q)-2}
        \sqrt{-1}\,dw\wedge d\bar w+\cdots.
\]
Since \(\ell/p\ge1\) and \(p-q\ge1\), the exponent is non-negative.

Near a cusp component of \(D^p\), a local normal cover has the form
\[
   z=t^\ell.
\]
The Poincar\'e factor is stable under such a finite cover:
\[
   g^*\left(
   \frac{\sqrt{-1}\,dz\wedge d\bar z}
        {|z|^2(\log |z|^2)^2}
   \right)
   =
   \frac{\sqrt{-1}\,dt\wedge d\bar t}
        {|t|^2(\log |t|^2)^2}.
\]
Thus, in local coordinates \((t_\alpha,w_\beta,y_\gamma)\) on \(Y\), with
\(t_\alpha=0\) over \(D^p\) and \(w_\beta=0\) over \(D^c\), there is a model
form
\[
   \Omega:=
   \sum_\alpha
   \frac{\sqrt{-1}\,dt_\alpha\wedge d\bar t_\alpha}
        {|t_\alpha|^2(\log |t_\alpha|^2)^2}
   +
   \sum_\beta \sqrt{-1}\,dw_\beta\wedge d\bar w_\beta
   +
   \sum_\gamma \sqrt{-1}\,dy_\gamma\wedge d\bar y_\gamma
\]
such that \(0\le \widetilde T\le C\Omega\). In particular, on
\(Y\setminus B_Y\) the smooth forms \(\widetilde T^k\), \(1\le k\le n\), have
locally finite mass near \(B_Y\), and their trivial extensions give no mass to
\(B_Y\cup A\). The domination by \(\Omega\) also implies that pairing these
extensions with smooth closed forms computes the cohomological intersections
with \([\widetilde T]^k\).

The induced metric \(\widetilde h_S\) has acceptable growth. In an adapted
holomorphic frame for \(\widetilde E\), the compact weights have
been absorbed by the cover, while the cusp directions have only logarithmic
tame growth. Hence, if \(H_S\) is the matrix of \(\widetilde h_S\) on a local
frame of \(\widetilde S\), then for some \(C,N>0\)
\[
   C^{-1}\prod_\alpha s_\alpha^{-N}I
   \le H_S\le
   C\prod_\alpha s_\alpha^N I,
   \qquad s_\alpha=-\log |t_\alpha|^2,
\]
away from \(A\). Since \(S\) is saturated, the possible degeneration of
\(\det H_S\) away from the boundary is supported on \(A\). If \(r_A\) denotes
a local distance to \(A\), then
\[
   \partial\log\det H_S
   =
   O\left(
      \sum_\alpha \frac{dt_\alpha}{t_\alpha s_\alpha}
      +\frac{dr_A}{r_A}
      +\sum_\beta dw_\beta+\sum_\gamma dy_\gamma
   \right).
\]
Consequently
\[
   \tr F_{\widetilde h_S}\wedge\widetilde T^{n-1}
\]
has finite mass on \(V\).

Let \(k_S\) be a smooth Hermitian metric on \(\widetilde S\), and put
\[
   \gamma_S:=\frac{\sqrt{-1}}{2\pi}\tr F_{\widetilde h_S},
   \qquad
   \gamma_0:=\frac{\sqrt{-1}}{2\pi}\tr F_{k_S}.
\]
On \(V\),
\[
   \gamma_S-\gamma_0
   =
   dd^c\varphi,
   \qquad
   \varphi:=\log\frac{\det\widetilde h_S}{\det k_S},
   \qquad
   dd^c=\frac{\sqrt{-1}}{2\pi}\partial\bar\partial.
\]
The estimates above show that \(\varphi\in L^1_{\rm loc}\), has no divisorial
logarithmic term along \(B_Y\), and has only logarithmic singularities along
the codimension at least two set \(A\). Thus the trivial extension of
\(\gamma_S\) is a closed \(L^1\)-current representing
\[
   [\gamma_S]=[\gamma_0]=c_1(\widetilde S).
\]

Choose cut-off functions \(\chi_\varepsilon\) which are equal to zero near
\(B_Y\cup A\), equal to one away from a slightly larger neighbourhood, and
satisfy the usual logarithmic estimates. In a cusp variable one may arrange
\[
   |\partial\chi_\varepsilon|
   \le
   \frac{C}{|t|\,|\log |t||}
\]
on its support. In a compact root variable the ordinary logarithmic cut-off is
enough because \(\widetilde T\) is locally bounded there and
\(\partial\varphi\) has no \(dw/w\)-type divisorial pole. Near \(A\), the
standard radial logarithmic cut-off and the codimension at least two estimate
for \(dr_A/r_A\) give the same conclusion. Therefore
\[
   \lim_{\varepsilon\to0}
   \int_Y
   d\chi_\varepsilon\wedge d^c\varphi\wedge\widetilde T^{n-1}=0.
\]
Since all forms are smooth on \(\operatorname{supp}\chi_\varepsilon\subset V\)
and \(d\widetilde T=0\) there, Stokes' theorem gives
\[
   \int_Y
   \chi_\varepsilon\,dd^c\varphi\wedge\widetilde T^{n-1}
   =
   -\int_Y
   d\chi_\varepsilon\wedge d^c\varphi\wedge\widetilde T^{n-1}.
\]
Letting \(\varepsilon\to0\), the exact Bott--Chern term pairs trivially with
\(\widetilde T^{n-1}\). Therefore
\[
\begin{aligned}
   \int_V\gamma_S\wedge\widetilde T^{n-1}
   &=
   \int_Y\gamma_0\wedge\widetilde T^{n-1}  \\
   &=
   c_1(\widetilde S)\cdot[\widetilde T]^{n-1}.
\end{aligned}
\]
The argument for \(\widetilde E\) is identical, without a codimension-two
non-locally-free locus.
\end{proof}

Dividing by the degree of the cover and using the change-of-variables formula
for the finite map \(g\), one obtains
\begin{equation}\label{eq:par-degree-cw-current}
   \pardeg_\alpha(S_*)
   =
   \frac{\sqrt{-1}}{2\pi}
   \int_U \tr F_{h_S}\wedge T^{n-1},
   \qquad
   \pardeg_\alpha(E_*)
   =
   \frac{\sqrt{-1}}{2\pi}
   \int_U \tr F_h\wedge T^{n-1}.
\end{equation}

Because \(S\) is \(\theta\)-invariant, we have
\[
   \tr[\theta_S,\theta_{S,h_S}^\dagger]=0.
\]

Thus, in the trace appearing in \eqref{eq:par-degree-cw-current}, \(F_{h_S}\) may
equivalently be replaced by the Hitchin--Simpson curvature
\[
   F_{h_S,\theta_S}=F_{h_S}+[\theta_S,\theta_{S,h_S}^\dagger].
\]

On \(U\), the standard second fundamental form identity for Higgs bundles gives
\begin{equation}\label{eq:second-fund-current}
\begin{aligned}
   \frac{\sqrt{-1}}{2\pi}\tr F_{h_S,\theta_S}
   &-\frac{s}{r}\frac{\sqrt{-1}}{2\pi}\tr F_{h,\theta}  \\
   &=\frac{\sqrt{-1}}{2\pi}\tr\bigl(\Pi F_{h,\theta}^{\perp}\bigr)
     -\mathcal Q(\Pi).
\end{aligned}
\end{equation}
Here \(\mathcal Q(\Pi)\) is the non-negative \((1,1)\)-form whose contraction with
\(\omega_T:=T|_{X^\circ}\) is
\[
   \Lambda_{\omega_T}\mathcal Q(\Pi)
   = c_n\Bigl(|\bar\partial_E\Pi|^2_{h,\omega_T}+|[\theta,\Pi]|^2_{h,\omega_T}\Bigr)
\]
for a positive dimensional constant \(c_n\). This formula is the usual Chern--Weil formula
for the induced Higgs metric: the first norm is the ordinary second fundamental form, and
the second norm measures the failure of the orthogonal splitting to be Higgs-invariant.

After integrating \eqref{eq:second-fund-current} against \(T^{n-1}\) on \(U\),
the parabolic Chern--Weil formula
\eqref{eq:par-degree-cw-current} identifies the left-hand side with
\(\pardeg_\alpha(S_*)-\frac{s}{r}\pardeg_\alpha(E_*)\). Lemma~\ref{lem:par-cw-admissible} accounts for the boundary and codimension-two contributions. Using \eqref{eq:hs-flat-conv}, we obtain
\begin{equation}\label{eq:slope-current-identity}
\begin{aligned}
   \pardeg_\alpha(S_*)-\frac{s}{r}\pardeg_\alpha(E_*)
   &=-C_n\int_U
      \Bigl(|\bar\partial_E\Pi|^2_{h,\omega_T}+|[\theta,\Pi]|^2_{h,\omega_T}\Bigr)
      \frac{\omega_T^n}{n!}  \\
   &\le 0,
\end{aligned}
\end{equation}
where \(C_n>0\). Dividing by the ranks gives
\[
   \mu_\alpha(S_*)\le \mu_\alpha(E_*).
\]
Thus \((E_*,\theta)\) is \(\mu_\alpha\)-semistable.

If equality holds in \eqref{eq:slope-current-identity}, both non-negative terms in the integral vanish. Since \(\omega_T\) is a K\"ahler metric on \(X^\circ\), one obtains
\[
   \bar\partial_E\Pi=0,
   \qquad
   [\theta,\Pi]=0
\]
on \(U\). Therefore \(\Pi\) is a holomorphic Higgs projection on \(U\). Since \(S\) and \(E/S\) are saturated, the two summands extend uniquely across the codimension at least two set by reflexivity. The local growth of \(h\) along \(D^p+D^c\) shows that the extended summands are compatible with the induced parabolic filtrations; equivalently, the splitting is a splitting in the category of parabolic Higgs sheaves. Hence any saturated Higgs subsheaf with the same \(\alpha\)-slope is a direct summand.
This proves \textup{(3)}.

\medskip
\noindent\emph{Proof of \textup{(4).}}
The parabolic Chern classes are computed by the same adapted Chern--Weil interpretation as in Lemma~\ref{lem:par-cw-admissible}, applied on an adapted cover and then divided by the degree of the cover. On \(X^\circ\), equation~\eqref{eq:hs-flat-conv} says that the Hitchin--Simpson curvature of \((E,\theta,h)\) is scalar. Therefore the induced Hitchin--Simpson curvature on \(\End_0(E)\), and hence also on \(\End(E)\), is zero. The adapted-cover Chern--Weil representative of \(\parch_2(\End E_*)\) is consequently zero on the complement of the boundary; the growth estimates used above show that no boundary current is produced. Thus
\[
   \parch_2(\End E_*)=0.
\]
For a rank \(r=n+1\) bundle,
\[
   \parch_2(\End E_*)=2r\,\parch_2(E_*)-\parch_1(E_*)^2,
\]
which gives the asserted equality.
\end{proof}

\begin{proof}[Proof of Theorem~\ref{thm:standard-orbifold-converse}]
The orbifold ball quotient structure gives the orbifold universal cover
\[
   \bB^n\longrightarrow \cX^o
\]
and a period map which is the identity map on \(\bB^n\), up to the action of \(\Gamma\). Near a compact orbifold divisor of stabilizer order \(p_i\), the standard orbifold local model has root coordinate \(z_i=w_i^{p_i}\), and the normal ball coordinate is a unit multiple of \(w_i\). Since \(q_i=p_i-1\), this is exactly the local normal form
\[
   \xi_i=u_i(w)w_i^{p_i-q_i}=u_i(w)w_i,
   \qquad u_i(0)\ne0.
\]
At the cusp boundary \(D^p\), the toroidal compactification gives the usual Poincar\'e cusp growth. Hence the weighted pair \((X,D^p,D^c,\{(p_i-1)/p_i\})\) satisfies Definition~\ref{def:branched-structure} with
\[
   \Delta=D^p+\sum_{i\in I}\left(1-\frac1{p_i}\right)D_i^c.
\]
Proposition~\ref{prop:converse} therefore gives that \(K_X+\Delta\) is big and nef, that \(c_1(K_X+\Delta)\) is \(\Delta\)-admissible, and that the asserted polystability and Chern equality hold for every \(\Delta\)-admissible big and nef class.
\end{proof}

\section{Functoriality and the Categorical Equivalence}\label{sec:categorical}

In this section we prove the functoriality needed for the categorical equivalence stated in Theorem~\ref{thm:categorical}. We first record the local consequences of the definition of morphisms in \(\BG_n^{\rm std}\).

\begin{proposition}\label{prop:BG-morphism-local-form}
Let
\[
   f:\cX_1^{\log}\longrightarrow \cX_2^{\log}
\]
be a morphism in \(\BG_n^{\rm std}\). Then the following hold.
\begin{enumerate}[label=\textup{(\roman*)}]
\item The induced map on the open orbifold parts
\[
   f^o:\cX_1^o\longrightarrow \cX_2^o
\]
is finite \'etale. If the open orbifolds are connected and \(k=\deg(f^o)\), then \(f_*\pi_1^{\rm orb}(\cX_1^o)\) is a subgroup of \(\pi_1^{\rm orb}(\cX_2^o)\) of index \(k\).

\item At the generic point of a component of \(D_1^p\) mapping to a component of \(D_2^p\), one can choose local cusp coordinates \(q_1,q_2\) such that
\[
   f^*q_2=u q_1^m,\qquad u(0)\ne0,
\]
where \(m\ge1\). Thus the ordinary map is ramified to order \(m\) in the normal cusp direction, while the logarithmic differential satisfies
\[
   f^*d\log q_2=m\,d\log q_1+d\log u.
\]

\item At the generic point of a compact root divisor \(D_{1,j}^c\) mapping to \(D_{2,i}^c\), let the root orders be \(p_{1,j}\) and \(p_{2,i}\). Then \(p_{1,j}\mid p_{2,i}\), the stabilizer map
\[
   \mu_{p_{1,j}}\hookrightarrow\mu_{p_{2,i}}
\]
is injective, and in root coordinates the morphism is unramified in the normal direction. Equivalently, after choosing normal root coordinates \(w_1,w_2\),
\[
   f^*w_2=u w_1,\qquad u(0)\ne0.
\]
On coarse coordinates \(z_{1,j}=w_1^{p_{1,j}}\) and \(z_{2,i}=w_2^{p_{2,i}}\), this becomes
\[
   f^*z_{2,i}=u' z_{1,j}^{p_{2,i}/p_{1,j}},
   \qquad u'(0)\ne0.
\]

\item Near a point lying over \(D^p\cap D^c\), the cusp logarithmic directions and the compact root directions have product log-root form after choosing suitable coordinates; in particular no cusp factor occurs in a compact root coordinate and no compact root factor occurs in a cusp coordinate.
\end{enumerate}
\end{proposition}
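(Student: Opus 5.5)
The proof is local. It uses only the two defining properties of a morphism in \(\BG_n^{\rm std}\): representability together with \((f^{-1}|D_2^p|)_{\rm red}=|D_1^p|\), and the requirement that \(df_{\log}\) be an isomorphism. For standard weights, Proposition~\ref{prop:E10-vs-cotangent} gives \(\cE^{1,0}=\Omega^1_{\cX}(\log D^p)\), so the logarithmic cotangent bundle of each root stack is an honest locally free sheaf on the stack. We may therefore compute \(df_{\log}\) in root charts, using the frames of Section~\ref{sec:local-root}.

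\emph{Part (i).} Since \(f^{-1}(|D_2^p|)_{\rm red}=|D_1^p|\), the morphism restricts to a finite surjective representable morphism \(f^o:\cX_1^o\to\cX_2^o\). Away from the cusp boundary, \(\Omega^1(\log D^p)\) coincides with the ordinary cotangent sheaf \(\Omega^1_{\cX}\) of the smooth DM stack. Hence \(df^o\) is an isomorphism on the étale root charts, so \(f^o\) is étale. A finite étale representable morphism of connected orbifolds is an orbifold covering. The covering theory of orbifold fundamental groups then identifies \(f_*\pi_1^{\rm orb}(\cX_1^o)\) with a subgroup of index \(\deg f^o\). Injectivity of \(f_*\) comes from the path-lifting property of orbifold covers.

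\emph{Part (ii).} We work at a generic point of a component of \(D_1^p\), where no other boundary component passes. Write \(f^*q_2=uq_1^m\), with \(m\ge1\) because the preimage of the cusp is the cusp. Then \(f^*d\log q_2=m\,d\log q_1+d\log u\). The remaining coordinates pull back holomorphically, so \(df_{\log}\) in logarithmic frames is automatically holomorphic. The normal entry is \(m\ne0\), and invertibility of \(df_{\log}\) forces the transverse block to be invertible as well.

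\emph{Part (iii).} Work in root charts. Representability means the stabilizer homomorphism \(\mu_{p_{1,j}}\to\mu_{p_{2,i}}\) is injective, so \(p_{1,j}\mid p_{2,i}\). Write \(f^*w_2=uw_1^k\). Equivariance of the chart map under the stabilizer is compatible with this for every \(k\). However, \(df\) is an isomorphism in the ordinary root frames \(dw\), because these are the frames of \(\Omega^1_{\cX}(\log D^p)\) near \(D^c\). This forces \(k=1\), and the coarse formula follows by raising to the power \(p_{2,i}\).

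\emph{Part (iv).} This is the main obstacle: it requires controlling mixed points. Near a point of \(D_1^p\cap D_1^c\), write \(f^*q_2=u\prod_\alpha t_\alpha^{m_\alpha}\prod_\beta w_\beta^{e_\beta}\). The compact factors \(w_\beta\) must have \(e_\beta=0\): otherwise \(f^{-1}(D_2^p)\) would contain the compact divisor \(w_\beta=0\), contradicting \((f^{-1}|D_2^p|)_{\rm red}=|D_1^p|\). Symmetrically, suppose a compact root coordinate \(f^*w_2\) contained a cusp factor \(t_\alpha\). Then \(f(D_1^p)\) would meet \(D_2^c\) along a divisor, which is excluded by the same preimage condition and by \(f\) mapping \(\cX_1^o\) to \(\cX_2^o\). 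Finally, the logarithmic Jacobian is block-triangular with invertible diagonal blocks by parts (ii) and (iii). Using this, I would apply the holomorphic inverse function theorem separately to the cusp and root blocks to modify coordinates into product form. The delicate point I expect is checking that stabilizer equivariance permits this coordinate change simultaneously for all \(\mu_{p}\) factors. I would handle it by averaging the new coordinates over the finite stabilizer, as in Lemma~\ref{lem:crossing-normal-form}.
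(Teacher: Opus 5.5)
Your proposal is correct and follows the paper's proof essentially step for step. Part (i) gets étaleness of $f^o$ from invertibility of the ordinary differential away from $D^p$. The support condition $\bigl(f^{-1}(|D_2^p|)\bigr)_{\rm red}=|D_1^p|$ gives (ii) and rules out compact factors in $f^*q_2$. Invertibility of $df_{\log}$ on the ordinary root frames $dw$ forces $f^*w_2=uw_1$ in (iii).

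The one deviation is in (iv). You exclude a cusp factor in $f^*w_2$ by finiteness together with $f(D_1^p)\subset D_2^p$: a divisor cannot map into the codimension-two set $D_2^p\cap D_2^c$. The paper instead notes that such a factor would make the $dw_2$-row of $df_{\log}$ vanish along the cusp. Both arguments work.

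Your averaging step is not actually needed. The coordinates $t\,u^{1/m}$, $f^*w_2$, $f^*y_2$ already give equivariant product coordinates on the source, because $u$ is an invariant unit.
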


\begin{proof}
On \(\cX^o\) the logarithmic boundary \(D^p\) has been removed, so \(df_{\log}\) is the ordinary differential on root-stack charts. Since this differential is an isomorphism by Definition~\ref{def:BGcategory}, the finite representable morphism \(f^o\) is unramified on orbifold charts, hence finite \'etale. The index statement is the standard covering-space statement for a connected finite \'etale orbifold covering of degree \(k\).

For the cusp assertion, let \(q_2=0\) be a local equation of the target cusp component. The support condition
\[
   \bigl(f^{-1}(|D_2^p|)\bigr)_{\rm red}=|D_1^p|
\]
implies, at the generic point of a component of \(D_1^p\) over it, that
\[
   f^*q_2=u q_1^m
\]
for a unit \(u\) and an integer \(m\ge1\). Applying \(d\log\) gives the displayed formula. Thus the possible ordinary ramification along \(D^p\) is absorbed by the logarithmic cotangent direction. The integer \(m\) is a local Kummer ramification index and is not, in general, the degree \(k\) of \(f^o\).

For compact root divisors, work on root charts. Let \(w_2=0\) be a target root divisor. Since \(D^c\) is not logarithmic, \(dw_2\) is an ordinary cotangent direction inside \(\Omega^1_{\cX_2}(\log D_2^p)\). The isomorphism \(df_{\log}\) forces the pull-back of this normal direction to have non-zero linear part in the corresponding source root normal direction. Hence, after choosing a source root coordinate \(w_1\), one has \(f^*w_2=u w_1\) with \(u\) a unit. Representability gives an injective homomorphism of stabilizers \(\mu_{p_{1,j}}\hookrightarrow\mu_{p_{2,i}}\), so \(p_{1,j}\mid p_{2,i}\). Passing to the coarse coordinates \(z_{1,j}=w_1^{p_{1,j}}\) and \(z_{2,i}=w_2^{p_{2,i}}\) gives the stated formula.

Finally, the support condition forbids compact root factors in the pull-back of a cusp coordinate. If the pull-back of a compact root coordinate had a cusp factor, its ordinary differential would vanish along the cusp divisor in the compact root direction, contradicting the invertibility of \(df_{\log}\). After a change of local coordinates, the cusp and compact root directions therefore have product log-root form.
\end{proof}

\begin{proposition}\label{prop:monodromy-functoriality}
Let
\[
   f:\cX_1^{\log}\longrightarrow \cX_2^{\log}
\]
be a morphism in \(\BG_n^{\rm std}\), and let
\[
   \rho_a:\pi_1^{\rm orb}(\cX_a^o)\longrightarrow \PU(n,1),\qquad a=1,2,
\]
be the monodromy representations obtained from the equality-case construction. Then, after conjugating \(\rho_1\) in \(\PU(n,1)\),
\[
   \rho_1=\rho_2\circ f_*.
\]
In particular, if \(\Gamma_a=\rho_a(\pi_1^{\rm orb}(\cX_a^o))\), then \(f\) determines a finite-index inclusion
\[
   \Gamma_1\hookrightarrow \Gamma_2.
\]
If \(f^o\) is \(k\)-sheeted, then this inclusion has index \(k\).
\end{proposition}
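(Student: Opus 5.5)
The plan is to compare the two period maps through the uniformization of Theorem~\ref{thm:standard-uniformization}, rather than comparing harmonic metrics directly. By Proposition~\ref{prop:BG-morphism-local-form}(i), \(f^o:\cX_1^o\to\cX_2^o\) is a finite étale representable orbifold covering of degree \(k\). It therefore lifts to a biholomorphism of orbifold universal covers
\[
   \tilde f:\widetilde{\cX_1^o}\longrightarrow\widetilde{\cX_2^o},
\]
which is \(f_*\)-equivariant. Moreover \(f_*:\pi_1^{\rm orb}(\cX_1^o)\to\pi_1^{\rm orb}(\cX_2^o)\) is injective, with image of index \(k\). By Theorem~\ref{thm:standard-uniformization}, both universal covers are identified with \(\bB^n\) by the period maps \(\mathcal P_1,\mathcal P_2\). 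These maps are \(\rho_a\)-equivariant biholomorphisms, and \(\rho_1,\rho_2\) are faithful.

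Consider \(\Phi:=\mathcal P_2\circ\tilde f\circ\mathcal P_1^{-1}:\bB^n\to\bB^n\). It is a biholomorphism of the ball, so it is an element \(g\in\PU(n,1)\); this uses only the fact that \(\Aut(\bB^n)=\PU(n,1)\), with no metric input. For \(\gamma\in\pi_1^{\rm orb}(\cX_1^o)\), the two equivariances give
\[
   \mathcal P_2\tilde f(\gamma\cdot x)
   =\mathcal P_2\bigl(f_*(\gamma)\cdot\tilde f(x)\bigr)
   =\rho_2(f_*\gamma)\,\mathcal P_2\tilde f(x).
\]
On the other hand, \(\mathcal P_2\tilde f(\gamma x)=g\,\mathcal P_1(\gamma x)=g\rho_1(\gamma)\mathcal P_1(x)\). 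Comparing the two expressions yields \(g\rho_1(\gamma)g^{-1}=\rho_2(f_*\gamma)\), which is the claimed identity after conjugating \(\rho_1\) by \(g\).

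It follows that \(g\Gamma_1g^{-1}=\rho_2(f_*\pi_1^{\rm orb}(\cX_1^o))\subset\Gamma_2\). Since \(\rho_2\) is faithful, the index of this subgroup equals \([\pi_1^{\rm orb}(\cX_2^o):f_*\pi_1^{\rm orb}(\cX_1^o)]=k\).

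The main point to check is that the identification of the universal covers with \(\bB^n\) really is via the period maps of the canonical Higgs bundles, so that no hidden choice intervenes. One must also check that \(\tilde f\) is a genuine biholomorphism of manifolds. Proposition~\ref{prop:cover-manifold} says that the covers are manifolds, and the finite étale covering lifts because its pull-back to a simply connected manifold is trivial.

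As a consistency check I would also verify directly that the period map of \(\cX_1\) is \(\mathcal P_2\circ\tilde f\). In the standard case \(\cE_a^{1,0}=\Omega^1_{\cX_a}(\log D_a^p)\), so the isomorphism \(df_{\log}\) gives \(f^*(\cE_2,\theta_2)\simeq(\cE_1,\theta_1)\). The pulled-back adjoint harmonic metric is tame and adapted along \(D_1^p\): by Proposition~\ref{prop:BG-morphism-local-form}(ii) the cusp model is stable under \(q_2=uq_1^m\), as in Lemma~\ref{lem:model-metric}. Uniqueness in Proposition~\ref{prop:root-stack-KH} then identifies the flat connections up to the isotypical ambiguity. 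I do not need this check for the argument above.
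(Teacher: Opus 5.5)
Your proposal is correct and follows essentially the same route as the paper. You lift \(f^o\) to a biholomorphism \(\tilde f\) of the orbifold universal covers, use Theorem~\ref{thm:standard-uniformization} to identify both covers with \(\bB^n\) via the \(\rho_a\)-equivariant period maps, observe that \(\mathcal P_2\circ\tilde f\circ\mathcal P_1^{-1}\in\Aut(\bB^n)=\PU(n,1)\) conjugates \(\rho_1\) into \(\rho_2\circ f_*\), and read off the index \(k\) from faithfulness of \(\rho_2\). The paper does not use the Higgs-bundle pull-back consistency check you sketch, and it is indeed unnecessary.
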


\begin{proof}
By Proposition~\ref{prop:BG-morphism-local-form}, the map
\[
   f^o:\cX_1^o\longrightarrow \cX_2^o
\]
is a finite \'etale orbifold covering. If it is \(k\)-sheeted, then it induces an injection
\[
   f_*:\pi_1^{\rm orb}(\cX_1^o)\hookrightarrow \pi_1^{\rm orb}(\cX_2^o).
\]
Its image has index \(k\).

For \(a=1,2\), Theorem~\ref{thm:standard-uniformization} identifies the orbifold universal cover \(\widetilde{\cX_a^o}\) with \(\bB^n\). Choose period biholomorphisms
\[
   \mathcal P_a:\widetilde{\cX_a^o}\longrightarrow \bB^n
\]
which are \(\rho_a\)-equivariant. A lift of \(f^o\) to universal covers
\[
   \widetilde f:\widetilde{\cX_1^o}\longrightarrow \widetilde{\cX_2^o}
\]
is a biholomorphism, since it is a covering map between simply connected orbifold universal covers. Therefore
\[
   A:=\mathcal P_2\circ\widetilde f\circ\mathcal P_1^{-1}
\]
belongs to \(\Aut(\bB^n)=\PU(n,1)\).

For every \(\gamma\in\pi_1^{\rm orb}(\cX_1^o)\), the lift satisfies
\[
   \widetilde f\circ\gamma=f_*(\gamma)\circ\widetilde f.
\]
Using the equivariance of the two period maps gives
\[
   A\rho_1(\gamma)A^{-1}=\rho_2(f_*(\gamma)).
\]
After conjugating \(\rho_1\) by \(A\), this is precisely
\[
   \rho_1=\rho_2\circ f_*
\]
and the finite-index inclusion \(\Gamma_1\hookrightarrow\Gamma_2\) follows. Since the representations \(\rho_a\) are faithful by Theorem~\ref{thm:standard-uniformization}, the index of the lattice inclusion is the index of \(f_*\pi_1^{\rm orb}(\cX_1^o)\) in \(\pi_1^{\rm orb}(\cX_2^o)\), namely \(k\).
\end{proof}

\begin{proposition}\label{prop:lattice-inclusion-boundary}
Let \(\Gamma_1\subset \Gamma_2\) be a finite-index inclusion representing a morphism in \(\Lat_n^{\rm reg}\). Put
\[
   U_a:=\bB^n/\Gamma_a,\qquad a=1,2,
\]
and let
\[
   \mathcal T_{\Gamma_a}
   :=
   \overline U_{\Gamma_a}^{\rm tor}
   \Bigl[\sqrt[p_{a,i}]{D_{a,i}^c}\Bigr]_{i\in I_a}
\]
be its canonical root stack, where \(D_{a,i}^c\) are the compact elliptic divisors with root orders \(p_{a,i}\). Then the finite covering
\[
   U_1\longrightarrow U_2
\]
extends uniquely to a finite representable morphism
\[
   (\mathcal T_{\Gamma_1},D_{\Gamma_1}^{\rm tor})
   \longrightarrow
   (\mathcal T_{\Gamma_2},D_{\Gamma_2}^{\rm tor})
\]
which is a local isomorphism in the log-root sense. In particular, it is a morphism in \(\BG_n^{\rm std}\).
\end{proposition}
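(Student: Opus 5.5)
We construct the extension first on a common neat level and then descend. Choose a finite-index neat subgroup \(\Gamma'\) which is normal in \(\Gamma_2\); it is then also normal in \(\Gamma_1\), after intersecting with \(\Gamma_1\) if necessary. Put \(F_a=\Gamma_a/\Gamma'\), so that \(F_1\subset F_2\). Both canonical toroidal pairs are then quotients of the same smooth toroidal compactification \((\overline U_{\Gamma'}^{\rm tor},D_{\Gamma'}^{\rm tor})\):
\[
   (\overline U_{\Gamma_a}^{\rm tor},D_{\Gamma_a}^{\rm tor})
   =(\overline U_{\Gamma'}^{\rm tor}/F_a,\,D_{\Gamma'}^{\rm tor}/F_a),
\]
as in Definition~\ref{def:canonical-orbifold-toroidal}. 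The \(F_2\)-action on \(\overline U_{\Gamma'}^{\rm tor}\) restricts to the \(F_1\)-action. Hence the identity of \(\overline U_{\Gamma'}^{\rm tor}\) induces a finite morphism of coarse pairs \(\overline U_{\Gamma_1}^{\rm tor}\to\overline U_{\Gamma_2}^{\rm tor}\) extending \(U_1\to U_2\). This morphism carries boundary to boundary with reduced preimage equal to the boundary. Uniqueness already holds on coarse spaces, because the spaces are normal and separated and \(U_1\) is dense.

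To lift the morphism to the root stacks, we work in the regular log-root charts of Definition~\ref{def:regular-lattice-category}. Near a point of \(\overline U_{\Gamma'}^{\rm tor}\), the stabilizer in \(F_a\) acts in product form with coordinates \(q=t^{m_a}\) and \(z_{a,i}=w_i^{p_{a,i}}\). The stabilizer in \(F_1\) is a subgroup of that in \(F_2\), and both are products of cyclic reflection groups acting on the same coordinates \(w_i\). Therefore the local quotient stacks \([V/\mathrm{Stab}_{F_1}]\to[V/\mathrm{Stab}_{F_2}]\) are given by the identity on the \(w\)-coordinates together with an injective stabilizer map, which makes the morphism representable. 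After discarding the cusp Kummer parts, which are absorbed logarithmically, these quotient stacks are exactly the root charts \(z_{a,i}=w_i^{p_{a,i}}\) of \(\mathcal T_{\Gamma_a}\). The key point is that the compact elliptic divisors of \(\Gamma_1\) are precisely the preimages of those of \(\Gamma_2\), on which the \(F_1\)-stabilizer is nontrivial. A component of the preimage whose \(F_1\)-stabilizer is trivial is not a root divisor for \(\Gamma_1\), and in that case the map is simply the chart \(w\mapsto w^{p_{2,i}}\) into the root stack, which is still étale on stacks. We glue the local morphisms using the universal property of root stacks, namely that a morphism to \(X[\sqrt[p]{D}]\) is a \(p\)-th root of the pulled-back divisor together with its section. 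The \(w\)-coordinates supply these roots canonically, so the gluing is canonical, and uniqueness of the stacky lift follows from uniqueness on the dense open substack \([\bB^n/\Gamma_1]\).

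Finally, we check that the morphism is a local isomorphism in the log-root sense. In the charts \(f^*w_2=w_1\) on the compact root directions, and \(f^*q_2=q_1^{m}\cdot\)unit on the cusp directions, with the other coordinates equal. Hence \(df_{\log}\) carries \(d\log q_2\) to \(m\,d\log q_1+d\log u\) and \(dw_2\) to \(dw_1\), and it is therefore an isomorphism \(f^*\Omega^1(\log D_2^p)\to\Omega^1(\log D_1^p)\). Together with finiteness, surjectivity and the reduced-support condition, this shows that the morphism lies in \(\BG_n^{\rm std}\); that the two objects themselves belong to \(\BG_n^{\rm std}\) is Theorem~\ref{thm:standard-orbifold-converse}.

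The main obstacle is the stack-theoretic bookkeeping on the cusp boundary. The Kummer stabilizers of the cusp are discarded in Definition~\ref{def:canonical-orbifold-toroidal}, and we must make sure that discarding them is compatible on both levels, so that the logarithmic morphism is still well defined and representable. The plan is to use that the cusp and compact directions do not mix, which holds by the regularity assumption preserved by morphisms in \(\Lat_n^{\rm reg}\). This allows the argument to factor locally as a product of a one-variable logarithmic Kummer map and a root-stack map, each of which is handled separately.
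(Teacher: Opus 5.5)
Your argument is essentially the paper's: a common neat subgroup normal in $\Gamma_2$ and contained in $\Gamma_1$, the induced quotient morphism $\overline U^{\rm tor}_{\Gamma'}/F_1\to\overline U^{\rm tor}_{\Gamma'}/F_2$, the cusp Kummer form $q_2=uq_1^m$, the inclusion of cyclic reflection stabilizers $\mu_{p_1}\subset\mu_{p_2}$ sharing the root coordinate $w$, and the product local form making $df_{\log}$ invertible; your use of the universal property of root stacks for gluing spells out a step the paper leaves at the level of charts. One small slip to fix: intersecting $\Gamma'$ with $\Gamma_1$ keeps normality in $\Gamma_1$ but can destroy normality in $\Gamma_2$, since $g(\Gamma'\cap\Gamma_1)g^{-1}=\Gamma'\cap g\Gamma_1g^{-1}$, so instead take the normal core in $\Gamma_2$ of a neat finite-index subgroup of $\Gamma_1$, as the paper does.
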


\begin{proof}
Choose a neat finite-index normal subgroup
\[
   \Lambda\triangleleft\Gamma_2,\qquad \Lambda\subset\Gamma_1.
\]
This is obtained by taking a sufficiently small neat subgroup of \(\Gamma_1\) and then passing to its core in \(\Gamma_2\). Put
\[
   F_a:=\Gamma_a/\Lambda,\qquad a=1,2,
\]
and let \((\overline U_\Lambda^{\rm tor},D_\Lambda^{\rm tor})\) be the smooth toroidal compactification of \(\bB^n/\Lambda\). Then \(F_1\subset F_2\), and the coarse toroidal compactifications are
\[
   \overline U_{\Gamma_a}^{\rm tor}=\overline U_\Lambda^{\rm tor}/F_a.
\]
Thus the open covering \(U_1\to U_2\) extends to the finite quotient morphism
\[
   \overline U_\Lambda^{\rm tor}/F_1
   \longrightarrow
   \overline U_\Lambda^{\rm tor}/F_2.
\]
This extension is unique, since the compactifications are normal and separated and the map is already fixed on the dense open ball quotients.

We next check the boundary form. Let \(\xi\in\partial\bB^n\) be a cusp representative. For the neat group \(\Lambda\), a toroidal neighbourhood of the corresponding boundary component is the total space of a line bundle
\[
   L_\Lambda\longrightarrow A_\Lambda
\]
over an abelian variety, with boundary the zero section \(A_\Lambda\). In a local trivialization write \(t\) for the normal coordinate. For \(a=1,2\), the cusp stabilizer
\[
   \Gamma_{a,\xi}:=\Gamma_a\cap\operatorname{Stab}(\xi)
\]
contains \(\Lambda_\xi\) with finite index. The central lattice in the unipotent radical determines the normal toroidal coordinate. Since \(\Gamma_{1,\xi}\subset\Gamma_{2,\xi}\), the corresponding central lattice for \(\Gamma_1\) is a finite-index subgroup of that for \(\Gamma_2\). If this local index is \(m\), then, after choosing compatible coordinates on the two quotients,
\[
   q_2=u q_1^m,\qquad u(0)\ne0.
\]
The unit \(u\) depends on the trivialization of the normal line bundle. It does not affect the logarithmic cotangent direction, since
\[
   d\log q_2=m\,d\log q_1+d\log u
\]
and \(d\log u\) is regular. The tangential part is a finite \'etale morphism of the abelian boundary quotients. Hence the morphism is Kummer log-\'etale along the cusp boundary, and
\[
   \bigl(f^{-1}(|D_{\Gamma_2}^{\rm tor}|)\bigr)_{\rm red}
   =
   |D_{\Gamma_1}^{\rm tor}|.
\]
The integer \(m\) is a local central-lattice index; if several source cusp components lie over the same target component, the global degree is the sum of the corresponding normal indices times the tangential boundary degrees.

Now consider a compact elliptic divisor. By the regular log-root condition, on the common neat toroidal cover \(\overline U_\Lambda^{\rm tor}\) one may choose a normal coordinate \(w\) to the mirror such that the stabilizer in \(F_a\) is a cyclic reflection group of order \(p_a\),
\[
   w\longmapsto \zeta w,\qquad \zeta\in\mu_{p_a}.
\]
Since \(F_1\subset F_2\), the source stabilizer injects into the target stabilizer, so \(p_1\mid p_2\). If \(z_a\) is the corresponding coarse coordinate on \(\overline U_\Lambda^{\rm tor}/F_a\), then
\[
   z_a=w^{p_a}.
\]
Therefore the quotient morphism has local coarse form
\[
   z_2=w^{p_2}=(w^{p_1})^{p_2/p_1}=z_1^{p_2/p_1},
\]
up to multiplication by a unit. On the root stacks, both sides use \(w\) as the normal root coordinate, so the morphism is \'etale in root coordinates and representable by the injectivity of the stabilizer map.

Finally, at points lying over the intersection of the toroidal boundary and compact mirrors, the regular log-root condition gives product coordinates on \(\overline U_\Lambda^{\rm tor}\),
\[
   (t,w_1,\ldots,w_s,y),
\]
where \(t=0\) is the toroidal boundary and \(w_i=0\) are the compact mirrors. On the two quotients the local coordinates have the form
\[
   q_a=t^{m_a},\qquad z_{a,i}=w_i^{p_{a,i}},
\]
together with \'etale tangential coordinates. Hence the quotient map has product local form
\[
   q_2=u q_1^m,\qquad
   z_{2,i}=v_i z_{1,i}^{p_{2,i}/p_{1,i}},\qquad
   y_2=\varphi(y_1),
\]
where \(u\) and \(v_i\) are units and \(\varphi\) is \'etale. Thus cusp logarithmic directions and compact root directions do not mix. The displayed local forms show exactly that the resulting morphism is a local isomorphism in the log-root sense.
\end{proof}

\begin{proposition}\label{prop:quotient-functor}
The compactified quotient construction defines a functor
\[
   Q_{\rm tor}:\Lat_n^{\rm reg}\longrightarrow \BG_n^{\rm std}.
\]
\end{proposition}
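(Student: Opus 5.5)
To prove that \(Q_{\rm tor}\) is a functor, I would check three things: that it is well defined on objects, that it is well defined on morphisms, and that it respects identities and composition.

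\emph{Objects.} For \(\Gamma\in\Lat_n^{\rm reg}\), set
\[
Q_{\rm tor}(\Gamma):=(\mathcal T_\Gamma,D_\Gamma^{\rm tor})
\]
as in Definition~\ref{def:canonical-orbifold-toroidal}. This is independent of the auxiliary neat normal subgroup \(\Gamma'\), because any two choices can be compared through a common neat normal subgroup. It also depends only on the conjugacy class of \(\Gamma\): conjugation by \(g\in\PU(n,1)\) acts on \(\bB^n\) and induces an isomorphism of the toroidal compactifications, and this isomorphism carries the root structure along with it.

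Next I would verify that \(Q_{\rm tor}(\Gamma)\) is an object of \(\BG_n^{\rm std}\).
\begin{itemize}
\item The regular log-root condition gives local charts of the form \(q=t^m\), \(z_i=w_i^{p_i}\). From these, the coarse space \(\overline U^{\rm tor}_\Gamma\) is smooth and \(D^p+\sum_i D_i^c\) is simple normal crossing. Here I expect one extra step: I would pass to a further finite cover, or argue that the mirrors are globally smooth irreducible components, to obtain \emph{simple} rather than merely normal crossings. I would take this from the regular local form together with the fact that the stabilizers are products of distinct cyclic reflection groups.
\item Projectivity of the coarse space follows from Baily--Borel together with the quotient of a projective toroidal compactification by a finite group.
\item The standard weights \(1-1/p_i\) are built into the construction.
\item Polystability and the parabolic Bogomolov--Gieseker equality for some ample \(L\) follow from Theorem~\ref{thm:standard-orbifold-converse}. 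That theorem gives polystability with respect to every \(\Delta\)-admissible big and nef class. To get an ample class, I would perturb \(c_1(K_X+\Delta)\) to an ample class which is still admissible: add \(\epsilon\) times a Kähler form, which preserves the mixed growth bound. The equality holds as a cohomology class, so it survives pairing with \(c_1(L)^{n-2}\).
\end{itemize}
I expect this ample-polarization step to be the main obstacle. The key point to verify is that \(T+\epsilon\omega_0\) is still dominated by \(\omega_{\rm mix}\), which holds since \(\omega_0\le C\omega_{\rm mix}\). With that, Proposition~\ref{prop:converse}(3) applies to \(\alpha=c_1(K_X+\Delta)+\epsilon[\omega_0]\), and this class is ample for rational \(\epsilon\) small, because a big and nef class plus a Kähler class is Kähler.

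\emph{Morphisms.} A morphism \(\Gamma_1\subset g\Gamma_2g^{-1}\) is first replaced, via the conjugation isomorphism above, by an honest inclusion \(\Gamma_1\subset\Gamma_2\). Then Proposition~\ref{prop:lattice-inclusion-boundary} supplies the unique extension to a finite representable morphism of log-root pairs which is a local isomorphism in the log-root sense. The reduced-preimage condition on \(D^p\) was also verified there. Hence this is a morphism of \(\BG_n^{\rm std}\).

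The assignment is independent of the choice of \(g\) up to simultaneous conjugacy. The reason is that two choices differ by an element normalizing the data, and the induced maps agree on the dense open ball quotient. By the uniqueness clause of Proposition~\ref{prop:lattice-inclusion-boundary} (separatedness and normality), they therefore agree globally.

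\emph{Identities and composition.} The same uniqueness argument handles these. The identity inclusion extends to the identity. For a composite \(\Gamma_1\subset\Gamma_2\subset\Gamma_3\), both the extension of the composite and the composite of the extensions restrict to the same covering \(U_1\to U_3\) on the dense open part. Uniqueness of extensions of representable morphisms between separated Deligne--Mumford stacks whose open parts are dense then forces them to coincide. To make this rigorous on stacks rather than coarse spaces, I would check the equality on the common neat cover \(\overline U^{\rm tor}_\Lambda\), where all the maps are quotient maps by the groups \(F_1\subset F_2\subset F_3\).
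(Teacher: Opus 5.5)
Your argument is essentially the paper's. Objects land in \(\BG_n^{\rm std}\) by Theorem~\ref{thm:standard-orbifold-converse}, morphisms by Proposition~\ref{prop:lattice-inclusion-boundary}, and identities and composition follow from uniqueness of the extension from the dense open ball quotient, which the paper simply asserts. Your treatment of the ample polarization, which the paper passes over, is correct but can be shortened. Any K\"ahler form \(\omega_0\in c_1(L)\) already satisfies \(\omega_0\le C\,\omega_{\rm mix}\) near \(|D|\), so every ample class is \(\Delta\)-admissible with \(T=\omega_0\), and no perturbation of \(c_1(K_X+\Delta)\) is needed.

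A few of your side claims need correcting. None affects the main line, and the paper's proof does not address them either.

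Neither of your routes to \emph{simple} normal crossings works. A finite cover changes the object. The product local form \(z_i=w_i^{p_i}\) is purely local, so it cannot exclude a mirror meeting a \(\Gamma\)-translate of itself; such a crossing can be locally of the allowed type but makes a single \(D_i^c\) self-intersect. Simplicity has to be read as part of the regular log-root hypothesis.

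Your justification for independence of the choice of \(g\) is also wrong. Let \(h=gn\) with \(n\) in the normalizer of \(\Gamma_2\) but \(n\notin\Gamma_2\). Then \(h\Gamma_2h^{-1}=g\Gamma_2g^{-1}\), yet the two induced maps \(\bB^n/\Gamma_1\to\bB^n/\Gamma_2\) differ by the non-trivial automorphism of \(\bB^n/\Gamma_2\) induced by \(n\). So they do not agree on the open part, and the coset \(g\Gamma_2\) must be recorded as part of the data of a morphism.

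Finally, Baily--Borel covers only arithmetic \(\Gamma\). For non-arithmetic lattices, projectivity of the toroidal compactification comes from the work of Siu--Yau and Mok.
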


\begin{proof}
Let \(\Gamma\) be an object of \(\Lat_n^{\rm reg}\). By Definition~\ref{def:canonical-orbifold-toroidal} and the regular log-root condition, the quotient \([\bB^n/\Gamma]\) has a canonical orbifold toroidal compactification
\[
   (\mathcal T_\Gamma,D_\Gamma^{\rm tor}).
\]
Its coarse space is smooth, the cusp boundary is ordinary logarithmic, and the compact elliptic divisors carry standard root orders \(p_i\). Theorem~\ref{thm:standard-orbifold-converse} applies to this standard unramified orbifold complex-hyperbolic structure and shows that the canonical parabolic Higgs bundle is polystable and satisfies the parabolic Bogomolov--Gieseker equality. Thus the compactification is an object of \(\BG_n^{\rm std}\).

A morphism \(\Gamma_1\to\Gamma_2\) is represented, after conjugation, by a finite-index inclusion \(\Gamma_1\subset\Gamma_2\). The quotient map
\[
   \bB^n/\Gamma_1\longrightarrow \bB^n/\Gamma_2
\]
is finite \'etale on the open orbifold quotients. By Proposition~\ref{prop:lattice-inclusion-boundary}, it extends to a finite representable morphism of the canonical compactifications which is a local isomorphism in the log-root sense. Hence it gives a morphism in \(\BG_n^{\rm std}\), and the construction is compatible with composition.
\end{proof}

\begin{proposition}\label{prop:monodromy-functor}
The monodromy construction defines a functor
\[
   U_{\rm BG}:\BG_n^{\rm std}\longrightarrow \Lat_n^{\rm reg},\qquad
   \cX^{\log}\longmapsto \rho_{\cX}\bigl(\pi_1^{\rm orb}(\cX^o)\bigr).
\]
\end{proposition}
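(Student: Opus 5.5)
To prove Proposition~\ref{prop:monodromy-functor} we check three things: objects go to objects of \(\Lat_n^{\rm reg}\), morphisms go to morphisms, and composition and identities are respected.

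\emph{Objects.} Let \(\cX^{\log}\) be an object of \(\BG_n^{\rm std}\). Theorem~\ref{thm:standard-uniformization} applies directly. It shows that \(\rho_{\cX}\) is faithful and that \(\Gamma:=\rho_{\cX}(\pi_1^{\rm orb}(\cX^o))\) is a finite-volume lattice. It also gives \((\cX,D^p)\simeq(\mathcal T_\Gamma,D_\Gamma^{\rm tor})\) in the sense of Definition~\ref{def:canonical-orbifold-toroidal}. Two points need care.

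First, well-definedness up to conjugacy. The flat \(\PU(n,1)\)-variation is determined by the adapted harmonic metric. By the uniqueness statement in Proposition~\ref{prop:root-stack-KH}, together with the Hodge normalization in Lemma~\ref{lem:adjoint-hodge-tensor}, the remaining ambiguity is a base-point change or a constant gauge. Either one changes \(\rho_{\cX}\) only by \(\PU(n,1)\)-conjugation.

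Second, the regular log-root property of Definition~\ref{def:regular-lattice-category}. Choose a neat normal \(\Gamma'\triangleleft\Gamma\). The proof of Proposition~\ref{prop:standard-toroidal-identification} identifies the normalization \(Y'\) of \(X\) in \(\C(U')\) with \(\overline U^{\rm tor}_{\Gamma'}\). Its local description (cyclic base changes \(z_\alpha=t_\alpha^{m_\alpha}\) along \(D^p\), root charts \(x_\beta=w_\beta^{p_\beta}\) along \(D^c\), and a finite étale normalization over these) shows that \(F=\Gamma/\Gamma'\) acts in the product form \(q=t^m\), \(z_i=w_i^{p_i}\), \(y=y\). This uses that \(D^p+D^c\) is simple normal crossing and that \(D^p\) is smooth (Proposition~\ref{prop:Dp-smooth}). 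Proposition~\ref{prop:cover-manifold} shows that the compact stabilizers are exactly the products \(\prod\mu_{p_i}\) acting by reflections.

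\emph{Morphisms.} Let \(f\) be a morphism of \(\BG_n^{\rm std}\). Proposition~\ref{prop:monodromy-functoriality} gives \(\rho_1=\rho_2\circ f_*\) after conjugation, and hence a finite-index inclusion \(\Gamma_1\subset A^{-1}\Gamma_2A\). Set \(U_{\rm BG}(f)\) to be this inclusion. The element \(A=\mathcal P_2\circ\widetilde f\circ\mathcal P_1^{-1}\) depends on the choice of lift \(\widetilde f\) only up to the deck group. So \(A\) is well defined up to \(\Gamma_2\) on one side and simultaneous conjugation, which is exactly the equivalence relation on morphisms in \(\Lat_n^{\rm reg}\). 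We must also check that the inclusion preserves the log-root structures on the toroidal compactifications. Via the identifications \(\cX_a\simeq\mathcal T_{\Gamma_a}\), the induced map \(\overline U^{\rm tor}_{\Gamma_1}\to\overline U^{\rm tor}_{\Gamma_2}\) agrees with \(f\) on the dense open part. By normality and separatedness it agrees with \(f\) everywhere. The local forms in Proposition~\ref{prop:BG-morphism-local-form}(ii)--(iv) are then exactly the required preservation.

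\emph{Composition and identities.} For composable \(f,g\), lifts compose: \(\widetilde{g\circ f}=\widetilde g\circ\widetilde f\). Hence the conjugating elements multiply, \(A_{g\circ f}=A_g A_f\) modulo deck transformations, so composition is respected. For the identity morphism, the lift can be taken to be the identity, giving \(A=1\), so identities go to identities.

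\emph{Main obstacle.} I expect the hardest step to be verifying the regular log-root condition for \(\Gamma\). The difficulty is making sure the \(F\)-action on \(\overline U^{\rm tor}_{\Gamma'}\) does not mix the cusp Kummer directions with the compact root directions at points of \(D^p\cap D^c\). I would handle this by extracting the product chart from the Abhyankar-type local structure of \(\nu_X\) in the proof of Proposition~\ref{prop:standard-toroidal-identification}, applied to the product root/cyclic cover of the polydisc.
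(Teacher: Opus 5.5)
Your outline follows the paper's proof, with more detail filled in. Objects come from Theorem~\ref{thm:standard-uniformization}, the regular log-root property from the local log-root structure, morphisms from Proposition~\ref{prop:monodromy-functoriality}, and composition from functoriality of $f_*$; your bookkeeping $A_{g\circ f}=A_gA_f$ is correct. For independence of $\Gamma_\cX$ up to conjugacy, the paper uses a simpler argument than harmonic-metric uniqueness. It views $\Gamma_\cX$ as the deck group transported by a period biholomorphism $\widetilde{\cX^o}\simeq\bB^n$. Two such biholomorphisms differ by an element of $\Aut(\bB^n)=\PU(n,1)$, so $\Gamma_\cX$ changes only by conjugation.

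The paper asserts the regular log-root property in a single sentence. The route you propose for it, which is the step you flag as the main obstacle, does not go through as stated.

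\emph{What Abhyankar gives.} The local picture in the proof of Proposition~\ref{prop:standard-toroidal-identification} only says the following. Near a point $y_0\in Y'$ over $D^p\cap D^c$, the germ of $Y'$ is the germ of $\widetilde V/H$. Here $\widetilde V$ is the polydisc with coordinates $(t,w,v)$, with $z=t^m$ and $x_\beta=w_\beta^{p_\beta}$, and $H$ is a subgroup of $G_{\rm loc}=\mu_m\times\prod_\beta\mu_{p_\beta}$. Then $F_{y_0}\cong G_{\rm loc}/H$.

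\emph{Why this is not enough.} Representability and \'etaleness of $U'\to\cX^o$ only force $H$ to act freely off $\{t=0\}$, i.e.\ $H\cap(1\times\prod_\beta\mu_{p_\beta})=1$. This still allows diagonal subgroups such as $H=\{(\zeta,\zeta)\}\subset\mu_p\times\mu_p$. For that $H$, $F_{y_0}\cong\mu_p$ acts on a germ of type $\tfrac1p(1,1)$ (times a smooth factor). That germ is singular, so there is no product chart.

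\emph{The missing input.} You need that $Y'\simeq\overline U^{\rm tor}_{\Gamma'}$ is \emph{smooth}, which holds because $\Gamma'$ is neat. By Chevalley--Shephard--Todd, $H$ is then generated by pseudo-reflections. In a diagonal group these are the elements with exactly one non-trivial entry, and the freeness condition puts them in $\mu_m\times1$. Hence $H=\mu_k\times1$ for some $k\mid m$. Then $F_{y_0}\cong\mu_{m/k}\times\prod_\beta\mu_{p_\beta}$ acts diagonally in the coordinates $(t^k,w,v)$, with $z=(t^k)^{m/k}$ in the cusp direction and $x_\beta=w_\beta^{p_\beta}$ in the root directions. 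This is exactly the required product form; Proposition~\ref{prop:Dp-smooth} guarantees there is only one cusp variable.

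With this step added, your argument is complete.
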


\begin{proof}
For an object \(\cX^{\log}\in\BG_n^{\rm std}\), Theorem~\ref{thm:standard-uniformization} produces a finite-volume lattice
\[
   \Gamma_\cX=\rho_\cX\bigl(\pi_1^{\rm orb}(\cX^o)\bigr)\subset\PU(n,1)
\]
and identifies \((\cX,D^p)\) with the canonical orbifold toroidal compactification of \([\bB^n/\Gamma_\cX]\). Equivalently, \(\Gamma_\cX\) is the orbifold deck group transported to \(\Aut(\bB^n)=\PU(n,1)\) by a period biholomorphism; changing the biholomorphism conjugates \(\Gamma_\cX\). The local root-stack structure of \(\cX\), together with the ordinary logarithmic structure along \(D^p\), shows that \(\Gamma_\cX\) has regular log-root type. Proposition~\ref{prop:monodromy-functoriality} gives the action on morphisms, and compatibility with composition follows from functoriality of the induced maps on orbifold fundamental groups.
\end{proof}

\begin{proof}[Proof of Theorem~\ref{thm:categorical}]
The functors \(Q_{\rm tor}\) and \(U_{\rm BG}\) are defined in Propositions~\ref{prop:quotient-functor} and~\ref{prop:monodromy-functor}. If \(\Gamma\in\Lat_n^{\rm reg}\), then \(Q_{\rm tor}(\Gamma)\) has open orbifold part \([\bB^n/\Gamma]\), and the monodromy representation obtained from the canonical parabolic Higgs bundle is the standard inclusion of \(\Gamma\) into \(\PU(n,1)\), up to conjugation. Hence
\[
   U_{\rm BG}(Q_{\rm tor}(\Gamma))\simeq \Gamma.
\]

Conversely, if \(\cX^{\log}\in\BG_n^{\rm std}\), Theorem~\ref{thm:standard-uniformization} identifies \((\cX,D^p)\) with the canonical orbifold toroidal compactification of \([\bB^n/\Gamma_\cX]\). Therefore
\[
   Q_{\rm tor}(U_{\rm BG}(\cX^{\log}))\simeq \cX^{\log}.
\]

It remains to identify morphisms. A finite-index inclusion \(\Gamma_1\subset g\Gamma_2g^{-1}\) gives a finite orbifold covering of the open ball quotients. After conjugating, Proposition~\ref{prop:lattice-inclusion-boundary} shows that this covering extends uniquely to a finite representable morphism of the canonical compactifications which is a local isomorphism in the log-root sense. This is the morphism \(Q_{\rm tor}\) assigns to the inclusion. Conversely, any morphism in \(\BG_n^{\rm std}\) gives, by Proposition~\ref{prop:monodromy-functoriality}, a finite-index inclusion of the associated monodromy lattices. These two constructions are inverse to one another, because both are determined by the induced map on the common universal cover \(\bB^n\). Thus the sets of morphisms are identified, and the two functors are quasi-inverse equivalences of categories.
\end{proof}

\section*{Acknowledgements}
The authors thank Shiyu Zhang for helpful discussions. They also thank Xiaojin Lin and Mao Sheng for reading the manuscript and for suggestions on the categorical equivalence.

\printbibliography

\end{document}